\newcommand{\T}{{\cal T}}
\newcommand{\Real}{\mathbb R}
\newcommand{\set}[1]{\left\{#1\right\}}
\newcommand{\To}{\longrightarrow}
\newcommand{\p}{\pi^{-1}(TM)}
\def\Section#1{\vspace{30truept}\addtocounter{section}{1}\setcounter{thm}{0}\setcounter{equation}{0}
{\noindent\Large\bf\arabic{section}.~~#1}\par \vspace{12pt}}
\newtheorem{thm}{Theorem}[section]
\newtheorem{cor}[thm]{Corollary}
\newtheorem{lem}[thm]{Lemma}
\newtheorem{prop}[thm]{Proposition}
\newtheorem{defn}[thm]{Definition}
\newtheorem{rem}[thm]{Remark}
\numberwithin{equation}{section}
\begin{document}
\title{\bf  INTRINSIC THEORY\\ OF PROJECTIVE CHANGES IN FINSLER GEOMETRY}

\author{\bf Nabil L. Youssef$^{\dagger}$, S. H. Abed$^{\ddagger}$ and A.
Soleiman$^{\flat}$}
\date{}

\maketitle                     
\vspace{-1.15cm}
\begin{center}
{$^{\dag}$Department of Mathematics, Faculty of Science,\\ Cairo
University, Giza, Egypt}
\end{center}
\vspace{-0.8cm}
\begin{center}
nlyoussef2003@yahoo.fr,\, nyoussef@frcu.eun.eg
\end{center}
\vspace{-0.4cm}

\begin{center}
{$^{\ddag}$Department of Mathematics, Faculty of Science,\\ Cairo
University, Giza, Egypt}
\end{center}
\vspace{-0.8cm}
\begin{center}
sabed52@yahoo.fr,\, sabed@frcu.eun.eg
\end{center}
\vspace{-0.4cm}

\begin{center}
{$^{\flat}$Department of Mathematics, Faculty of Science,\\ Benha
University, Benha,
 Egypt}
\end{center}
\vspace{-0.8cm}

\begin{center}
soleiman@mailer.eun.eg
\end{center}
\smallskip
\vspace{1cm} \maketitle

\smallskip
\bigskip
\noindent{\bf Abstract.}
  The aim of the present paper is to
provide an \emph{intrinsic} investigation  of projective changes  in
  Finlser geometry, following the pullback formalism. Various known
  local results are generalized and  other new intrinsic results
are obtained.
  Nontrivial  characterizations of projective changes are given.
  The fundamental projectively
invariant tensors, namely, the projective deviation tensor,  the
Weyl torsion tensor, the Weyl curvature tensor and the Douglas
tensor are investigated. The properties of these tensors and their
interrelationships are obtained.   Projective connections and
projectively flat manifolds are characterized. The present work is
entirely intrinsic (free from local coordinates).

\bigskip
\medskip\noindent{\bf Keywords.\/}\,  Pullback formalism, Projective change,
 Canonical spray, Barthel connection,  Berwald connection, Weyl curvature tensor,
  Weyl torsion tensor,  Douglas tensor, Projective connection, Projectively flat manifold.

\bigskip
\medskip\noindent{\bf 2000 AMS Subject Classification.\/} 53C60,
53B40
\bigskip

\newpage
\vspace{30truept}\centerline{\Large\bf{Introduction}}\vspace{12pt}
\par
\par
The most well-known and widely used approaches to \emph{global}
Finsler geometry are the Klein-Grifone (KG-) approach (\cite{r21},
\cite{r22}, \cite{r27}) and the pullback (PB-) approach (\cite{r58},
\cite{r48}, \cite{r92}). The universe of the first approach is the
tangent bundle of $T M$, whereas the universe of the second is the
pullback of the tangent bundle $TM$ by $\pi: \T M\longrightarrow M$.
Each of the two approaches has its own geometry which differs
significantly from the geometry of the other (in spite of the
existence of some links between them).
\par
The theory of projective changes in Riemannian geometry has been
deeply studied (\emph{locally and intrinsically}) by many authors.
With regard to Finsler geometry, a complete \emph{local} theory of
projective changes has been established (\cite{r104}, \cite{r105},
\cite{r106},~\cite{r42},~\cite{r54},$\cdots$). Moreover, an
intrinsic theory of projective changes (resp. semi-projective
changes) has been investigated in~\cite{r59}, \cite{r103}
(resp.~\cite{r55}) following the KG-approach. To the best of our
knowledge, there is no complete intrinsic theory in the PB-approach.
\par
In this paper we present \emph{an intrinsic} theory of projective
changes in Finsler geometry following \emph{the pullback approach}.
Various known  local results are generalized and other new intrinsic
results are obtained.
\par
The  paper consists of four parts preceded by an introductory
section $(\S 1)$, which provides a brief account of the basic
definitions and concepts necessary for this work.
\par
In the first part $(\S 2)$, the projective change of Barthel
 and  Berwald connections, as well as their curvatuer
tensors, are investigated. Some characterizations of projective
changes are established.  The results obtained in this section play
a key role in obtaining other  results in the next sections.
\par
The second part $(\S 3)$ is devoted to an  investigation of the
fundamental projectively invariant  tensors under a projective
change, namely, the projective deviation tensor,  the Weyl torsion
tensor and the Weyl curvature tensor. The properties of these
tensors and their interrelationships are studied.
\par
The third  part $(\S 4)$ provides a characterization of a linear
connection which is invariant under projective changes (the
projective connection).   Moreover, another fundamental projectively
invariant tensor (the Douglas tensor) is investigated, the
properties of which are discussed. Finally,  the Douglas tensor and
the projective connection are related.
\par
In the fourth  and last part $(\S 5)$, the projective change of some
important special Finsler manifolds, namely, the Berwald, Douglas
and  projectively flat Finsler manifolds are investigated.
Moreover, the relationship between  projectively flat Finsler
manifolds and Douglas tensor (Weyl tensor) is obtained.
\par
It should finally be noted that the present work is entirely
intrinsic (free from local coordinates).

\Section{Notation and Preliminaries}
In this section, we give a brief account of the basic concepts
 of the pullback approach to intrinsic Finsler geometry necessary for this work. For more
 details, we refer to \cite{r58} and~\,\cite{r48}.
 We make the
assumption that the geometric objects we consider are of class
$C^{\infty}$.  The
following notation will be used throughout this paper:\\
 $M$: a real paracompact differentiable manifold of finite dimension $n$ and of
class $C^{\infty}$,\\
 $\mathfrak{F}(M)$: the $\Real$-algebra of differentiable functions
on $M$,\\
$\mathfrak{X}(M)$: the $\mathfrak{F}(M)$-module of vector fields
on $M$,\\
$\pi_{M}:TM\longrightarrow M$: the tangent bundle of $M$,\\
$\pi: \T M\longrightarrow M$: the subbundle of nonzero vectors
tangent to $M$,\\
$V(TM)$: the vertical subbundle of the bundle $TTM$,\\
 $P:\pi^{-1}(TM)\longrightarrow \T M$ : the pullback of the
tangent bundle $TM$ by $\pi$,\\
 $\mathfrak{X}(\pi (M))$: the $\mathfrak{F}(\T M)$-module of
differentiable sections of  $\pi^{-1}(T M)$,\\
$ i_{X}$ : the interior product with respect to  $X
\in\mathfrak{X}(M)$,\\
$df$ : the exterior derivative  of $f\in \mathfrak{F}(M)$,\\
$ d_{L}:=[i_{L},d]$, $i_{L}$ being the interior derivative with
respect to a vector form $L$.
\par Elements  of  $\mathfrak{X}(\pi (M))$ will be called
$\pi$-vector fields and will be denoted by barred letters
$\overline{X} $. Tensor fields on $\pi^{-1}(TM)$ will be called
$\pi$-tensor fields. The fundamental $\pi$-vector field is the
$\pi$-vector field $\overline{\eta}$ defined by
$\overline{\eta}(u)=(u,u)$ for all $u\in \T M$. We have the
following short exact sequence of vector bundles:\vspace{-0.1cm}
$$0\longrightarrow
 \pi^{-1}(TM)\stackrel{\gamma}\longrightarrow T(\T M)\stackrel{\rho}\longrightarrow
\pi^{-1}(TM)\longrightarrow 0 ,\vspace{-0.1cm}$$
 the bundle morphisms  $\rho$ and $\gamma$ being defined in \cite{r92}. The vector $1$-form
  $J:=\gamma\circ\rho$ is called the natural almost
tangent structure of $T M$. The vertical vector field
$\mathcal{C}:=\gamma\circ\overline{\eta} $ on $TM$ is called  the
canonical (Liouville) vector field.
\par
Let $D$ be  a linear connection (or simply a connection) on the
pullback bundle $\pi^{-1}(TM)$.
 The connection (or the deflection) map associated with $D$ is defined by \vspace{-0.1cm}
$$K:T \T M\longrightarrow \pi^{-1}(TM):X\longmapsto D_X \overline{\eta}
.\vspace{-0.1cm}$$ A tangent vector $X\in T_u (\T M)$ at $u\in \T M$
is horizontal if $K(X)=0$ . The vector space $H_u (\T M)= \{ X \in
T_u (\T M) : K(X)=0 \}$ is called the horizontal space at $u$.
   The connection $D$ is said to be regular if
\begin{equation*}\label{direct sum}
T_u (\T M)=V_u (\T M)\oplus H_u (\T M) \qquad \forall u\in \T M .
\end{equation*}
   Let $\beta:=(\rho |_{H(\T M)})^{-1}$, called the horizontal map of the connection
$D$,  then \vspace{-0.1cm}
   \begin{align*}\label{fh1}
    \rho\circ\beta = id_{\pi^{-1} (TM)}, \quad  \quad
       \beta\circ\rho =   id_{H(\T M)} & {\,\, \text{on}\,\,   H(\T M)}.\vspace{-0.2cm}
\end{align*}

For a regular connection $D$, the horizontal and vertical covariant
derivatives $\stackrel{1}D$ and $\stackrel{2}D$ are defined, for a
vector (1)$\pi$-form $A$, for example, by \vspace{-0.2cm}
 $$ (\stackrel{1}D A)(\overline{X}, \overline{Y}):=
  (D_{\beta \overline{ X}} A)( \overline{Y}), \quad
  (\stackrel{2}D A)( \overline{X},  \overline{Y}):= (D_{\gamma \overline{X}} A)(  \overline{Y}).$$
\par
 The (classical)  torsion tensor $\textbf{T}$  of the connection
$D$ is given by
$$\textbf{T}(X,Y)=D_X \rho Y-D_Y\rho X -\rho [X,Y] \quad
\forall\,X,Y\in \mathfrak{X} (\T M),$$  from which the horizontal
((h)h-) and mixed ((h)hv-) torsion tensors are defined respectively
by \vspace{-0.2cm}
$$Q (\overline{X},\overline{Y}):=\textbf{T}(\beta \overline{X}\beta \overline{Y}),
\, \,\,\,\, T(\overline{X},\overline{Y}):=\textbf{T}(\gamma
\overline{X},\beta \overline{Y}) \quad \forall \,
\overline{X},\overline{Y}\in\mathfrak{X} (\pi (M)).\vspace{-0.2cm}$$
\par
The (classical) curvature tensor  $\textbf{K}$ of the connection $D$
is given by
 $$ \textbf{K}(X,Y)\rho Z=-D_X D_Y \rho Z+D_Y D_X \rho Z+D_{[X,Y]}\rho Z
  \quad \forall\, X,Y, Z \in \mathfrak{X} (\T M),$$
from which the horizontal (h-), mixed (hv-) and vertical (v-)
curvature tensors are defined respectively by
$$R(\overline{X},\overline{Y})\overline{Z}:=\textbf{K}(\beta
\overline{X}\beta \overline{Y})\overline{Z},\quad
P(\overline{X},\overline{Y})\overline{Z}:=\textbf{K}(\beta
\overline{X},\gamma \overline{Y})\overline{Z},\quad
S(\overline{X},\overline{Y})\overline{Z}:=\textbf{K}(\gamma
\overline{X},\gamma \overline{Y})\overline{Z}.$$ The
 (v)h-, (v)hv- and (v)v-torsion tensors  are defined respectively by
$$\widehat{R}(\overline{X},\overline{Y}):={R}(\overline{X},\overline{Y})\overline{\eta},\quad
\widehat{P}(\overline{X},\overline{Y}):={P}(\overline{X},\overline{Y})\overline{\eta},\quad
\widehat{S}(\overline{X},\overline{Y}):={S}(\overline{X},\overline{Y})\overline{\eta}.$$
\par
For a Finsler manifold $(M,L)$, there are canonically associated two
fundamental linear connections, namely, the Cartan connection
$\nabla$ and the Berwald connection $D^{\circ} $. An explicit
expression for the Berwald connection is given by\vspace{-0.2cm}
\begin{thm}\emph{\cite{r92}}\label{pp.3} The Berwald connection $D^{\circ}$ is uniquely determined by
\begin{description}
  \item[(a)] $D^{\circ}_{\gamma \overline{X}} \overline{Y}
  =\rho [\gamma \overline{X}, \beta \overline{Y}]$,\quad \qquad
  \textbf{\emph{(}b\emph{)}} $D^{\circ}_{\beta \overline{X}} \overline{Y}= K [\beta \overline{X}, \gamma \overline{Y}]$.
\end{description}
\end{thm}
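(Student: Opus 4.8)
The plan is to read (a) and (b) as the two equations that \emph{define} $D^{\circ}$, with the understanding that $\beta$ — equivalently the horizontal distribution $H(\T M)=\ker K$ — is the one attached to the canonically associated Barthel (nonlinear) connection of $(M,L)$, which is fixed independently of $D^{\circ}$. Granting this, I would argue uniqueness and existence separately.

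For \emph{uniqueness}, observe that a regular connection $D$ on $\pi^{-1}(TM)$ is completely determined by the two operators $(\overline X,\overline Y)\mapsto D_{\gamma\overline X}\overline Y$ and $(\overline X,\overline Y)\mapsto D_{\beta\overline X}\overline Y$: from $T_u(\T M)=V_u(\T M)\oplus H_u(\T M)$ every $X\in\mathfrak X(\T M)$ splits as $X=\beta\rho X+\gamma K X$, so $\mathfrak F(\T M)$-linearity of $D$ in the first argument recovers $D_X\overline Y$ from these two operators. Since (a) and (b) prescribe exactly these operators, at most one connection can satisfy both.

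For \emph{existence}, define $D^{\circ}$ on vertical directions by the formula in (a), on horizontal directions by the formula in (b) (with $K$ taken to be the deflection map of the Barthel connection), and extend $\mathfrak F(\T M)$-linearly in the first argument using the above splitting; then verify the connection axioms. Additivity and $\Real$-bilinearity are immediate, while the Leibniz rule $D^{\circ}_X(f\overline Y)=(Xf)\overline Y+fD^{\circ}_X\overline Y$ and the $\mathfrak F(\T M)$-linearity in $X$ follow, after expanding the Lie brackets in (a) and (b), from the four structural identities $\rho\circ\gamma=0$, $\rho\circ\beta=\mathrm{id}$, $K\circ\gamma=\mathrm{id}$, $K\circ\beta=0$, which kill precisely the inhomogeneous bracket terms. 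One then checks that $D^{\circ}$ is regular and that its own deflection map coincides with the $K$ inserted in (b): this reduces to $D^{\circ}_{\gamma\overline X}\overline\eta=\rho[\gamma\overline X,\beta\overline\eta]=\overline X$, the last equality being the standard bracket identity $\rho[\gamma\overline X,S]=\overline X$ for the spray $S=\beta\overline\eta$, together with the fact that $D^{\circ}_{\beta\overline X}\overline\eta=K[\beta\overline X,\mathcal C]$ lies in the Barthel horizontal distribution. Finally, one identifies $D^{\circ}$ with the Berwald connection of \cite{r92}, the two having the same horizontal distribution and satisfying the same pair of equations (a)--(b) (equivalently, in any local chart both carry the usual Berwald Christoffel symbols).

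I expect the one genuinely delicate point to be resolving the apparent circularity of (b): confirming that the deflection map of the connection just constructed is indeed the $K$ appearing on the right-hand side of (b), i.e. that $D^{\circ}$ is regular with horizontal distribution equal to the Barthel one. Once that consistency is established, everything else is a routine bracket manipulation governed by $\rho\gamma=0$, $\rho\beta=\mathrm{id}$, $K\gamma=\mathrm{id}$, $K\beta=0$ and the homogeneity of the canonical spray.
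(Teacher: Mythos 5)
The paper itself contains no proof of this statement: Theorem \ref{pp.3} is imported verbatim from \cite{r92}, so there is no in-paper argument to compare against, and your proposal has to be judged on its own merits. On those merits the outline is essentially sound and is the natural intrinsic route. Reading $\beta$ and $K$ as the horizontal and connection maps attached to the Barthel connection resolves the apparent circularity correctly; uniqueness then follows, as you say, from the splitting $X=\beta\rho X+\gamma KX$ and $\mathfrak{F}(\T M)$-linearity in $X$; existence follows by defining $D^{\circ}$ through (a) on verticals and (b) on horizontals, the four identities $\rho\gamma=0$, $\rho\beta=\mathrm{id}$, $K\gamma=\mathrm{id}$, $K\beta=0$ giving both tensoriality in $X$ and the Leibniz rule in $\overline{Y}$. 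Your consistency check is also the right one: $\rho[\gamma\overline{X},G]=\overline{X}$ comes from $J[JX,G]=JX$ (i.e.\ from $[J,G]=\Gamma$ and $J\Gamma=J$), and $K[\beta\overline{X},\mathcal{C}]=0$ because homogeneity of the Barthel connection forces $[\mathcal{C},\beta\overline{X}]$ to be horizontal --- note it is this bracket, not the $\pi$-vector field $K[\beta\overline{X},\mathcal{C}]$, that ``lies in the horizontal distribution''; as phrased your sentence conflates the two.

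The one genuine gap is the closing identification. What your construction delivers is the unique regular connection on $\pi^{-1}(TM)$ whose vertical and horizontal covariant derivatives, relative to the Barthel splitting, are given by (a) and (b); that this connection \emph{is} the Berwald connection is precisely what the theorem asserts, and it cannot be settled by remarking that the two connections ``satisfy the same pair of equations (a)--(b)'' --- that is circular, since (a)--(b) for the Berwald connection is the statement to be proved. You must start from whatever definition or characterization of $D^{\circ}$ is actually adopted in \cite{r92} (a torsion/regularity characterization with horizontal map the Barthel one) and verify that it satisfies (a) and (b), or else carry out honestly the local computation showing that your connection has the classical Berwald coefficients; you mention the latter only parenthetically, and in a paper whose stated programme is ``entirely intrinsic'' one would in any case want that step done intrinsically. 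Until that identification is supplied, the proposal proves existence and uniqueness of a connection determined by (a)--(b), but not yet the theorem as stated.
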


\par We terminate this section
by some concepts and results concerning the Klein-Grifone approach
to intrinsic Finsler geometry. For more details, we refer to
\cite{r21}, \cite{r22} and  \cite{r27}.

\begin{prop}{\em{\cite{r27}}}\label{pp} Let $(M,L)$ be a Finsler manifold. The vector field
$G$ on $TM$ defined by $i_{G}\,\Omega =-dE$ is a spray, where
 $E:=\frac{1}{2}L^{2}$ is the energy function and $\Omega:=dd_{J}E$.
 Such a spray is called the canonical spray.
 \end{prop}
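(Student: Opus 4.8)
The plan is to show two things: first that $G$ defined by the equation $i_G\,\Omega = -dE$ exists and is uniquely determined, and second that it is a spray, i.e.\ that $JG = \mathcal{C}$ (equivalently $G$ is a second-order vector field) and that $[\mathcal{C},G] = G$ (homogeneity). The whole argument rests on the nondegeneracy of $\Omega = dd_J E$, which holds because $(M,L)$ is a Finsler manifold, so the fundamental tensor $g$ derived from $E = \tfrac12 L^2$ is nondegenerate; consequently $\Omega$ is a symplectic form on $TM$ (more precisely on $\T M$) and the map $X \mapsto i_X\Omega$ is an isomorphism from $\mathfrak{X}(\T M)$ onto one-forms. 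This immediately yields existence and uniqueness of $G$ with $i_G\Omega = -dE$.

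Next I would verify that $G$ is a second-order vector field. The standard trick is to apply the almost-tangent structure $J$ (or, in the pullback formalism, to use $J = \gamma\circ\rho$ and the identity $i_J\Omega$) and to exploit the structural identities $d_JE = i_J dE$, $i_{\mathcal{C}}\Omega = -d_JE$, and $i_J d_J = 0$ together with $d_J d_J = 0$. Contracting the defining relation $i_G\Omega = -dE$ with $J$ and comparing with $i_{\mathcal{C}}\,\Omega = -d_JE = -i_JdE$, one deduces $i_{JG}\Omega = i_{\mathcal{C}}\Omega$, and nondegeneracy of $\Omega$ forces $JG = \mathcal{C}$; that is exactly the condition that $G$ be a second-order (semispray) vector field. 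This is the step I expect to be the main obstacle, since it requires careful bookkeeping with the vector-form calculus ($i_L$, $d_L$, the derivation property of $d_J$) rather than any deep idea — getting the signs and the interplay between $i_J$ and $\Omega = dd_JE$ exactly right is where the work lies.

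Finally I would check homogeneity, namely $[\mathcal{C},G] = G$. Here I would use that $E$ is homogeneous of degree $2$ in the fibre variable, so $\mathcal{C}E = 2E$, equivalently $dE$ is homogeneous of degree $2$ in an appropriate sense, while $\Omega = dd_JE$ is homogeneous of degree $1$ (since $d_J$ lowers fibre-degree by one and $d$ preserves it, or more precisely $\mathcal{L}_{\mathcal{C}}\Omega = \Omega$). Taking the Lie derivative $\mathcal{L}_{\mathcal{C}}$ of $i_G\Omega = -dE$ and using $\mathcal{L}_{\mathcal{C}}(i_G\Omega) = i_{[\mathcal{C},G]}\Omega + i_G\mathcal{L}_{\mathcal{C}}\Omega = i_{[\mathcal{C},G]}\Omega + i_G\Omega$ and $\mathcal{L}_{\mathcal{C}}(dE) = d(\mathcal{C}E) = d(2E) = 2\,dE$, one gets $i_{[\mathcal{C},G]}\Omega + i_G\Omega = -2\,dE = 2\,i_G\Omega$, hence $i_{[\mathcal{C},G]-G}\Omega = 0$, and again nondegeneracy gives $[\mathcal{C},G] = G$. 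Combining the second-order condition with this homogeneity relation is precisely the definition of a spray, completing the proof; the name ``canonical spray'' is then just a label for this distinguished $G$.
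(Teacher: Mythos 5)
The paper itself gives no proof of this proposition: it is stated as background and attributed to Klein--Voutier \cite{r27}, so there is no internal argument to compare yours against. Your outline is the standard symplectic argument from that (Klein--Grifone) literature and its structure is sound: nondegeneracy of $\Omega=dd_JE$ on the slit tangent bundle (equivalent to regularity of the fundamental tensor) gives existence and uniqueness of $G$; the identity $i_J\Omega=0$, i.e.\ $\Omega(JX,Y)=-\Omega(X,JY)$, obtained from $d_J=[i_J,d]$, $d_Jd_J=0$ and $i_Jd_JE=0$, reduces the second-order property to comparing $i_{JG}\Omega$ with $i_{\mathcal{C}}\Omega$; and the Lie-derivative computation with $\mathcal{L}_{\mathcal{C}}\Omega=\Omega$ and $\mathcal{C}\cdot E=2E$ correctly yields $[\mathcal{C},G]=G$.

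Two details should be fixed or made explicit. First, with the paper's convention $\Omega:=dd_JE$ the auxiliary identity is $i_{\mathcal{C}}\Omega=+\,d_JE$ (in coordinates both sides equal $g_{ij}y^j\,dx^i$), while from the defining equation one gets $(i_{JG}\Omega)(Y)=-(i_G\Omega)(JY)=dE(JY)=d_JE(Y)$; taken literally, your asserted $i_{\mathcal{C}}\Omega=-d_JE$ together with ``$i_{JG}\Omega=i_{\mathcal{C}}\Omega$'' would force $JG=-\mathcal{C}$. The conclusion $JG=\mathcal{C}$ is right once both contractions are computed in the same convention, but since this sign bookkeeping is exactly the step you identify as the crux, it needs to be carried out, not asserted. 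Second, $i_{\mathcal{C}}\Omega=d_JE$ is not a purely structural fact about $J$: its derivation uses $[\mathcal{C},J]=-J$ \emph{and} the $2$-homogeneity of $E$ (in general one only has $i_{\mathcal{C}}\,dd_JE=d_J(\mathcal{C}E-E)$). So homogeneity of the Finsler energy already enters in the semispray step, not only in the final step $[\mathcal{C},G]=G$; for a general regular Lagrangian the equation $i_G\,dd_JE=-dE$ would not produce a second-order field, which is worth saying explicitly.
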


A nonlinear connection on $M$ is a vector $1$-form $\Gamma$ on $TM$,
$C^{\infty}$ on $\T M$, such that $J \Gamma=J, \,\, \Gamma J=-J .$
The horizontal and vertical projectors
 associated with $\Gamma$ are
defined by
   $h:=\frac{1}{2} (I+\Gamma)$ and $v:=\frac{1}{2}
 (I-\Gamma)$ respectively.   The torsion and  curvature of
$\Gamma$ are defined by $t:=\frac{1}{2}[J,\Gamma]$ and
 $\mathfrak{R}:=-\frac{1}{2}[h,h]$  respectively. A nonlinear
 connection $\Gamma$ is conservative if $d_{h}E=0$.

\begin{thm} \label{th.9a} {\em{\cite{r22}}} On a Finsler manifold $(M,L)$, there exists a unique
conservative homogenous nonlinear  connection  with zero torsion. It
is given by\,{\em:}  $\Gamma = [J,G].$ \\
 Such a nonlinear connection is called the canonical connection, the Barthel
connection or the Cartan nonlinear connection
 associated with $(M,L)$.
\end{thm}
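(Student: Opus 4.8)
The plan is to prove the theorem in two stages: first verify that $\Gamma := [J,G]$ is indeed a nonlinear connection with the required properties, and then establish uniqueness. For the first stage, I would compute $J\Gamma$ and $\Gamma J$ directly from the Frölicher--Nijenhuis bracket $[J,G]$, using the structure equations $J^2 = 0$ (which follows from $J = \gamma\circ\rho$ together with $\rho\circ\gamma = 0$ in the short exact sequence) and the fact that $G$ is a spray, i.e. $JG = \mathcal{C}$ and $[\mathcal{C},G] = G$ (homogeneity). The standard identity for the bracket of a vector $1$-form with a vector field gives $[J,G] = \mathcal{L}_G J$ up to sign conventions; expanding this and using $J^2=0$ should yield $J[J,G] = J$ and $[J,G]J = -J$. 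Homogeneity of $\Gamma$ follows from the homogeneity of $G$ and the degree-$0$ homogeneity of $J$, i.e.\ from $[\mathcal{C},J] = -J$ and $[\mathcal{C},G]=G$ via the Jacobi identity for the Frölicher--Nijenhuis bracket.

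Next I would check that the torsion $t = \frac{1}{2}[J,\Gamma]$ vanishes. Writing $\Gamma = [J,G]$ and invoking the graded Jacobi identity, $[J,[J,G]] = \frac{1}{2}[[J,J],G]$; since $J$ is an \emph{integrable} almost-tangent structure, its Nijenhuis tensor $[J,J]$ vanishes, and hence $t = 0$. For conservativeness, I would show $d_h E = 0$, equivalently $i_h\,dE = d_h E$ vanishes on horizontal and vertical arguments separately. On vertical vectors this reduces to a property of $J$ and $dE$; on horizontal vectors one uses $i_G\,\Omega = -dE$ from Proposition (the canonical spray), together with $h = \frac{1}{2}(I+\Gamma)$ and $\Omega = dd_J E$, to convert the statement into one about $G$ and $\Omega$ that holds because $G$ is the symplectic gradient of $E$.

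For uniqueness, suppose $\Gamma'$ is another conservative homogeneous torsion-free nonlinear connection, and set $A := \Gamma' - \Gamma$, a vector $1$-form. From $J\Gamma' = J\Gamma = J$ and $\Gamma'J = \Gamma J = -J$ one gets $JA = 0$ and $AJ = 0$, so $A$ is ``semibasic'' (it factors through the vertical bundle in both slots). The torsion-free condition for both connections forces $[J,A] = 0$, and for a semibasic vector $1$-form this bracket condition, combined with homogeneity ($\mathcal{L}_\mathcal{C} A = 0$, i.e. $A$ is homogeneous of degree $1$ in the appropriate sense) and conservativeness ($d_{h'}E - d_h E = 0$, which translates into $i_A\,dE = 0$), pins down $A = 0$. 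The main obstacle I anticipate is this last step: extracting $A = 0$ requires a careful nondegeneracy argument, using that the metric tensor $g$ derived from $E$ is nondegenerate on the vertical bundle, so that the vanishing of the associated quadratic/bilinear expressions $g(A(\cdot),\cdot)$ coming from the conservativeness and symmetry conditions actually forces $A$ itself to vanish rather than merely some trace or symmetrization of it. This is where the Finslerian hypothesis (regularity of $L$) is genuinely used, and where the computation is least mechanical.
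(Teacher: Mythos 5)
Note first that the paper offers no proof of this theorem: it is quoted from Grifone \cite{r22}, so the comparison is with the classical argument there. Your plan is essentially correct but handles uniqueness by a genuinely different route. For existence, your scheme is the standard one, with one small inaccuracy: $J\Gamma=J$ and $\Gamma J=-J$ do not follow from $J^2=0$ alone; one needs the semispray property $JG=\mathcal{C}$ together with $[J,\mathcal{C}]=J$ and the integrability $[J,J]=0$ (equivalently the lemma $J[JX,G]=JX$), while your torsion computation via the graded Jacobi identity and the homogeneity computation $[\mathcal{C},[J,G]]=[[\mathcal{C},J],G]+[J,[\mathcal{C},G]]=0$ are exactly the classical steps. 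For uniqueness, Grifone does not subtract connections: he shows that zero torsion together with homogeneity (zero tension, i.e.\ zero strong torsion) force $\Gamma'=[J,G']$ where $G'$ is the spray associated with $\Gamma'$, and conservativeness then gives $i_{G'}\Omega=-dE$, so $G'=G$ by the nondegeneracy of $\Omega=dd_JE$ (uniqueness of the canonical spray) and hence $\Gamma'=\Gamma$. Your difference-tensor route does close, and the step you flag as the obstacle works as follows: with $A=\Gamma'-\Gamma$ semibasic, the conditions $[J,A]=0$ and $[\mathcal{C},A]=0$ say in coordinates that $\partial_{y^l}A^i_k$ is symmetric in $(l,k)$ and $A^i_k$ is positively homogeneous of degree $1$, so by Euler's theorem $A^i_l=\frac{1}{2}\partial_{y^l}\phi^i$ with $\phi^i:=A^i_ky^k$; conservativeness reads $y_iA^i_k=0$ (where $y_i=\partial_{y^i}E$), whose contraction with $y^k$ gives $y_i\phi^i=0$, and then $0=y_i\partial_{y^k}\phi^i=\partial_{y^k}(y_i\phi^i)-g_{ik}\phi^i=-g_{ik}\phi^i$, so $\phi^i=0$ by regularity of $L$ and hence $A=0$. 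What Grifone's route buys is economy: nondegeneracy enters only once, through the already-established uniqueness of the canonical spray, whereas your route requires the separate Euler-type computation above; what your route buys is that it never needs the notion of the spray and strong torsion associated with an arbitrary connection.
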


\Section{Projective changes and Berwald connection}

In this section, the projective change of Barthel
 and  Berwald connections, as well as their curvatuer
tensors, are investigated. Some characterizations of projective
changes are established.

\begin{defn}Let $(M,L)$ be a Finsler manifold. A change $L\To \widetilde{L}$ of Finsler structures  is said to be a projective
change if each geodesic of $(M,L)$ is a geodesic of
$(M,\widetilde{L})$ and vice versa. In this case, the Finsler
manifolds $(M,L)$ and $(M,\widetilde{L})$ are said to be
projectively related.
\end{defn}
Let $c:I\longrightarrow M:s\longmapsto(x^{i}(s))\, ; i=1,2,...,n$,
be a regular curve on a Finsler manifold  $(M,L)$. If $G$ is the
canonical spray  associated with $(M,L)$, then $c$ is a geodesic iff
$G(c')=c''$, where $c'=\frac{dc}{ds}$ ($s$ being the arc-length). In
local coordinates, $c$ is a geodesic iff
$$({x^{i}})^{''}+2G^{i}(x,x')=0 .$$
If this equation  is subjected to an arbitrary transformation of its
parameter $s\longmapsto t=t(s)$, with $\frac{dt}{ds}\neq0$, then $c$
is a geodesic iff
$$y^{i}=\frac{dx^{i}}{dt}\,, \quad \frac{dy^{i}}{dt}+2G^{i}(x,y)=\mu y^{i},$$
 where
$\mu:=-(\frac{d^{2}t}{ds^{2}})/(\frac{dt}{ds})^{2}$. It is clear
that these equations remain unchanged if we replace the functions
$G^{i}(x,y)$ by new functions $\widetilde{G^{i}}(x,y)$, the latter
being defined by
\begin{equation}\label{eqq}
    \widetilde{G^{i}}(x,y)=G^{i}(x,y)+\lambda(x,y)y^{i},
\end{equation}
where $\lambda(x,y)$ is an arbitrary function which is  positively
homogenous of degree 1 in the directional argument $y$.  This result
can be formulated intrinsically as follows.
\begin{thm}\label{pp.t3} Two Finsler manifolds  $(M,L)$ and $(M,\widetilde{L})$ are  projectively
related if, and only if,  the associated canonical sprays $G$ and
$\tilde{G}$ are related by \vspace{-0.2cm}
\begin{equation}\label{GG}
  \widetilde{G}= G
-2\lambda(x,y)\mathcal{C} , \vspace{-0.2cm}
\end{equation}
where $\lambda(x,y)$ is a function on $TM$,   positively homogenous
of degree 1 in $y$.
\end{thm}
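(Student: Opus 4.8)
The plan is to derive the intrinsic identity \eqref{GG} from the local characterization \eqref{eqq} already obtained from the geodesic equation, by translating it into coordinate-free language. Recall that, in the natural local coordinates $(x^i,y^i)$ on $TM$, every spray $G$ has the form $G=y^i\,\frac{\partial}{\partial x^i}-2G^i(x,y)\,\frac{\partial}{\partial y^i}$, while the canonical (Liouville) vector field is $\mathcal{C}=\gamma\,\overline{\eta}=y^i\,\frac{\partial}{\partial y^i}$. Thus, for any two sprays $G$ and $\widetilde{G}$ on $TM$, the difference $\widetilde{G}-G$ equals the vertical vector field $-2(\widetilde{G^i}-G^i)\,\frac{\partial}{\partial y^i}$; equivalently, $J\widetilde{G}=JG=\mathcal{C}$ forces $J(\widetilde{G}-G)=0$, so $\widetilde{G}-G$ is vertical and hence of the form $\gamma\,\overline{Z}$ for a unique $\pi$-vector field $\overline{Z}$.

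For the ``only if'' direction, assume $(M,L)$ and $(M,\widetilde{L})$ are projectively related. By the analysis of the (reparametrized) geodesic equation carried out above, this is equivalent to the spray coefficients being related by $\widetilde{G^i}(x,y)=G^i(x,y)+\lambda(x,y)\,y^i$ for some function $\lambda$ on $TM$ which is positively homogeneous of degree $1$ in $y$ --- the term $\lambda y^i$ being precisely what absorbs the factor $\mu y^i$ produced by the change of parameter. Substituting into the coordinate form of $\widetilde{G}$,
$$\widetilde{G}=y^i\,\frac{\partial}{\partial x^i}-2\widetilde{G^i}\,\frac{\partial}{\partial y^i}=\Bigl(y^i\,\frac{\partial}{\partial x^i}-2G^i\,\frac{\partial}{\partial y^i}\Bigr)-2\lambda\,y^i\,\frac{\partial}{\partial y^i}=G-2\lambda\,\mathcal{C},$$
which is \eqref{GG}; since the right-hand side is chart-independent, the identity is intrinsic. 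In the notation of the previous paragraph this says $\overline{Z}=-2\lambda\,\overline{\eta}$, i.e. $\overline{Z}$ is collinear with the fundamental $\pi$-vector field.

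For the converse, suppose $\widetilde{G}=G-2\lambda\,\mathcal{C}$ with $\lambda$ on $TM$ positively homogeneous of degree $1$ in $y$. First one checks that $\widetilde{G}$ is again a spray: $J\widetilde{G}=JG-2\lambda\,J\mathcal{C}=\mathcal{C}$, since $\mathcal{C}$ is vertical and $\rho\circ\gamma=0$ gives $J\mathcal{C}=\gamma\rho\gamma\,\overline{\eta}=0$; and $[\mathcal{C},\widetilde{G}]=\widetilde{G}$ follows from $[\mathcal{C},G]=G$, $[\mathcal{C},\mathcal{C}]=0$ and $\mathcal{C}(\lambda)=\lambda$, the last being exactly the positive homogeneity of degree $1$. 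Reading \eqref{GG} in coordinates gives back $\widetilde{G^i}=G^i+\lambda y^i$, and running the geodesic computation that precedes the theorem in reverse shows that every geodesic of $(M,L)$ becomes, after the change of parameter determined by the relevant ordinary differential equation along the curve (which is solvable precisely because $\lambda$ is homogeneous of degree $1$ in $y$), a geodesic of $(M,\widetilde{L})$, and symmetrically. Hence $(M,L)$ and $(M,\widetilde{L})$ are projectively related.

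The only subtlety needing attention is the bookkeeping of the parametrization: ``projectively related'' refers to the geodesics as unparametrized paths, so the correspondence between \eqref{eqq} and \eqref{GG} is a statement about pregeodesics, and the homogeneity hypothesis on $\lambda$ plays a double role --- it is what keeps $\widetilde{G}$ a (quadratic) spray and what makes the reparametrization equation integrable. Once this is kept straight, the theorem is essentially a dictionary entry translating the local relation \eqref{eqq} into the coordinate-free relation \eqref{GG}.
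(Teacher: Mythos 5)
Your proof is correct and follows essentially the same route as the paper: the paper derives the local relation (\ref{eqq}) from the reparametrized geodesic equation and then states Theorem \ref{pp.t3} as its intrinsic reformulation, which is precisely the coordinate translation $\widetilde{G^i}=G^i+\lambda y^i \Leftrightarrow \widetilde{G}=G-2\lambda\,\mathcal{C}$ that you carry out. Your additional verifications (that $\widetilde{G}-G$ is vertical, that $J\widetilde{G}=\mathcal{C}$ and $[\mathcal{C},\widetilde{G}]=\widetilde{G}$ via $\mathcal{C}\cdot\lambda=\lambda$) are harmless extras that the paper leaves implicit.
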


\begin{rem}It is to be noted  that the local expression of \emph{(\ref{GG})} reduces to \emph{(\ref{eqq})}
and  is in accordance with the existing classical local results on
projective changes \emph{\cite{r42}, \cite{r54}}. For this reason,
we have inserted the factor $(-2)$  in \emph{(\ref{GG})}. Moreover,
this factor facilities calculations.
\end{rem}

In what follows and throughout  we will take (\ref{GG}) as the
definition of a  projective change.
\par
The following lemma is useful for subsequence use.\vspace{-0.2cm}
\begin{lem}\label{pp.t2} The $\pi$-form $\alpha$ defined by

 \begin{equation}\label{alfa}
   \alpha(\overline{X}):=d_{J}\lambda(\beta \overline{X})
 \end{equation}
  has the following
 properties\emph{:}
\begin{description}
  \item[(a)]
   $(D^{\circ}_{\gamma \overline{X}}\alpha )(\overline{Y})=d d_{J}\lambda(\gamma \overline{X}, \beta
   \overline{Y})$,
 \quad\emph{\textbf{(b)}}\  $D^{\circ}_{\gamma \overline{\eta}}\alpha =0$, \quad\emph{\textbf{(c)}}
$\alpha(\overline{\eta})=\lambda$.
\end{description}
\end{lem}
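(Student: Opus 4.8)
\noindent The strategy is first to rewrite $\alpha$ in a form that does not involve $\beta$, and then let the three parts fall out almost immediately. Since $\lambda$ is a function, $d_J\lambda=i_Jd\lambda$, and $J=\gamma\circ\rho$ together with $\rho\circ\beta=\mathrm{id}_{\pi^{-1}(TM)}$ gives $J\beta\overline{X}=\gamma\overline{X}$; hence
\[
\alpha(\overline{X})=d_J\lambda(\beta\overline{X})=d\lambda(\gamma\overline{X})=(\gamma\overline{X})(\lambda)\qquad\text{for all }\overline{X}.
\]
Part \textbf{(c)} is then immediate: $\alpha(\overline{\eta})=(\gamma\overline{\eta})(\lambda)=\mathcal{C}(\lambda)=\lambda$, the last step being Euler's theorem applied to $\lambda$ (positively homogeneous of degree $1$ in $y$).

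For \textbf{(a)}, I would expand the covariant derivative of the $\pi$-form $\alpha$,
\[
(D^{\circ}_{\gamma\overline{X}}\alpha)(\overline{Y})=(\gamma\overline{X})\big(\alpha(\overline{Y})\big)-\alpha\big(D^{\circ}_{\gamma\overline{X}}\overline{Y}\big),
\]
and use Theorem~\ref{pp.3}\,(a), namely $D^{\circ}_{\gamma\overline{X}}\overline{Y}=\rho[\gamma\overline{X},\beta\overline{Y}]$, so that $\alpha(D^{\circ}_{\gamma\overline{X}}\overline{Y})=d\lambda(\gamma\rho[\gamma\overline{X},\beta\overline{Y}])=d\lambda(J[\gamma\overline{X},\beta\overline{Y}])=d_J\lambda([\gamma\overline{X},\beta\overline{Y}])$. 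On the other hand, expanding the exterior derivative,
\[
dd_J\lambda(\gamma\overline{X},\beta\overline{Y})=(\gamma\overline{X})\big(d_J\lambda(\beta\overline{Y})\big)-(\beta\overline{Y})\big(d_J\lambda(\gamma\overline{X})\big)-d_J\lambda([\gamma\overline{X},\beta\overline{Y}]),
\]
and here $d_J\lambda(\beta\overline{Y})=\alpha(\overline{Y})$, while $d_J\lambda(\gamma\overline{X})=d\lambda(J\gamma\overline{X})=0$ because $\rho\circ\gamma=0$ forces $J\gamma=\gamma\rho\gamma=0$. Comparing the two displays yields \textbf{(a)}.

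For \textbf{(b)}, I would put $\overline{X}=\overline{\eta}$ in \textbf{(a)}, reducing the claim to $dd_J\lambda(\mathcal{C},\beta\overline{Y})=(i_{\mathcal{C}}\,dd_J\lambda)(\beta\overline{Y})=0$, i.e. to $i_{\mathcal{C}}\,dd_J\lambda=0$. Writing $d_{\mathcal{C}}:=[i_{\mathcal{C}},d]$ (the Lie derivative along $\mathcal{C}$), Cartan's identity gives $i_{\mathcal{C}}\,dd_J\lambda=d_{\mathcal{C}}(d_J\lambda)-d\big(i_{\mathcal{C}}d_J\lambda\big)$; the last term vanishes since $i_{\mathcal{C}}d_J\lambda=d_J\lambda(\gamma\overline{\eta})=d\lambda(J\gamma\overline{\eta})=0$ (again $\rho\circ\gamma=0$). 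For the remaining term I would invoke the derivation identity $[d_{\mathcal{C}},d_J]=d_{[\mathcal{C},J]}$ together with the homogeneity relation $[\mathcal{C},J]=-J$ (the natural almost tangent structure $J$ being homogeneous of degree $-1$ with respect to the Liouville field) and Euler's theorem $d_{\mathcal{C}}\lambda=\mathcal{C}(\lambda)=\lambda$:
\[
d_{\mathcal{C}}(d_J\lambda)=d_J(d_{\mathcal{C}}\lambda)+d_{[\mathcal{C},J]}\lambda=d_J\lambda-d_J\lambda=0 .
\]
Hence $i_{\mathcal{C}}\,dd_J\lambda=0$ and \textbf{(b)} follows. (Alternatively, \textbf{(b)} can be obtained by expanding $(D^{\circ}_{\gamma\overline{\eta}}\alpha)(\overline{Y})$ directly, using that $D^{\circ}_{\gamma\overline{\eta}}\alpha$ is $\mathfrak{F}(\T M)$-linear in $\overline{Y}$ and applying Euler's theorem to the degree-$0$ function $d\lambda(\gamma\overline{Y})$.)

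All the steps are formal manipulations with $\gamma,\rho,\beta,J$ and the operators $d,i_J,i_{\mathcal{C}}$, plus the characterization of $D^{\circ}$ in Theorem~\ref{pp.3}; the only non-formal inputs are Euler's theorem for homogeneous functions and the standard rule $[\mathcal{C},J]=-J$. I therefore do not anticipate any real obstacle: the substance of the lemma is the reduction $\alpha(\overline{X})=d\lambda(\gamma\overline{X})$, after which all three assertions are short.
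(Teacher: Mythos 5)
Your proposal is correct, and parts (a) and (c) run essentially as in the paper: for (a) you expand $(D^{\circ}_{\gamma\overline{X}}\alpha)(\overline{Y})$ and the 2-form $dd_{J}\lambda$ and compare, exactly as the authors do, the only cosmetic difference being that you absorb the bracket term via $\gamma\circ\rho=J$ (so $\alpha(\rho Z)=d_{J}\lambda(Z)$), whereas the paper decomposes $[\gamma\overline{X},\beta\overline{Y}]$ with $\beta\circ\rho+\gamma\circ K=I$ and uses that $d_{J}\lambda$ kills vertical vectors; (c) is not argued in the paper at all and your Euler-theorem line is the obvious justification. The genuine divergence is in (b): the paper re-expands $(D^{\circ}_{\gamma\overline{\eta}}\alpha)(\overline{X})$ and grinds through Lie brackets of $\mathcal{C}=JG$, $\gamma\overline{X}$ and $G$ (using $J^{2}=0$) until it reaches $\gamma\overline{X}\cdot(\mathcal{C}\cdot\lambda)-\gamma\overline{X}\cdot\lambda$, which vanishes by $\mathcal{C}\cdot\lambda=\lambda$; you instead prove the stronger operator identity $i_{\mathcal{C}}\,dd_{J}\lambda=0$ from Cartan's formula together with $[d_{\mathcal{C}},d_{J}]=d_{[\mathcal{C},J]}$, $[\mathcal{C},J]=-J$ and Euler's theorem, i.e.\ you show $d_{J}\lambda$ is invariant under $\mathcal{L}_{\mathcal{C}}$ (homogeneous of degree $0$) and then restrict to the pair $(\mathcal{C},\beta\overline{Y})$. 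Both routes ultimately rest on the same two inputs — the degree-$1$ homogeneity of $\lambda$ and the homogeneity of $J$ (your $[\mathcal{C},J]=-J$ is just the identity $[J,\mathcal{C}]=J$ the paper itself invokes in the proof of Theorem 5) — but your derivation-calculus argument is shorter, avoids the slightly opaque manipulation with $J[JG,\gamma\overline{X}]$, and gives a cleaner conceptual reason (it is exactly the $h(0)$ property of $d_J\lambda$ that resurfaces later as $(\stackrel{2}{D^{\circ}}\alpha)(\overline{\eta},\cdot)=0$ in Lemma 8), at the cost of importing the Fr\"olicher--Nijenhuis commutation rule $[d_{K},d_{L}]=d_{[K,L]}$, which is legitimate here since the paper already works with $d_{L}=[i_{L},d]$ and cites Fr\"olicher--Nijenhuis.
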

\begin{proof}~\par
 \noindent \textbf{(a)}\ \ We use Theorem  \ref{pp.3} and the
 identities $ \beta\circ\rho +
   \gamma\circ K=I$  and $ i_{J}d_{J}\lambda=0$:
\begin{eqnarray*}
   dd_{J}\lambda(\gamma \overline{X},\beta \overline{Y}) &=& \gamma \overline{X}\cdot d_{J}\lambda(\beta \overline{Y})
   -d_{J}\lambda([\gamma \overline{X},\beta \overline{Y}])  \\
   &=&\gamma \overline{X}\cdot \alpha(\overline{Y})
   -d_{J}\lambda(\beta\rho[\gamma \overline{X},\beta \overline{Y}])\\
     &=&\gamma \overline{X}\cdot \alpha(\overline{Y})
   -\alpha(D^{\circ}_{\gamma \overline{X}}\overline{Y})=(D^{\circ}_{\gamma \overline{X}}\alpha )(\overline{Y}).
\end{eqnarray*}
\vspace{3pt} \noindent \textbf{(b)}\ \ From (a) above, we have
\begin{eqnarray*}
   (D^{\circ}_{\gamma \overline{\eta}}\alpha )(\overline{X})&=&
    \mathcal{C}\cdot d_{J}\lambda(\beta \overline{X})
   -d_{J}\lambda([\mathcal{C},\beta \overline{X}])
   =\mathcal{C}\cdot(\gamma\overline{X}\cdot\lambda)-J[J G,\gamma\overline{X}]\cdot\lambda\\
   &=&\mathcal{C}\cdot(\gamma\overline{X}\cdot\lambda)-\{[J
   G,\gamma\overline{X}]-J[G,\gamma\overline{X}]\}\cdot\lambda, \ \ (as \ J^{2}=0)\\
   &=&\{\mathcal{C}\cdot(\gamma\overline{X}\cdot\lambda)-[J
   G,\gamma\overline{X}]\cdot\lambda\}-JX\cdot\lambda=\gamma\overline{X}\cdot(\mathcal{C}\cdot\lambda)-\gamma\overline{X}\cdot\lambda.
\end{eqnarray*}
As $\mathcal{C}\cdot\lambda=\lambda$, the result follows.
\end{proof}

An important  characterization of projective changes is given by
\vspace{-0.2cm}
\begin{thm}\label{th.3} The following assertions are equivalent:
\begin{description}
  \item[(a)] $(M,L)$ and $(M,\widetilde{L})$  are projectively
related.
  \item[(b)] The associated Barthel connections $\Gamma$ and
$\widetilde{\Gamma}$ are related by
\vspace{-0.2cm}$$\widetilde{\Gamma}=\Gamma - 2\{\lambda J +
d_{J}\lambda \otimes \mathcal{C}\} .$$
  \item[(c)]The associated Berwald connections $D^{\circ}$ and
$\widetilde{D^{\circ}}$ are related by
\begin{equation}\label{eq.1}
     \widetilde{D}^{\circ}_{X}\overline{Y}= D^{\circ} _{X}\overline{Y} +  \alpha( \overline{Y}) \rho X+
\alpha(\rho X) \overline{Y} + (\stackrel{2}{D^{\circ}}\!\alpha)
(\overline{Y},\rho X)\overline{\eta}.
\end{equation}

\end{description}

\end{thm}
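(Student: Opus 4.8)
\emph{Plan of proof.} I would prove the cycle $(a)\Rightarrow(b)\Rightarrow(c)\Rightarrow(a)$, reading $(a)$ throughout --- via Theorem \ref{pp.t3} --- as the spray relation $\widetilde{G}=G-2\lambda\mathcal{C}$ of (\ref{GG}). The implications $(a)\Rightarrow(b)$ and $(c)\Rightarrow(a)$ are short; $(b)\Rightarrow(c)$ is the computational heart.

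\emph{$(a)\Rightarrow(b)$.} By Theorem \ref{th.9a} the Barthel connections of $(M,L)$ and $(M,\widetilde{L})$ are $\Gamma=[J,G]$ and $\widetilde{\Gamma}=[J,\widetilde{G}]$, so bilinearity of the bracket and (\ref{GG}) give $\widetilde{\Gamma}=\Gamma-2[J,\lambda\mathcal{C}]$. Evaluating $[J,\lambda\mathcal{C}]$ by the Leibniz rule for $[J,\,\cdot\,]$, together with the classical relations $[J,\mathcal{C}]=J$ (equivalently $\mathcal{L}_{\mathcal{C}}J=-J$), $J\mathcal{C}=0$ and $d\lambda(JZ)=d_{J}\lambda(Z)$, yields $[J,\lambda\mathcal{C}]=\lambda J+d_{J}\lambda\otimes\mathcal{C}$, which is $(b)$.

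\emph{$(b)\Rightarrow(c)$.} Assertion $(b)$ says the horizontal and vertical projectors transform by $\widetilde{h}=h-\lambda J-d_{J}\lambda\otimes\mathcal{C}$ and $\widetilde{v}=v+\lambda J+d_{J}\lambda\otimes\mathcal{C}$. Using $J=\gamma\circ\rho$, $\rho\circ\gamma=0$, $K\circ\gamma=\mathrm{id}$, $\gamma\overline{\eta}=\mathcal{C}$, the defining relation $d_{J}\lambda(\beta\overline{X})=\alpha(\overline{X})$ from (\ref{alfa}), and the fact that $d_{J}\lambda$ annihilates vertical vectors (so $d_{J}\lambda=\alpha\circ\rho$ on $T(\T M)$), this translates into the new horizontal and connection maps
\[
\widetilde{\beta}\,\overline{X}=\beta\overline{X}-\lambda\gamma\overline{X}-\alpha(\overline{X})\mathcal{C},\qquad
\widetilde{K}X=KX+\lambda\rho X+\alpha(\rho X)\overline{\eta}.
\]
Both sides of (\ref{eq.1}) are $\mathfrak{F}(\T M)$-linear in $X$, so it suffices to check it on $X=\gamma\overline{X}$ and on $X=\widetilde{\beta}\,\overline{X}$, which span $\mathfrak{X}(\T M)$. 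For $X=\gamma\overline{X}$ (so $\rho X=0$) both sides collapse to $D^{\circ}_{\gamma\overline{X}}\overline{Y}$, using Theorem \ref{pp.3}(a) and the fact that brackets of $\gamma\overline{X}$ with vertical fields are vertical, hence killed by $\rho$. For $X=\widetilde{\beta}\,\overline{X}$, Theorem \ref{pp.3}(b) gives $\widetilde{D}^{\circ}_{\widetilde{\beta}\overline{X}}\overline{Y}=\widetilde{K}[\widetilde{\beta}\,\overline{X},\gamma\overline{Y}]$; I would expand the bracket by Leibniz, apply $\widetilde{K}$, and simplify with Theorem \ref{pp.3}(a),(b), the Berwald identities for brackets of vertical lifts and for the deflection ($D^{\circ}_{\gamma\overline{Y}}\overline{\eta}=\overline{Y}$, $K\gamma=\mathrm{id}$), and $\gamma\overline{Y}\cdot\lambda=\alpha(\overline{Y})$. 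The half-dozen Leibniz terms then regroup, after cancellation of the two $\lambda D^{\circ}_{\gamma\overline{Y}}\overline{X}$ contributions, into $D^{\circ}_{\widetilde{\beta}\overline{X}}\overline{Y}$ plus $\alpha(\overline{Y})\overline{X}+\alpha(\overline{X})\overline{Y}$ plus an $\overline{\eta}$-term equal to $\big(\gamma\overline{Y}\cdot\alpha(\overline{X})-\alpha(D^{\circ}_{\gamma\overline{Y}}\overline{X})\big)\overline{\eta}=(\stackrel{2}{D^{\circ}}\alpha)(\overline{Y},\overline{X})\overline{\eta}$ by the definition of $\stackrel{2}{D^{\circ}}$ --- which is exactly the right-hand side of (\ref{eq.1}) with $\rho X=\overline{X}$. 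This regrouping is the step I expect to be the main obstacle: one has to keep careful account of which Leibniz terms are annihilated by $\rho$ or by $K$, see the $\lambda D^{\circ}_{\gamma\overline{Y}}\overline{X}$ cancellation, and recognise the residual vertical term as $\stackrel{2}{D^{\circ}}\alpha$.

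\emph{$(c)\Rightarrow(a)$.} Set $\overline{Y}=\overline{\eta}$ in (\ref{eq.1}). Since $\alpha(\overline{\eta})=\lambda$ and $(\stackrel{2}{D^{\circ}}\alpha)(\overline{\eta},\rho X)=(D^{\circ}_{\gamma\overline{\eta}}\alpha)(\rho X)=0$ by Lemma \ref{pp.t2}(b),(c), the deflection map of $\widetilde{D}^{\circ}$ is $\widetilde{K}X=\widetilde{D}^{\circ}_{X}\overline{\eta}=KX+\lambda\rho X+\alpha(\rho X)\overline{\eta}$. Hence $\widetilde{v}=\gamma\circ\widetilde{K}=v+\lambda J+d_{J}\lambda\otimes\mathcal{C}$, so the Barthel connection of $(M,\widetilde{L})$ is $\widetilde{\Gamma}=I-2\widetilde{v}=\Gamma-2\{\lambda J+d_{J}\lambda\otimes\mathcal{C}\}$, and its canonical spray --- the unique spray horizontal with respect to $\widetilde{\Gamma}$ --- is $\widetilde{G}=G-2\lambda\mathcal{C}$, as one checks directly using $\widetilde{K}\mathcal{C}=\overline{\eta}$, $\widetilde{K}G=2\lambda\overline{\eta}$, $\rho\mathcal{C}=0$, $J(G-2\lambda\mathcal{C})=\mathcal{C}$ and the homogeneity $\mathcal{C}\cdot\lambda=\lambda$. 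By Theorem \ref{pp.t3} this is assertion $(a)$, closing the cycle. (The implications $(b)\Rightarrow(a)$ and $(c)\Rightarrow(b)$ are contained in this last step, so the three statements are indeed equivalent.)
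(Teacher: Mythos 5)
Your proposal is correct and follows essentially the same route as the paper: the Frölicher--Nijenhuis bracket computation for $(a)\Rightarrow(b)$, a vertical/horizontal decomposition with Theorem \ref{pp.3} and Leibniz expansions for $(b)\Rightarrow(c)$, and the specialization $\overline{Y}=\overline{\eta}$ with Lemma \ref{pp.t2} for $(c)\Rightarrow(a)$. The only (cosmetic) difference is that in $(b)\Rightarrow(c)$ you evaluate on the frame $\gamma\overline{X}$, $\widetilde{\beta}\,\overline{X}$ using the explicit maps $\widetilde{\beta}$, $\widetilde{K}$, while the paper splits a general $X$ through $\widetilde{v}X$ and $\widetilde{h}X$; your claimed cancellations and the final regrouping agree with the paper's equations (\ref{aa})--(\ref{pp}) and Corollary \ref{pp.t5}.
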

\begin{proof}~\par
\noindent\textbf{(a) $\Longrightarrow$(b)}: As $ \widetilde{G}= G
-2\lambda(x,y)\mathcal{C}$, we have
$$
  \widetilde{\Gamma} = [J,\widetilde{G}] \\
   = [J, G -2\lambda \mathcal{C} ] \\
   = [J, G]-2[J,\lambda \mathcal{C}].
$$
Since, for every vector form $A$ on $TM$, $X\in \mathfrak{X}(TM)$
and $f\in\mathfrak{F}(TM)$ \cite{r20},
$$ [A,f X]= f[A, X]+d_{A}f\otimes X-df\wedge i_{X}A ,$$
we obtain
\begin{eqnarray*}
  [J,\lambda \mathcal{C}] &=& \lambda[J, \mathcal{C}]
  +d_{J}\lambda\otimes \mathcal{C}-d\lambda\wedge i_{\mathcal{C}}J \\
  &=& \lambda J +d_{J}\lambda\otimes \mathcal{C},  \ \
  (as \ [J,\mathcal{C}]=J \ and \ i_{\mathcal{C}}J=0).
\end{eqnarray*}
From which (b) follows.

\vspace{6pt}
 \noindent\textbf{(b) $\Longrightarrow$(c)}: If (b) holds, then
\begin{equation}\label{hv}
    \widetilde{h}=h-\lambda J -
d_{J}\lambda \otimes \mathcal{C}, \qquad \widetilde{v}=v+\lambda J +
d_{J}\lambda \otimes \mathcal{C}.
\end{equation}
Using Theorem \ref{pp.3} and (\ref{hv}), we get
\begin{eqnarray}\label{aa}
  \widetilde{D}^{\circ}_{\widetilde{v}X}\rho Y &=&
  \rho[\widetilde{v} X,Y]= \rho[v X,Y]+\rho[\lambda JX+d_{J}\lambda(X)\mathcal{C},Y]\nonumber\\
    & =&\rho[v X,Y]+\{\lambda \rho[ JX,Y]-(Y\cdot \lambda) \rho
    JX\}\nonumber\\
    &&+\{d_{J}\lambda(X)\rho [\mathcal{C},Y]-(Y\cdot d_{J}\lambda(X))\rho(\mathcal{C})\}\nonumber\\
    & =& D^{\circ}_{v X}\rho Y+\lambda \rho[ JX,Y]+\alpha(\rho X)\rho
    [\mathcal{C},Y].
\end{eqnarray}
Similarly, one can show that
\begin{eqnarray*}
\gamma \widetilde{D}^{\circ}_{\widetilde{h}X}\rho Y&=&
\gamma D^{\circ}_{h X}\rho Y-\lambda v\{J[X, JY]+J[JX, Y]\} \\
   & &+(JY \cdot \lambda)
 v(JX)-d_{J}\lambda(X)\{J[\mathcal{C},Y]+J[G,JY]\}\\
 &&+(JY \cdot d_{J}\lambda (X)) v(\mathcal{C})+\lambda
 J[hX,JY]-
 (d_{J}\lambda([JY,hX])) \mathcal{C}.
\end{eqnarray*}
From which, taking into account the fact that $Jv=0$, $vJ=J$ and
 $\gamma:\pi^{-1}(TM)\longrightarrow V(TM)$ is an isomorphism, we get
\begin{eqnarray}\label{pp}
   \widetilde{D}^{\circ}_{\widetilde{h}X}\rho Y
  &=& D^{\circ}_{hX}\rho Y+ d_{J}\lambda(Y)\rho X
   + d_{J}\lambda(X)\rho Y \nonumber \\
 & & + dd_{J}\lambda(JY, hX)\overline{\eta} -\lambda
 \rho[JX,Y]-d_{J}\lambda(X)\rho[\mathcal{C},Y]\nonumber\\
  &=& D^{\circ}_{hX}\rho Y+ \alpha(\rho Y)\rho X
   + \alpha(\rho X)\rho Y + (D^{\circ}_{JY}\alpha)(\rho X)\overline{\eta}\nonumber \\
 & &-\lambda
 \rho[JX,Y]-\alpha(\rho X)\rho[\mathcal{C},Y].
\end{eqnarray}
Hence, (\ref{eq.1}) follows from (\ref{aa}) and (\ref{pp}).

\vspace{6pt}
 \noindent\textbf{(c) $\Longrightarrow$(a)}:  Assume that  Equation (\ref{eq.1}) holds. Then,  by setting
$\overline{Y}=\overline{\eta}$ in (\ref{eq.1}), noting that
$\alpha(\overline{\eta})=\lambda$ (Lemma \ref{pp.t2}(c)), we get
$$\widetilde{K}^{\circ}(X)=K^{\circ}(X)+\lambda \rho X+\alpha(\rho X)\overline{\eta}
+(D^{\circ}_{\gamma \overline{\eta}}\alpha )(\rho
X)\overline{\eta}.$$ From which, together with Lemma \ref{pp.t2}(b)
and the fact that $v=v^{\circ}=\gamma \circ K^{\circ}=\gamma \circ K
$ \cite{r92}, we get

 $$\widetilde{v}X=vX+\lambda JX +\alpha(\rho X) \mathcal{C}.$$
 Consequently,

 \begin{equation*}
 \widetilde{h}X=hX-\lambda JX -\alpha(\rho X) \mathcal{C}.
 \end{equation*}
 Setting $X=G$ in the last relation, taking into account Lemma \ref{pp.t2}(c) and
  $\widetilde{h} G=\widetilde{\beta} \overline{\eta}=\widetilde{G}$,
  we obtain

  $$\widetilde{G}=G-2 \lambda \mathcal{C}.$$

 Hence, by Theorem \ref{pp.t3}, $(M,L)$ and $(M,\widetilde{L})$  are  projectively
related.
\end{proof}

\begin{cor} Under the projective change {\emph{(\ref{GG})}}, the curvature
tensors
 $\mathfrak{R}$ and $\widetilde{\mathfrak{R}}$ of the associated Barthel connections
$\Gamma$ and  $\widetilde{\Gamma}$   are related by
$$\widetilde{\mathfrak{R}}=\mathfrak{R}+ \frac{1}{2}\left(2d_{h}\lambda-d_{J}\lambda^{2}\right)\wedge J
+d_{h}d_{J}\lambda \otimes \mathcal{C}.$$
\end{cor}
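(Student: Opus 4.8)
The plan is to translate the statement into a relation between the horizontal projectors and then expand a Frölicher--Nijenhuis bracket. First I would combine Theorems \ref{pp.t3} and \ref{th.3}: the projective change (\ref{GG}) makes $(M,L)$ and $(M,\widetilde{L})$ projectively related, so $\widetilde{\Gamma}=\Gamma-2\{\lambda J+d_{J}\lambda\otimes\mathcal{C}\}$, and hence --- by the first relation in (\ref{hv}) --- the horizontal projectors obey $\widetilde{h}=h-B$ with $B:=\lambda J+d_{J}\lambda\otimes\mathcal{C}$, a semibasic vector $1$-form on $TM$. Since $\mathfrak{R}=-\tfrac{1}{2}[h,h]$ and the Frölicher--Nijenhuis bracket is $\Real$-bilinear and symmetric on vector $1$-forms,
$$\widetilde{\mathfrak{R}}=-\tfrac{1}{2}[h-B,\,h-B]=-\tfrac{1}{2}[h,h]+[h,B]-\tfrac{1}{2}[B,B]=\mathfrak{R}+[h,B]-\tfrac{1}{2}[B,B],$$
so it suffices to prove $[h,B]-\tfrac{1}{2}[B,B]=\tfrac{1}{2}(2d_{h}\lambda-d_{J}\lambda^{2})\wedge J+d_{h}d_{J}\lambda\otimes\mathcal{C}$.

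Next I would split $B=\lambda J+d_{J}\lambda\otimes\mathcal{C}$ and expand $[h,B]$ and $[B,B]$ termwise, using the graded-Leibniz rule $[A,fX]=f[A,X]+d_{A}f\otimes X-df\wedge i_{X}A$ from \cite{r20} (together with its analogue when a vector $1$-form occupies the second slot), and simplifying by means of the structure identities available here: $hJ=0$, $Jh=J$, $h\mathcal{C}=0$, $J\mathcal{C}=0$, $i_{\mathcal{C}}J=0$, $[J,\mathcal{C}]=J$, $[h,\mathcal{C}]=0$ (homogeneity of the Barthel connection), $\mathcal{C}\cdot\lambda=\lambda$ (positive $1$-homogeneity of $\lambda$), $[J,J]=0$ (integrability of the natural almost-tangent structure), and --- crucially --- the vanishing torsion of the Barthel connection, which via $t=\tfrac{1}{2}[J,\Gamma]=\tfrac{1}{2}[J,2h-I]=[J,h]$ (using $[A,I]=0$ for a vector $1$-form $A$) gives $[J,h]=0$. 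The ``$J$-valued'' contributions come out cleanly: $[h,\lambda J]=d_{h}\lambda\wedge J$ (since $[h,J]=[J,h]=0$ and $i_{J}h=hJ=0$) and $[\lambda J,\lambda J]=2\lambda\,d_{J}\lambda\wedge J=d_{J}(\lambda^{2})\wedge J$, so that $[h,\lambda J]-\tfrac{1}{2}[\lambda J,\lambda J]=\tfrac{1}{2}(2d_{h}\lambda-d_{J}\lambda^{2})\wedge J$ is exactly the first term of the claim, any residual $J$-terms coming from the cross-brackets (via $[J,\mathcal{C}]=J$) cancelling out. The ``$\mathcal{C}$-valued'' contributions of $[h,d_{J}\lambda\otimes\mathcal{C}]$, $[\lambda J,d_{J}\lambda\otimes\mathcal{C}]$ and $[d_{J}\lambda\otimes\mathcal{C},d_{J}\lambda\otimes\mathcal{C}]$ then have to be shown to combine to $d_{h}d_{J}\lambda\otimes\mathcal{C}$; the term $d_{h}d_{J}\lambda\otimes\mathcal{C}$ is produced by $[h,d_{J}\lambda\otimes\mathcal{C}]$ and the remaining corrections cancel by virtue of $h\mathcal{C}=0$, $J\mathcal{C}=0$, $i_{\mathcal{C}}J=0$, $[h,\mathcal{C}]=0$ and $\mathcal{C}\cdot\lambda=\lambda$.

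Assembling the two parts gives $[h,B]-\tfrac{1}{2}[B,B]=\tfrac{1}{2}(2d_{h}\lambda-d_{J}\lambda^{2})\wedge J+d_{h}d_{J}\lambda\otimes\mathcal{C}$, and substituting back into $\widetilde{\mathfrak{R}}=\mathfrak{R}+[h,B]-\tfrac{1}{2}[B,B]$ yields the corollary. The main obstacle is not conceptual but computational: organizing the several Frölicher--Nijenhuis expansions and verifying that all the unwanted $\mathcal{C}$- and residual $J$-valued terms cancel; these cancellations rest squarely on $[J,h]=0$ and $[J,J]=0$ together with the homogeneity identities, after which the remaining manipulations are routine. (A homogeneity count --- $\mathfrak{R}$, $(2d_{h}\lambda-d_{J}\lambda^{2})\wedge J$ and $d_{h}d_{J}\lambda\otimes\mathcal{C}$ are all positively $1$-homogeneous in the fibre coordinate --- provides a convenient consistency check along the way.)
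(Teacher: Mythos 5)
Your proposal is correct and is essentially the computation the paper intends: the corollary is stated as an immediate consequence of Theorem \ref{th.3}(b), i.e.\ one substitutes $\widetilde{h}=h-\lambda J-d_{J}\lambda\otimes\mathcal{C}$ into $\widetilde{\mathfrak{R}}=-\tfrac{1}{2}[\widetilde{h},\widetilde{h}]$ and expands by Fr\"olicher--Nijenhuis calculus, exactly as you do. Your auxiliary identities ($[J,h]=0$ from vanishing torsion, $[h,\mathcal{C}]=0$ from homogeneity, $[J,\mathcal{C}]=J$, $[J,J]=0$, $\mathcal{C}\cdot\lambda=\lambda$) are the right ones and do yield $[h,B]-\tfrac{1}{2}[B,B]=\tfrac{1}{2}\left(2d_{h}\lambda-d_{J}\lambda^{2}\right)\wedge J+d_{h}d_{J}\lambda\otimes\mathcal{C}$, with the cross-bracket $[\lambda J,\,d_{J}\lambda\otimes\mathcal{C}]$ and $[d_{J}\lambda\otimes\mathcal{C},\,d_{J}\lambda\otimes\mathcal{C}]$ vanishing as you anticipate.
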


\begin{cor}\label{pp.t5}In view of Theorem \ref{th.3}\emph{(c)}, we have\vspace{-0.2cm}
\begin{description}
  \item[(a)] $ \widetilde{D}^{\circ}_{\gamma \overline{X}} \overline{Y}= D^{\circ} _{\gamma
  \overline{X}}\overline{Y}$.
  \item[(b)] $ \widetilde{D}^{\circ}_{\tilde{\beta} \overline{X}} \overline{Y}=D^{\circ}
_{\beta \overline{X}} \overline{Y} +
  \alpha (\overline{Y})\overline{X}+ \alpha(\overline{X})\overline{Y} +
  \stackrel{2}{D^{\circ}}\!\alpha(\overline{Y},\rho X)\overline{\eta}
  -\lambda D^{\circ} _{\gamma \overline{X}} \overline{Y} -\alpha(\overline{X})
  D^{\circ} _{\gamma \overline{\eta}} \overline{Y}$.
\end{description}
Consequently,
\begin{description}
    \item[(c)]  The  map  $D^{\circ}_{\gamma \overline{X}}: \mathfrak{X}(\pi (M)) \To \mathfrak{X} (\pi (M)):
  \overline{Y} \longmapsto D^{\circ}_{\gamma \overline{X}}\overline{Y}$  is a projective invariant.
     \item[(d)] The vector $\pi$-form $D^{\circ}\overline{X}:
     \mathfrak{X}(\pi (M)) \To \mathfrak{X} (\pi (M)):
  \overline{Y} \longmapsto D^{\circ}_{\gamma \overline{Y}}\overline{X}$
  is  a projective invariant.
\end{description}
\end{cor}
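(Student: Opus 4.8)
The plan is to derive Corollary \ref{pp.t5} as a direct consequence of Theorem \ref{th.3}(c), whose formula (\ref{eq.1}) is the single engine behind all four assertions. The only work required is to substitute the appropriate arguments for $X$ into (\ref{eq.1}), using the decompositions $\rho(\gamma\overline{X})=0$ and $\rho(\beta\overline{X})=\overline{X}$ together with Lemma \ref{pp.t2}, and then to re-express the result in terms of the barred (non-tilded) horizontal map via the relation $\widetilde{\beta}\overline{X}=\beta\overline{X}-\lambda\gamma\overline{X}-\alpha(\overline{X})\gamma\overline{\eta}$, which is exactly the formula for $\widetilde{h}X$ obtained in the proof of the implication (c)$\Longrightarrow$(a) of Theorem \ref{th.3}.

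For part (a), I would put $X=\gamma\overline{X}$ in (\ref{eq.1}). Then $\rho X=\rho(\gamma\overline{X})=0$, so the three correction terms $\alpha(\overline{Y})\rho X$, $\alpha(\rho X)\overline{Y}$, and $(\stackrel{2}{D^{\circ}}\!\alpha)(\overline{Y},\rho X)\overline{\eta}$ all vanish, leaving $\widetilde{D}^{\circ}_{\gamma\overline{X}}\overline{Y}=D^{\circ}_{\gamma\overline{X}}\overline{Y}$. For part (b), I would substitute $X=\widetilde{\beta}\overline{X}$ into (\ref{eq.1}) and then expand $\widetilde{\beta}\overline{X}$ using $\widetilde{\beta}\overline{X}=\beta\overline{X}-\lambda\gamma\overline{X}-\alpha(\overline{X})\gamma\overline{\eta}$. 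The left-hand side becomes $\widetilde{D}^{\circ}_{\widetilde{\beta}\overline{X}}\overline{Y}$; on the right-hand side, the main term $D^{\circ}_{\widetilde{\beta}\overline{X}}\overline{Y}$ splits as $D^{\circ}_{\beta\overline{X}}\overline{Y}-\lambda D^{\circ}_{\gamma\overline{X}}\overline{Y}-\alpha(\overline{X})D^{\circ}_{\gamma\overline{\eta}}\overline{Y}$, and the correction terms are evaluated using $\rho(\widetilde{\beta}\overline{X})=\rho(\beta\overline{X})=\overline{X}$ (since $\rho$ kills the vertical pieces), giving $\alpha(\overline{Y})\overline{X}+\alpha(\overline{X})\overline{Y}+(\stackrel{2}{D^{\circ}}\!\alpha)(\overline{Y},\rho X)\overline{\eta}$ with $\rho X=\overline{X}$. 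Collecting these terms yields precisely the stated formula in (b).

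Parts (c) and (d) are immediate readings of (a). Assertion (c) is just a restatement of (a): the operator $\overline{Y}\mapsto D^{\circ}_{\gamma\overline{X}}\overline{Y}$ is unchanged when $D^{\circ}$ is replaced by $\widetilde{D}^{\circ}$, hence projectively invariant. For (d), note that the vector $\pi$-form $D^{\circ}\overline{X}$ sends $\overline{Y}\mapsto D^{\circ}_{\gamma\overline{Y}}\overline{X}$; applying (a) with the roles of the arguments swapped ($\overline{X}\leftrightarrow\overline{Y}$) gives $\widetilde{D}^{\circ}_{\gamma\overline{Y}}\overline{X}=D^{\circ}_{\gamma\overline{Y}}\overline{X}$ for all $\overline{Y}$, so $\widetilde{D}^{\circ}\overline{X}=D^{\circ}\overline{X}$ as $\pi$-forms. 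No step here presents a genuine obstacle; the only mild care needed is bookkeeping in part (b) — correctly tracking which occurrences of $\beta$ and $\rho$ refer to the tilded versus the untilded connection, and remembering that $\rho$ annihilates the vertical correction terms so that $\rho(\widetilde{\beta}\overline{X})=\overline{X}$ without any projective correction. Everything else is a direct substitution into (\ref{eq.1}).
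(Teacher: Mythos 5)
Your proposal is correct and is exactly the argument the paper intends: the corollary is stated as an immediate consequence of Theorem \ref{th.3}(c), obtained by substituting $X=\gamma\overline{X}$ (where $\rho\gamma\overline{X}=0$) and $X=\widetilde{\beta}\overline{X}$ (expanded via $\widetilde{\beta}\overline{X}=\beta\overline{X}-\lambda\gamma\overline{X}-\alpha(\overline{X})\gamma\overline{\eta}$, i.e.\ the formula for $\widetilde{h}$ from the proof of Theorem \ref{th.3}) into (\ref{eq.1}), with (c) and (d) read off from (a). No discrepancy with the paper's approach.
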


\begin{rem} In view of Theorem \emph{\ref{th.3}}, we conclude that, a necessary and sufficient condition for two
Finsler manifolds $(M,L)$ and $(M,\widetilde{L})$  to be
projectively related is that Relation \emph{(\ref{eq.1})} holds.
This result  generalizes the corresponding result on projective
changes in Riemannian geometry. Apart from the last term  of formula
\emph{(\ref{eq.1})}, this formula resembles exactly the
corresponding Riemannian formula \emph{\cite{r83}}. Moreover, the
sufficiency is not proved before, as far as we know. On the other
hand, the local expressions of \emph{\textbf{(a)}} and
\emph{\textbf{(b)}} of Theorem \emph{\ref{th.3}} coincide with the
classical local expressions \emph{\cite{r42}}, \emph{\cite{r54}}.
\end{rem}

 For the projective change of the curvature tensors of
Berwald connection, we need the following two lemmas.
\vspace{-0.2cm}
\begin{lem}\emph{\cite{r96}}\label{pp.10} For the Berwald connection $D^{\circ}$, we have{\,\em:}  \vspace{-0.2cm}
\begin{description}
  \item[(a)]The v-curvature tensor $S^{\circ}$ vanishes.
  \item[(b)]The hv-curvature tensor $P^{\circ}$ is totally symmetric.
\end{description}
\end{lem}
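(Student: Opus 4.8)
The plan is to compute each curvature tensor directly from the definition $\textbf{K}(X,Y)\rho Z=-D_X D_Y\rho Z+D_Y D_X\rho Z+D_{[X,Y]}\rho Z$ specialized to $D^{\circ}$, to convert every covariant derivative into a Lie bracket by means of Theorem \ref{pp.3}, and then to let the Jacobi identity for vector fields on $\T M$ do the work. Throughout I use the splitting $I=\beta\circ\rho+\gamma\circ K$ (equivalently $\beta\circ\rho=h$, $\gamma\circ K=v$), the fact that $\rho$ annihilates vertical vectors, and the fact that the bracket of two vertical vector fields is again vertical.

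\textbf{Proof of (a).} Set $\overline{W}:=D^{\circ}_{\gamma\overline{Y}}\overline{Z}=\rho[\gamma\overline{X}',\beta\overline{Z}]$ with $\overline{X}'=\overline{Y}$, i.e. $\overline{W}=\rho[\gamma\overline{Y},\beta\overline{Z}]$ by Theorem \ref{pp.3}(a). Then $\beta\overline{W}=\beta\rho[\gamma\overline{Y},\beta\overline{Z}]=h[\gamma\overline{Y},\beta\overline{Z}]$, so $[\gamma\overline{X},\beta\overline{W}]$ differs from $[\gamma\overline{X},[\gamma\overline{Y},\beta\overline{Z}]]$ by $[\gamma\overline{X},v[\gamma\overline{Y},\beta\overline{Z}]]$, which is vertical and hence killed by $\rho$; thus $D^{\circ}_{\gamma\overline{X}}D^{\circ}_{\gamma\overline{Y}}\overline{Z}=\rho[\gamma\overline{X},[\gamma\overline{Y},\beta\overline{Z}]]$, and symmetrically for the second term. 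For the third term, $[\gamma\overline{X},\gamma\overline{Y}]$ is vertical, so $[\gamma\overline{X},\gamma\overline{Y}]=\gamma\overline{V}$ for a unique $\overline{V}$, and Theorem \ref{pp.3}(a) gives $D^{\circ}_{\gamma\overline{V}}\overline{Z}=\rho[\gamma\overline{V},\beta\overline{Z}]=\rho\,[[\gamma\overline{X},\gamma\overline{Y}],\beta\overline{Z}]$ (no need to identify $\overline{V}$). Summing, $S^{\circ}(\overline{X},\overline{Y})\overline{Z}=\rho\big(-[\gamma\overline{X},[\gamma\overline{Y},\beta\overline{Z}]]+[\gamma\overline{Y},[\gamma\overline{X},\beta\overline{Z}]]+[[\gamma\overline{X},\gamma\overline{Y}],\beta\overline{Z}]\big)$, and the parenthesis vanishes by the Jacobi identity.

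\textbf{Proof of (b).} The same mechanism applied to $\textbf{K}^{\circ}(\beta\overline{X},\gamma\overline{Y})\overline{Z}$, using both parts of Theorem \ref{pp.3} together with the identity $[\beta\overline{X},\gamma\overline{Y}]=\gamma\,D^{\circ}_{\beta\overline{X}}\overline{Y}-\beta\,D^{\circ}_{\gamma\overline{Y}}\overline{X}$ (which follows from $\rho[\beta\overline{X},\gamma\overline{Y}]=-D^{\circ}_{\gamma\overline{Y}}\overline{X}$, $K[\beta\overline{X},\gamma\overline{Y}]=D^{\circ}_{\beta\overline{X}}\overline{Y}$ and $I=\beta\rho+\gamma K$), expresses $P^{\circ}(\overline{X},\overline{Y})\overline{Z}$ in terms of iterated brackets of $\beta\overline{X},\gamma\overline{Y},\beta\overline{Z}$. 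Now a $\pi$-tensor of this valence is totally symmetric as soon as it is invariant under two distinct transpositions of its three arguments (these generate $S_{3}$), so it suffices to prove: (i) $P^{\circ}(\overline{X},\overline{Y})\overline{Z}=P^{\circ}(\overline{X},\overline{Z})\overline{Y}$, which after the bracket reduction collapses by the Jacobi identity once one feeds in the vanishing of the v-curvature established in (a) and the (automatic, by Theorem \ref{pp.3}(a)) vanishing of the mixed torsion $T^{\circ}$ of $D^{\circ}$; and (ii) $P^{\circ}(\overline{X},\overline{Y})\overline{Z}=P^{\circ}(\overline{Z},\overline{Y})\overline{X}$, which reduces to the vanishing of the $(h)h$-torsion $Q^{\circ}$ of $D^{\circ}$, i.e. $D^{\circ}_{\beta\overline{X}}\overline{Z}-D^{\circ}_{\beta\overline{Z}}\overline{X}=\rho[\beta\overline{X},\beta\overline{Z}]$, which is exactly the torsion-freeness of the Barthel connection (Theorem \ref{th.9a}). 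Combining (i) and (ii) yields total symmetry of $P^{\circ}$.

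\textbf{Main obstacle.} Part (a) is a short, self-contained Jacobi-identity computation requiring no extra input. The work is in part (b): one must push the iterated-bracket expression for $P^{\circ}$ through enough applications of the Jacobi identity, of the two relations of Theorem \ref{pp.3}, and of the splitting $I=\beta\rho+\gamma K$, and then recognize the surviving terms as $\rho$ applied to brackets governed by $S^{\circ}$ and by $Q^{\circ}$. The bookkeeping is lengthy but mechanical; the only non-formal ingredients are the already-proved identity $S^{\circ}=0$ (for symmetry (i)) and the vanishing of $Q^{\circ}$, i.e. the torsion-freeness of the Barthel connection (for symmetry (ii)), the latter being the single point where a small computation linking $[J,\Gamma]$ to $D^{\circ}$ is needed.
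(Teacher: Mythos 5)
First, note that the paper does not prove this lemma at all: it is quoted from the companion paper \cite{r96}, so there is no in-paper argument to compare yours against; I am judging your proposal on its own merits.

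Your part (a) is complete and correct: converting both iterated vertical derivatives into $\rho$ of iterated brackets (using $\beta\rho=h$, the integrability of the vertical distribution, and $\rho\circ\gamma=0$) and then invoking the Jacobi identity is exactly the right, self-contained argument. For part (b), the skeleton is sound --- two transpositions generate $S_{3}$, and your transposition (ii), $P^{\circ}(\overline{X},\overline{Y})\overline{Z}=P^{\circ}(\overline{Z},\overline{Y})\overline{X}$, does follow from the vanishing of the classical torsion of $D^{\circ}$ (equivalently from the first Bianchi identity with arguments $\beta\overline{X},\gamma\overline{Y},\beta\overline{Z}$, where the third curvature term dies because $\rho\gamma\overline{Y}=0$). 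The genuine gap is transposition (i). You assert that $P^{\circ}(\overline{X},\overline{Y})\overline{Z}=P^{\circ}(\overline{X},\overline{Z})\overline{Y}$ ``collapses by the Jacobi identity once one feeds in $S^{\circ}=0$ and $T^{\circ}=0$,'' but this transposition exchanges an argument that enters through $\gamma$ with one that enters through $\beta$, and the Bianchi/Jacobi mechanism gives nothing here: applied to $\beta\overline{X},\gamma\overline{Y},\gamma\overline{Z}$ it merely reproduces $S^{\circ}=0$. Concretely, the term $D^{\circ}_{\gamma\overline{Y}}D^{\circ}_{\beta\overline{X}}\overline{Z}=\rho[\gamma\overline{Y},\beta K[\beta\overline{X},\gamma\overline{Z}]]$ contains the composite $\beta\circ K$, which is neither $h$ nor $v$, and you never explain how this is turned into iterated Lie brackets in a way that exhibits the $\overline{Y}\leftrightarrow\overline{Z}$ symmetry. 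That symmetry is precisely the content of the lemma (locally, it is $\dot\partial_{l}G^{i}_{jk}=\dot\partial_{j}G^{i}_{lk}$, which rests on $G^{i}_{jk}=\dot\partial_{j}N^{i}_{k}$, i.e.\ on Theorem \ref{pp.3}(b) realizing the Berwald horizontal derivative as a vertical derivative of the Barthel connection), and it is not ``mechanical bookkeeping'': one must produce an intrinsic intermediate object whose second vertical derivative is $P^{\circ}$, or carry out the Fr\"{o}licher--Nijenhuis computation with $\Gamma=[J,G]$. Until that step is supplied, part (b) is a plausible plan rather than a proof.
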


\begin{lem}\label{le.1}~\par\vspace{-0.2cm}
\begin{description}
   \item[(a)]The $\pi$-form $\stackrel{2}{D^{\circ}}\alpha$  is symmetric
   and $(\stackrel{2}{D^{\circ}}\!\alpha)(\overline{\eta},
   \overline{X})=0$.
   \item[(b)]The $\pi$-form $\stackrel{2}{D^{\circ}}\stackrel{2}{D^{\circ}}\alpha$  is totaly symmetric
   and
    $(\stackrel{2}{D^{\circ}}\!\stackrel{2}{D^{\circ}}\alpha)(\overline{X},\overline{Y},\overline{\eta})=
    -(\stackrel{2}{D^{\circ}}\alpha)(\overline{X},\overline{Y})$.
\end{description}
\end{lem}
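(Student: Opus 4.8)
The plan is to deduce both assertions from the already-established structure of the Berwald connection, in particular from Lemma~\ref{pp.10} (vanishing of $S^{\circ}$, total symmetry of $P^{\circ}$), from Lemma~\ref{pp.t2}, and from the Ricci-type commutation formulas for the second vertical covariant derivative $\stackrel{2}{D^{\circ}}$. For part \textbf{(a)}, the symmetry of $\stackrel{2}{D^{\circ}}\!\alpha$ is the statement that
$(\stackrel{2}{D^{\circ}}\!\alpha)(\overline{X},\overline{Y})-(\stackrel{2}{D^{\circ}}\!\alpha)(\overline{Y},\overline{X})$
vanishes; I would expand this difference using the definition of $\stackrel{2}{D^{\circ}}$ and the fact that $\alpha(\overline{Z})=d_{J}\lambda(\beta\overline{Z})$, rewriting everything in terms of $dd_{J}\lambda$ evaluated on vertical arguments via Lemma~\ref{pp.t2}(a). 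Since $dd_{J}\lambda(\gamma\overline{X},\gamma\overline{Y})$ is manifestly antisymmetric while the connection terms are controlled by the $(h)hv$-torsion of $D^{\circ}$ (which is essentially the Cartan tensor for the Berwald case, but here enters only through $K^{\circ}\circ\gamma=0$), the antisymmetric part collapses; alternatively, and more cleanly, one notes that $\stackrel{2}{D^{\circ}}\!\alpha$ is built from $dd_{J}\lambda$ restricted to two vertical slots, and $i_{J}d_{J}\lambda=0$ together with $d^{2}=0$ forces this restriction to be symmetric. The second half of \textbf{(a)} is the contraction identity: setting $\overline{X}=\overline{\eta}$ amounts to computing $D^{\circ}_{\gamma\overline{\eta}}\alpha$ evaluated on $\overline{Y}$, which is exactly Lemma~\ref{pp.t2}(b), giving $0$.

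For part \textbf{(b)}, total symmetry of $\stackrel{2}{D^{\circ}}\!\stackrel{2}{D^{\circ}}\!\alpha$ in its three arguments: symmetry in the last two arguments is immediate from part \textbf{(a)} applied to the $\pi$-form $\stackrel{2}{D^{\circ}}\!\alpha$ in place of $\alpha$ — here I use that $\stackrel{2}{D^{\circ}}\!\alpha$ is itself a symmetric $\pi$-form obtained by vertical differentiation, so the same mechanism ($dd_{J}$ of a vertically-differentiated object, or the commutation of $\stackrel{2}{D^{\circ}}$ measured by $S^{\circ}=0$ from Lemma~\ref{pp.10}(a)) applies. The genuinely new symmetry to check is between the first and second arguments, i.e.\ that $(\stackrel{2}{D^{\circ}}\!\stackrel{2}{D^{\circ}}\!\alpha)(\overline{X},\overline{Y},\overline{Z})$ is unchanged when $\overline{X}\leftrightarrow\overline{Y}$; this follows by writing the difference as a vertical curvature term $S^{\circ}(\overline{X},\overline{Y})$ acting on $\alpha$, which vanishes by Lemma~\ref{pp.10}(a).

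For the contraction identity in \textbf{(b)}, the idea is to compute $(\stackrel{2}{D^{\circ}}\!\beta)$ of $\stackrel{2}{D^{\circ}}\!\alpha$ with the last argument set to $\overline{\eta}$, i.e.\ $(D^{\circ}_{\gamma\overline{Z}}(\stackrel{2}{D^{\circ}}\!\alpha))(\overline{X},\overline{\eta})$ read correctly as a covariant derivative of a tensor. Using the product rule,
$(D^{\circ}_{\gamma\overline{Z}}(\stackrel{2}{D^{\circ}}\!\alpha))(\overline{X},\overline{\eta})
=\gamma\overline{Z}\cdot(\stackrel{2}{D^{\circ}}\!\alpha)(\overline{X},\overline{\eta})
-(\stackrel{2}{D^{\circ}}\!\alpha)(D^{\circ}_{\gamma\overline{Z}}\overline{X},\overline{\eta})
-(\stackrel{2}{D^{\circ}}\!\alpha)(\overline{X},D^{\circ}_{\gamma\overline{Z}}\overline{\eta})$;
the first two terms vanish by the contraction identity from part \textbf{(a)}, and since $D^{\circ}_{\gamma\overline{Z}}\overline{\eta}=K^{\circ}(\gamma\overline{Z})=\overline{Z}$ (the deflection of the Berwald connection on a vertical vector is the identity, as $\gamma=v^{\circ}\circ\gamma$ and $K^{\circ}\circ\gamma=\mathrm{id}$), the remaining term is $-(\stackrel{2}{D^{\circ}}\!\alpha)(\overline{X},\overline{Z})$, which is exactly the claimed identity after relabelling. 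The main obstacle I anticipate is bookkeeping: keeping straight which slot of the iterated derivative is being held fixed versus differentiated, and invoking $K^{\circ}(\gamma\overline{Z})=\overline{Z}$ and $D^{\circ}_{\gamma\overline{X}}\overline{\eta}$ correctly — the geometry (via $S^{\circ}=0$ and Lemma~\ref{pp.t2}) does all the real work once the definitions are unwound carefully.
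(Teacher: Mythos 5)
Your proposal is correct and follows essentially the same route as the paper: in (a) the symmetry comes from expanding the difference and letting the torsion-free vertical structure (equivalently the $d_{J}$-calculus) kill the antisymmetric part, with the contraction identity being a restatement of Lemma \ref{pp.t2}(b), and in (b) the symmetry in the last two slots and the contraction follow from the product-rule formula together with $D^{\circ}_{\gamma\overline{X}}\overline{\eta}=K^{\circ}(\gamma\overline{X})=\overline{X}$, while the first-two-slots symmetry is precisely the vanishing of $S^{\circ}$ (Lemma \ref{pp.10}(a)), exactly as in the paper. Two parenthetical slips do not affect the argument but should be fixed: $K^{\circ}\circ\gamma$ is the identity, not zero (as you yourself use at the end), and $(\stackrel{2}{D^{\circ}}\!\alpha)(\overline{X},\overline{Y})=dd_{J}\lambda(\gamma\overline{X},\beta\overline{Y})$ involves one vertical and one horizontal slot rather than two vertical ones, the clean identity behind the symmetry being $i_{J}dd_{J}\lambda=d_{J}d_{J}\lambda=0$ (which uses $[J,J]=0$, not merely $d^{2}=0$).
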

\begin{proof}~\par
\noindent \textbf{(a)}  By definition of the Berwald vertical
covariant derivative,  we have
\begin{eqnarray*}
  && (\stackrel{2}{D^{\circ}}\alpha)(\overline{X},\overline{Y})
   -(\stackrel{2}{D^{\circ}}\alpha)(\overline{Y},\overline{X})=(D^{\circ}_{\gamma \overline{X}}\alpha)(\overline{Y})- (D^{\circ}_{\gamma \overline{Y}}\alpha)(\overline{X}) \\
 &=& \{\gamma \overline{X}\cdot(d_{J}\lambda(\beta \overline{Y}))
 -d_{J}\lambda(\beta D^{\circ}_{\gamma \overline{X}} \overline{Y})\}-
 \{\gamma \overline{Y}\cdot(d_{J}\lambda(\beta \overline{X}))
 -d_{J}\lambda(\beta D^{\circ}_{\gamma \overline{Y}} \overline{X})\} \\
 &=& \{\gamma \overline{X}\cdot(\gamma \overline{Y}\cdot\lambda)
 -\gamma D^{\circ}_{\gamma \overline{X}} \overline{Y}\cdot\lambda\}-
 \{\gamma \overline{Y}\cdot(\gamma \overline{X}\cdot\lambda)
 -\gamma D^{\circ}_{\gamma \overline{Y}} \overline{X}\cdot\lambda\}\\
 &=&\{[\gamma \overline{X},\gamma \overline{Y}]-\gamma (D^{\circ}_{\gamma \overline{X}} \overline{Y}-
  D^{\circ}_{\gamma \overline{Y}} \overline{X})\}\cdot \lambda  \ \ .
\end{eqnarray*}
 From which, together with the fact that $D^{\circ}$ is torsion free, it follows that $\stackrel{2}{D^{\circ}}\alpha$  is symmetric.
 \par
 On the other hand, $(\stackrel{2}{D^{\circ}}\alpha)(\overline{\eta},
   \overline{X})=0$ is a reformulation of  Lemma \ref{pp.t2}(b).

 \vspace{6pt}
 \noindent \textbf{(b)} By  \textbf{(a)} above and the formula
 \begin{eqnarray*}
   (\stackrel{2}{D^{\circ}}\stackrel{2}{D^{\circ}}\alpha)(\overline{X},\overline{Y},\overline{ Z})&=&
    \gamma \overline{X}\cdot\{(\stackrel{2}{D^{\circ}}\alpha)(\overline{Y},\overline{Z})\}-
    (\stackrel{2}{D^{\circ}}\alpha)(D^{\circ}_{\gamma \overline{X}}\overline{Y},\overline{ Z})-
    (\stackrel{2}{D^{\circ}}\alpha)(\overline{Y},D^{\circ}_{\gamma \overline{X}}\overline{
    Z}),
 \end{eqnarray*}
 it follows that
$\stackrel{2}{D^{\circ}}\stackrel{2}{D^{\circ}}\alpha$ is symmetric
with respect to the second  and the third arguments and
    $(\stackrel{2}{D^{\circ}}\!\stackrel{2}{D^{\circ}}\alpha)(\overline{X},\overline{Y},\overline{\eta})=
    -(\stackrel{2}{D^{\circ}}\alpha)(\overline{X},\overline{Y})$.
  Moreover, one can show that
\begin{eqnarray*}
   \mathfrak{U}_{\overline{X},\overline{Y}}\{(\stackrel{2}{D^{\circ}}\stackrel{2}{D^{\circ}}\alpha)
   (\overline{X},\overline{Y},\overline{ Z})\}&=&
   \{[\gamma \overline{X},\gamma \overline{Y}]-\gamma (D^{\circ}_{\gamma \overline{X}} \overline{Y}-
  D^{\circ}_{\gamma \overline{Y}} \overline{X})\}\cdot \alpha(\overline{Z})\\
  &&+\alpha(-D^{\circ}_{\gamma \overline{X}}
  D^{\circ}_{\gamma\overline{Y}}\overline{Z}+D^{\circ}_{\gamma \overline{Y}}
   D^{\circ}_{\gamma\overline{X}}\overline{Z}+D^{\circ}_{\gamma\{ D^{\circ}_{\gamma \overline{X}}\overline{Y}
   -D^{\circ}_{\gamma \overline{Y}}\overline{X}\}}\overline{Z})\\
  &=&\alpha(S^{\circ}(\overline{X},\overline{Y})\overline{Z})=0, \ \ \text{by Lemma \ref{pp.10}}.\vspace{-0.2cm}
   \end{eqnarray*}
   Hence, $\stackrel{2}{D^{\circ}}\stackrel{2}{D^{\circ}}\alpha$
   is symmetric with respect to the first  and the second arguments.
\end{proof}

\vspace{8pt}
\par Now, let us define \vspace{-0.2cm}
 \begin{equation}\label{Q and E}
\left.
    \begin{array}{rcl}
         Q(\overline{X})&:=&\beta \overline{X}\cdot \lambda-\lambda\,\alpha(\overline{X}),\\
         \varepsilon(\overline{X},\overline{Y})&:=&(D^{\circ}_{\gamma\overline{X}}Q)(\overline{Y})
   -(D^{\circ}_{\gamma\overline{Y}}Q)(\overline{X}).
\end{array}
  \right\}
\end{equation}

Using these $\pi$-tensor fields, we have\vspace{-0.2cm}
\begin{thm}\label{Th.6}   Under the projective change \emph{(\ref{GG})}, we have\vspace{-0.2cm}
\begin{description}
  \item[(a)]$\widetilde{R^{\circ}}(\overline{X},\overline{Y})\overline{Z}
   ={R}^{\circ}(\overline{X},\overline{Y})\overline{Z}+
    (D^{\circ}_{\gamma\overline{Z}}Q)(\overline{Y})\overline{X}
   -(D^{\circ}_{\gamma\overline{Z}}Q)(\overline{X})\overline{Y}+
   \varepsilon(\overline{X},\overline{Y})\overline{Z}
   +(D^{\circ}_{\gamma\overline{Z}}\varepsilon)(\overline{X},\overline{Y})\overline{\eta}$,
  \item[(b)]$ \widetilde{P}^{\circ}(\overline{X},\overline{Y})\overline{Z}
  ={P}^{\circ}(\overline{X},\overline{Y})\overline{Z}+
  \mathfrak{S}_{\overline{X},\overline{Y},\overline{Z}}
  \{((\stackrel{2}{D^{\circ}}\alpha)(\overline{Y},\overline{Z}))\overline{X}\}+
  (\stackrel{2}{D^{\circ}}\stackrel{2}{D^{\circ}}\alpha)(\overline{X},\overline{Y},
  \overline{Z})\overline{\eta}$,
\end{description}
where $\mathfrak{S}_{\overline{X},\overline{Y}, \overline{Z}}$
denotes the cyclic sum over $\overline{X},\overline{Y}$ and
$\overline{Z}$.
\end{thm}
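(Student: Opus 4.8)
The plan is to insert the relation between the two Berwald connections given by Theorem~\ref{th.3}(c) into the curvature operator and then simplify by means of the structural properties of $D^{\circ}$. Write $\widetilde{D}^{\circ}_{X}\overline{Y}=D^{\circ}_{X}\overline{Y}+\mathcal{B}(\rho X,\overline{Y})$ with $\mathcal{B}(\overline{X},\overline{Y}):=\alpha(\overline{Y})\overline{X}+\alpha(\overline{X})\overline{Y}+(\stackrel{2}{D^{\circ}}\!\alpha)(\overline{Y},\overline{X})\overline{\eta}$; by Lemma~\ref{le.1}(a) this $\mathcal{B}$ is a \emph{symmetric} $\pi$-tensor field. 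A direct computation from $\mathbf{K}(X,Y)\rho Z=-D_{X}D_{Y}\rho Z+D_{Y}D_{X}\rho Z+D_{[X,Y]}\rho Z$ then gives the standard transformation rule
\begin{align*}
\widetilde{\mathbf{K}}^{\circ}(X,Y)\rho Z = {}& \mathbf{K}^{\circ}(X,Y)\rho Z-(D^{\circ}_{X}\mathcal{B})(\rho Y,\rho Z)+(D^{\circ}_{Y}\mathcal{B})(\rho X,\rho Z)\\
& {}-\mathcal{B}(\mathbf{T}^{\circ}(X,Y),\rho Z)-\mathcal{B}(\rho X,\mathcal{B}(\rho Y,\rho Z))+\mathcal{B}(\rho Y,\mathcal{B}(\rho X,\rho Z)).
\end{align*}
I will use repeatedly that $D^{\circ}$ (hence $\widetilde{D}^{\circ}$) is torsion free, $\mathbf{T}^{\circ}=0$; that $S^{\circ}=0$ and $P^{\circ}$ is totally symmetric (Lemma~\ref{pp.10}); that $\widehat{P}^{\circ}=P^{\circ}(\cdot,\cdot)\overline{\eta}=0$; that $K^{\circ}\circ\beta=0$ and $K^{\circ}\circ\gamma=\mathrm{id}$, so $D^{\circ}_{\beta\overline{X}}\overline{\eta}=0$ and $D^{\circ}_{\gamma\overline{X}}\overline{\eta}=\overline{X}$; and, from the proof of Theorem~\ref{th.3}, that $\widetilde{\beta}\overline{X}=\beta\overline{X}-\lambda\gamma\overline{X}-\alpha(\overline{X})\mathcal{C}=\beta\overline{X}+\gamma\overline{V}_{X}$, $\overline{V}_{X}:=-\lambda\overline{X}-\alpha(\overline{X})\overline{\eta}$.

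For (b), evaluate the rule at $X=\widetilde{\beta}\overline{X}$ and $Y=\gamma\overline{Y}$. As $\rho(\gamma\overline{Y})=0$ and $\mathbf{T}^{\circ}=0$, only two terms survive, so $\widetilde{P}^{\circ}(\overline{X},\overline{Y})\overline{Z}=\mathbf{K}^{\circ}(\widetilde{\beta}\overline{X},\gamma\overline{Y})\overline{Z}+(D^{\circ}_{\gamma\overline{Y}}\mathcal{B})(\overline{X},\overline{Z})$, and inserting $\widetilde{\beta}\overline{X}=\beta\overline{X}+\gamma\overline{V}_{X}$ together with $S^{\circ}=0$ reduces the first summand to $P^{\circ}(\overline{X},\overline{Y})\overline{Z}$. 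Finally, expanding $(D^{\circ}_{\gamma\overline{Y}}\mathcal{B})(\overline{X},\overline{Z})$ from the definition of $\mathcal{B}$, using $D^{\circ}_{\gamma\overline{Y}}\overline{\eta}=\overline{Y}$, $(D^{\circ}_{\gamma\overline{Y}}\alpha)(\cdot)=(\stackrel{2}{D^{\circ}}\!\alpha)(\overline{Y},\cdot)$ and the symmetries of $\stackrel{2}{D^{\circ}}\!\alpha$ and $\stackrel{2}{D^{\circ}}\!\stackrel{2}{D^{\circ}}\!\alpha$ (Lemma~\ref{le.1}), that term becomes exactly $\mathfrak{S}_{\overline{X},\overline{Y},\overline{Z}}\{(\stackrel{2}{D^{\circ}}\!\alpha)(\overline{Y},\overline{Z})\overline{X}\}+(\stackrel{2}{D^{\circ}}\!\stackrel{2}{D^{\circ}}\!\alpha)(\overline{X},\overline{Y},\overline{Z})\overline{\eta}$, which is (b).

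For (a), expand $\widetilde{R}^{\circ}(\overline{X},\overline{Y})\overline{Z}=\widetilde{\mathbf{K}}^{\circ}(\beta\overline{X}+\gamma\overline{V}_{X},\beta\overline{Y}+\gamma\overline{V}_{Y})\overline{Z}$ into its four bilinear pieces. The $(\gamma,\gamma)$-piece is $\widetilde{S}^{\circ}(\overline{V}_{X},\overline{V}_{Y})\overline{Z}=0$. Each mixed piece, evaluated from the rule with $\mathbf{T}^{\circ}=0$, equals $P^{\circ}(\overline{X},\overline{V}_{Y})\overline{Z}-\lambda(D^{\circ}_{\gamma\overline{Y}}\mathcal{B})(\overline{X},\overline{Z})$, respectively its $(\overline{X},\overline{Y})$-transpose with a minus sign; these cancel because $P^{\circ}(\overline{X},\overline{V}_{Y})\overline{Z}=-\lambda P^{\circ}(\overline{X},\overline{Y})\overline{Z}$ (using $\widehat{P}^{\circ}=0$) and $(D^{\circ}_{\gamma\overline{Y}}\mathcal{B})(\overline{X},\overline{Z})$ are both symmetric in $\overline{X}\leftrightarrow\overline{Y}$ — here one also uses $D^{\circ}_{\gamma\overline{\eta}}\mathcal{B}=0$, a consequence of Lemma~\ref{le.1}. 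Hence $\widetilde{R}^{\circ}(\overline{X},\overline{Y})\overline{Z}=\widetilde{\mathbf{K}}^{\circ}(\beta\overline{X},\beta\overline{Y})\overline{Z}$, which by the rule (with $\mathbf{T}^{\circ}=0$) equals
\[
R^{\circ}(\overline{X},\overline{Y})\overline{Z}-(D^{\circ}_{\beta\overline{X}}\mathcal{B})(\overline{Y},\overline{Z})+(D^{\circ}_{\beta\overline{Y}}\mathcal{B})(\overline{X},\overline{Z})-\mathcal{B}(\overline{X},\mathcal{B}(\overline{Y},\overline{Z}))+\mathcal{B}(\overline{Y},\mathcal{B}(\overline{X},\overline{Z})).
\]

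It remains to expand these four correction terms and to recognise $Q$ and $\varepsilon$. Using the definition of $\mathcal{B}$, $D^{\circ}_{\beta\overline{X}}\overline{\eta}=0$, Lemmas~\ref{pp.t2} and~\ref{le.1}, and the key auxiliary identity $(D^{\circ}_{\gamma\overline{Z}}Q)(\overline{X})=(\stackrel{1}{D^{\circ}}\!\alpha)(\overline{X},\overline{Z})-\alpha(\overline{X})\alpha(\overline{Z})-\lambda(\stackrel{2}{D^{\circ}}\!\alpha)(\overline{Z},\overline{X})$ — which follows from $Q(\overline{X})=\beta\overline{X}\cdot\lambda-\lambda\,\alpha(\overline{X})$ and the bracket formula $[\gamma\overline{Y},\beta\overline{X}]=\beta D^{\circ}_{\gamma\overline{Y}}\overline{X}-\gamma D^{\circ}_{\beta\overline{X}}\overline{Y}$ read off from Theorem~\ref{pp.3} — the $\overline{X}$-, $\overline{Y}$- and $\overline{Z}$-coefficients collapse precisely to $(D^{\circ}_{\gamma\overline{Z}}Q)(\overline{Y})\overline{X}-(D^{\circ}_{\gamma\overline{Z}}Q)(\overline{X})\overline{Y}+\varepsilon(\overline{X},\overline{Y})\overline{Z}$ (it is convenient that $\varepsilon(\overline{X},\overline{Y})=(\stackrel{1}{D^{\circ}}\!\alpha)(\overline{Y},\overline{X})-(\stackrel{1}{D^{\circ}}\!\alpha)(\overline{X},\overline{Y})=-dd_{J}\lambda(\beta\overline{X},\beta\overline{Y})$). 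The $\overline{\eta}$-coefficient is what I expect to be the main obstacle: after the above cancellations it is the antisymmetrization in $(\overline{X},\overline{Y})$ of $(\stackrel{1}{D^{\circ}}\!\stackrel{2}{D^{\circ}}\!\alpha)(\cdot,\cdot,\overline{Z})$, and to identify it with $(D^{\circ}_{\gamma\overline{Z}}\varepsilon)(\overline{X},\overline{Y})$ I would invoke the Ricci (commutation) identity exchanging the horizontal and vertical covariant derivatives of the $1$-form $\alpha$ (equivalently of $\ell(\overline{X}):=\beta\overline{X}\cdot\lambda$, whose vertical derivative $\stackrel{2}{D^{\circ}}\!\ell$ is the transpose of $\stackrel{1}{D^{\circ}}\!\alpha$), noting that the commutator contributes only terms of the form $\alpha(P^{\circ}(\cdot,\cdot)\cdot)$, which vanish after antisymmetrization since $P^{\circ}$ is symmetric in its first two arguments (Lemma~\ref{pp.10}(b)). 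Apart from this step the computation is routine substitution, with every first-order term in $\lambda$ and $\alpha$ cancelling.
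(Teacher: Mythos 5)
Your proposal is correct and is essentially the computation the paper itself invokes: its proof of Theorem \ref{Th.6} is precisely the ``long, but easy, calculations'' based on Theorem \ref{th.3}(c), Lemma \ref{pp.10}, Lemma \ref{le.1} and the properties of $\alpha$, $Q$ and $\varepsilon$, which you carry out explicitly via the symmetric difference tensor $\mathcal{B}$, the change of horizontal map $\widetilde{\beta}\overline{X}=\beta\overline{X}-\lambda\gamma\overline{X}-\alpha(\overline{X})\mathcal{C}$, the auxiliary identity for $(D^{\circ}_{\gamma\overline{Z}}Q)(\overline{X})$, and the mixed Ricci identity whose $\alpha(P^{\circ}(\cdot,\cdot)\cdot)$ terms cancel by the total symmetry of $P^{\circ}$. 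All the steps you flag (including $\widehat{P}^{\circ}=0$, $D^{\circ}_{\gamma\overline{\eta}}\mathcal{B}=0$, and the identification of the $\overline{\eta}$-coefficient with $(D^{\circ}_{\gamma\overline{Z}}\varepsilon)(\overline{X},\overline{Y})$) check out, so there is no gap; note only that the cancellation of the curvature terms uses symmetry of $P^{\circ}$ in its first and third arguments, which Lemma \ref{pp.10}(b) indeed provides.
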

\begin{proof}
 After long, but easy, calculations, these formulae follow by using
 Theorem \ref{th.3}, Lemma \ref{pp.10}, Lemma \ref{le.1} and the properties of the
 $\pi$-forms $\alpha$, $Q$ and $\varepsilon$.
\end{proof}

\begin{cor}\label{cor.1} Under the projective change \emph{(\ref{GG})}, we have
 \begin{description}
   \item[(a)]${\widetilde{\widehat{R}^{\circ}}}(\overline{X},\overline{Y})
   ={\widehat{R}^{\circ}}(\overline{X},\overline{Y})+ Q(\overline{Y})\overline{X}
   -Q(\overline{X})\overline{Y}+\varepsilon(\overline{X},\overline{Y})\overline{\eta}$,
   \item[(b)]$\widetilde{H}(\overline{X})=H(\overline{X}) -Q(\overline{\eta})\overline{X}
   +\{Q(\overline{X})+\varepsilon(\overline{\eta},\overline{X})\}\overline{\eta}$,
    \end{description}
$H$ being the deviation tensor defined by
$H(\overline{X})\!:=\widehat{R}^{\circ}(\overline{\eta},\overline{X})$.
\end{cor}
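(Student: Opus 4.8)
The two formulas are straightforward specializations of Theorem \ref{Th.6}\textbf{(a)}, so the plan is to substitute $\overline{Z}=\overline{\eta}$ there and then dispose of the two terms that carry a vertical derivative along $\overline{\eta}$. First I would recall from $\S 1$ that $\widehat{R}^{\circ}(\overline{X},\overline{Y})=R^{\circ}(\overline{X},\overline{Y})\overline{\eta}$ and likewise $\widetilde{\widehat{R}^{\circ}}(\overline{X},\overline{Y})=\widetilde{R^{\circ}}(\overline{X},\overline{Y})\overline{\eta}$ (the fundamental $\pi$-vector field $\overline{\eta}$ is attached to $M$ alone, hence common to $L$ and $\widetilde{L}$). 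Putting $\overline{Z}=\overline{\eta}$ in the formula of Theorem \ref{Th.6}\textbf{(a)} gives
$$\widetilde{\widehat{R}^{\circ}}(\overline{X},\overline{Y})=\widehat{R}^{\circ}(\overline{X},\overline{Y})+(D^{\circ}_{\gamma\overline{\eta}}Q)(\overline{Y})\,\overline{X}-(D^{\circ}_{\gamma\overline{\eta}}Q)(\overline{X})\,\overline{Y}+\varepsilon(\overline{X},\overline{Y})\,\overline{\eta}+(D^{\circ}_{\gamma\overline{\eta}}\varepsilon)(\overline{X},\overline{Y})\,\overline{\eta},$$
so the only thing left is to show $(D^{\circ}_{\gamma\overline{\eta}}Q)(\overline{X})=Q(\overline{X})$ and $(D^{\circ}_{\gamma\overline{\eta}}\varepsilon)(\overline{X},\overline{Y})=0$.

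The key step is a homogeneity argument. Since $\lambda$ is positively homogeneous of degree $1$ in the directional argument, i.e. $\mathcal{C}\cdot\lambda=\lambda$, inspection of the defining relations (\ref{alfa}) and (\ref{Q and E}) shows that $\alpha$ is homogeneous of degree $0$, $Q$ of degree $1$, and $\varepsilon$ of degree $0$. On the other hand, because the Barthel connection is homogeneous (Theorem \ref{th.9a}), the horizontal lift of a $\pi$-vector field pulled back from $M$ is $\mathcal{C}$-invariant, so Theorem \ref{pp.3}\textbf{(a)} evaluated along $\overline{\eta}$ gives $D^{\circ}_{\gamma\overline{\eta}}\overline{X}=\rho[\mathcal{C},\beta\overline{X}]=0$ for such $\overline{X}$ — the same phenomenon that yields $D^{\circ}_{\gamma\overline{\eta}}\alpha=0$ in Lemma \ref{pp.t2}\textbf{(b)}. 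Since $D^{\circ}_{\gamma\overline{\eta}}Q$ and $D^{\circ}_{\gamma\overline{\eta}}\varepsilon$ are $\mathfrak{F}(\T M)$-linear in their $\pi$-arguments, one may test on pulled-back sections, for which $(D^{\circ}_{\gamma\overline{\eta}}Q)(\overline{X})=\mathcal{C}\cdot Q(\overline{X})=Q(\overline{X})$ and $(D^{\circ}_{\gamma\overline{\eta}}\varepsilon)(\overline{X},\overline{Y})=\mathcal{C}\cdot\varepsilon(\overline{X},\overline{Y})=0$ by the degrees just computed. Substituting these into the displayed identity proves \textbf{(a)}.

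Part \textbf{(b)} is then immediate: by definition $\widetilde{H}(\overline{X})=\widetilde{\widehat{R}^{\circ}}(\overline{\eta},\overline{X})$, so I would apply \textbf{(a)} with $\overline{X}$ replaced by $\overline{\eta}$ and $\overline{Y}$ by $\overline{X}$, use $\widehat{R}^{\circ}(\overline{\eta},\overline{X})=H(\overline{X})$, and group the two terms multiplying $\overline{\eta}$. The only point requiring genuine care is the homogeneity/tensoriality bookkeeping that annihilates (respectively reproduces) the $D^{\circ}_{\gamma\overline{\eta}}$-terms; everything else is pure substitution, so I anticipate no serious obstacle.
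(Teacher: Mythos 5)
Your proposal is correct and takes essentially the route the paper intends: Corollary \ref{cor.1}(a) is Theorem \ref{Th.6}(a) evaluated at $\overline{Z}=\overline{\eta}$, with the residual terms handled by the homogeneity facts $D^{\circ}_{\gamma\overline{\eta}}Q=Q$ (i.e.\ $Q$ is h(1)) and $D^{\circ}_{\gamma\overline{\eta}}\varepsilon=0$ (i.e.\ $\varepsilon$ is h(0)), which are exactly the properties of $Q$ and $\varepsilon$ the paper itself relies on (cf.\ Lemma \ref{def.1} and the remark that $\varepsilon$ is h(0) in the proof of Theorem \ref{weyl}), and part (b) is the further specialization $\overline{X}=\overline{\eta}$ together with the definition of $H$.
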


\begin{prop} Under the projective change \emph{(\ref{GG})},
 if the factor of projectivity $\lambda$ has the property that $
 Q=0$,  then the following geometric objects are projective
   invariants\emph{:}
\vspace{-0.2cm}
\begin{description}
  \item[(a)]The deviation tensor $H$,
  \item[(b)]The (v)h-torsion tensor $\widehat{R^{\circ}},$
  \item[(c)]The (h)h-curvature tensor $R^{\circ},$
  \item[(d)] The curvature tensor $\mathfrak{R}$ of  Barthel connection.
\end{description}
\end{prop}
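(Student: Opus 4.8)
The plan is to feed the hypothesis $Q=0$ into the transformation laws already established in \S2 and watch every correction term collapse. The first step is to record what $Q=0$ entails: since $Q$ is the zero $\pi$-form, its Berwald vertical covariant derivative vanishes identically, $\stackrel{2}{D^{\circ}}\!Q=0$; consequently $\varepsilon=0$ by its definition in \eqref{Q and E}, and likewise $\stackrel{2}{D^{\circ}}\!\varepsilon=0$ and $D^{\circ}_{\gamma\overline{Z}}Q=0$. Substituting $Q=0$, $\varepsilon=0$ and these vanishing derivatives into Corollary~\ref{cor.1}(a) gives $\widetilde{\widehat{R}^{\circ}}=\widehat{R}^{\circ}$, which is (b); into Corollary~\ref{cor.1}(b), using also $Q(\overline{\eta})=0$, gives $\widetilde{H}=H$, which is (a); and into Theorem~\ref{Th.6}(a) gives $\widetilde{R^{\circ}}=R^{\circ}$, which is (c). These three are thus pure substitution.

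The only part requiring a small computation is (d). Here I would start from the earlier Corollary $\widetilde{\mathfrak{R}}=\mathfrak{R}+\frac{1}{2}\left(2d_{h}\lambda-d_{J}\lambda^{2}\right)\wedge J+d_{h}d_{J}\lambda\otimes\mathcal{C}$ and show both correction terms vanish under $Q=0$. The crucial observation is that $Q=0$ is equivalent to the $1$-form identity $d_{h}\lambda=\lambda\,d_{J}\lambda$: by the definitions of $Q$ and $\alpha$ one has $Q(\overline{X})=d_{h}\lambda(\beta\overline{X})-\lambda\,d_{J}\lambda(\beta\overline{X})$, and both $d_{h}\lambda$ and $d_{J}\lambda$ are semi-basic (they vanish on vertical vectors, since $h\circ v=0$ and $J\circ v=0$), hence are determined by their values on horizontal lifts. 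Granting $d_{h}\lambda=\lambda\,d_{J}\lambda$: since $d_{J}$ is a derivation on functions, $d_{J}\lambda^{2}=2\lambda\,d_{J}\lambda$, so $\frac{1}{2}(2d_{h}\lambda-d_{J}\lambda^{2})=d_{h}\lambda-\lambda\,d_{J}\lambda=0$ and the first term drops out. For the second, apply $d_{J}$ to $d_{h}\lambda=\lambda\,d_{J}\lambda$ to obtain $d_{J}d_{h}\lambda=(d_{J}\lambda)\wedge(d_{J}\lambda)+\lambda\,d_{J}^{2}\lambda=0$, where $d_{J}^{2}=\frac{1}{2}d_{[J,J]}=0$ by integrability of the natural almost tangent structure; then $d_{h}d_{J}\lambda=-d_{J}d_{h}\lambda=0$ because $d_{h}d_{J}+d_{J}d_{h}=d_{[J,h]}$ and the Barthel connection has zero torsion, i.e.\ $[J,h]=0$ (Theorem~\ref{th.9a}). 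Hence $\widetilde{\mathfrak{R}}=\mathfrak{R}$. (Alternatively, (d) follows from (b) through the standard identification $\mathfrak{R}(\beta\overline{X},\beta\overline{Y})=\gamma\,\widehat{R}^{\circ}(\overline{X},\overline{Y})$, the maps $\rho$ and $\gamma$ being independent of the Finsler structure; either route is routine.)

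I expect the main, albeit mild, obstacle to be exactly this part (d): one must recognize the equivalence $Q=0\iff d_{h}\lambda=\lambda\,d_{J}\lambda$ between the $\pi$-form identity and a semi-basic $1$-form identity, and then use torsion-freeness of the Barthel connection together with $d_{J}^{2}=0$ to annihilate the term $d_{h}d_{J}\lambda\otimes\mathcal{C}$. Everything else amounts to observing that the vanishing of the $\pi$-form $Q$ propagates to $\varepsilon$ and to all the covariant derivatives occurring in Theorem~\ref{Th.6} and Corollary~\ref{cor.1}, after which (a), (b), (c) read off directly.
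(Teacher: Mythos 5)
Your proposal is correct, and for parts (a)--(c) it coincides with the paper's proof: the paper likewise just substitutes $Q=0$ (hence $\varepsilon=0$ and the vanishing of all their vertical covariant derivatives) into Theorem \ref{Th.6}(a) and Corollary \ref{cor.1}. Where you genuinely diverge is part (d). The paper deduces invariance of $\mathfrak{R}$ from part (b) via the identity $\mathfrak{R}(\beta\overline{X},\beta\overline{Y})=-\gamma\,\widehat{R^{\circ}}(\overline{X},\overline{Y})$ quoted from \cite{r96} (note the minus sign, which you dropped in your parenthetical alternative; it is harmless for the invariance argument, but for completeness one should also remark that $\widetilde{\beta}\overline{X}-\beta\overline{X}$ is vertical and $\widetilde{\mathfrak{R}}$ is semi-basic, so evaluating on $\beta$ rather than $\widetilde{\beta}$ costs nothing). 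Your primary route instead works directly in the Fr\"olicher--Nijenhuis calculus on the Corollary $\widetilde{\mathfrak{R}}=\mathfrak{R}+\frac{1}{2}\left(2d_{h}\lambda-d_{J}\lambda^{2}\right)\wedge J+d_{h}d_{J}\lambda\otimes\mathcal{C}$: the translation $Q=0\iff d_{h}\lambda=\lambda\,d_{J}\lambda$ (using semi-basicity of $d_h\lambda$ and $d_J\lambda$), then $d_{J}\lambda^{2}=2\lambda d_{J}\lambda$ kills the first correction, and $d_{J}^{2}=0$ together with $[J,h]=t=0$ for the Barthel connection kills $d_{h}d_{J}\lambda$. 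This computation is sound and has the merit of being self-contained within the Klein--Grifone machinery already present in Section 1, giving an independent check consistent with the Corollary on $\widetilde{\mathfrak{R}}$, whereas the paper's route is shorter but leans on the external identity from \cite{r96}; either argument establishes (d).
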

\begin{proof}
 The proof follows from Theorem \ref{Th.6}(a),  Corollary
\ref{cor.1}, together with the definition of $H$  and the identity
\cite{r96}
  \begin{equation*}
         \mathfrak{R}(\beta
\overline{X},\beta \overline{Y})=-\gamma
\widehat{R^{\circ}}(\overline{X}, \overline{Y}) \vspace{-0.9cm}.
  \end{equation*}
\end{proof}

\Section{Weyl  projective tensor}

Studying invariant geometric objects under a given change is of
particular  importance.   In this section,  we investigate
intrinsically the most important  invariant  tensor fields under a
projective change, namely, the projective deviation tensor,  the
Weyl torsion tensor and the Weyl curvature tensor. The properties of
these tensors and their interrelationships are investigated.

\vspace{7pt}
\par In  what follows and throughout, we make use the following convention.
If $A$ is a vector $\pi$-form of degree $3$, for example, we shall
write
$Tr^{c}_{\,\,\,\overline{Z}}\,\{A(\overline{X},\overline{Y},\overline{Z})\}$
to denote  the contracted trace \cite{r59} of $A$ with respect to
$\overline{Z}$:
$Tr^{c}_{\,\,\,\overline{Z}}\,\{A(\overline{X},\overline{Y},\overline{Z})\}:=Tr^{c}\{{\overline{Z}}\longmapsto
A(\overline{X},\overline{Y},\overline{Z})\}$.
\par
 It is to be noted that if a $\pi$-tensor field
$A$ of type (1,p) is projectively  invariant, then  so is its
contracted trace  $Tr^{c}(A)$.
\par
\vspace{7pt}
 Now, let us define\vspace{-0.2cm}
\begin{eqnarray}
  \theta (\overline{X},\overline{Y})&:=& Tr^{c}_{\,\,\,\overline{Z}}
  \,\{R^{\circ}(\overline{X},\overline{Y})\overline{Z}\}, \nonumber \\
  R_{2} (\overline{X},\overline{Y})&:=& Tr^{c}_{\,\,\,\overline{Z}}
  \,\{R^{\circ}(\overline{X},\overline{Z})\overline{Y}\}, \nonumber\\
   R_{1} (\overline{X})&:=&\frac{1}{n-1} \{nR_{2} (\overline{X},\overline{\eta})
   +R_{2} (\overline{\eta},\overline{X})\} \ ; \ n>2, \label{R1}\\
   k&:=&\frac{1}{n-1}R_{2} (\overline{\eta},\overline{\eta})\ ; \ n>2 \label{k}.
\end{eqnarray}
One can show, by using the identity \cite{r96}
$\mathfrak{S}_{\overline{X},\overline{Y},\overline{Z}}
\{R^{\circ}(\overline{X},\overline{Y})\overline{Z}\}=0$, that
\begin{equation}\label{VV}
\theta (\overline{X},\overline{Y})=R_{2}
(\overline{X},\overline{Y})-R_{2} (\overline{Y},\overline{X})
\end{equation}

The following lemma will be useful for subsequent
use.\vspace{-0.2cm}
 \begin{lem}\label{def.1} A $\pi$-tensor field $\omega$ is positively homogenous of degree r in the directional argument $y$
\emph{(}denoted by h\emph{(}r\emph{)}\emph{)} if, and only if
\vspace{-0.1cm}$$D^{\circ}_{\gamma \overline{\eta}}\, \omega= r
\omega, \,\,\text{or equivalently} \,\,\, D^{\circ}_{\mathcal{C}}\,
\omega= r \omega.$$
\end{lem}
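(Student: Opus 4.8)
The plan is to prove both directions of the equivalence by translating the homogeneity condition into the language of the Liouville vector field and the Berwald connection, exploiting that $D^{\circ}_{\gamma\overline{\eta}}$ is exactly differentiation along $\mathcal{C}$ composed with the vertical covariant structure. First I would recall Euler's theorem for homogeneous functions: a function $f$ on $\T M$ is h($r$) if and only if $\mathcal{C}\cdot f = r f$, equivalently $d_J f(\overline{\eta}\text{-type fields})$ reproduces the degree drop correctly. The $\pi$-tensor version should be obtained by writing $\omega$ in terms of its action on $\pi$-vector fields and reducing to the scalar case coordinate-freely.

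The key computation is the identity $\gamma\,\overline{\eta} = \mathcal{C}$, which is stated in the preliminaries, so $D^{\circ}_{\gamma\overline{\eta}}\omega = D^{\circ}_{\mathcal{C}}\omega$ is immediate once both sides are interpreted as the Berwald vertical covariant derivative in the direction $\mathcal{C}$; this gives the ``or equivalently'' clause for free. For the substantive equivalence, I would proceed as follows. For the forward direction, assume $\omega$ is h($r$). Apply the Berwald vertical covariant derivative along $\mathcal{C}$ to $\omega$ evaluated on arbitrary $\pi$-vector field arguments; by the Leibniz rule, $(D^{\circ}_{\mathcal{C}}\omega)(\overline{X}_1,\dots) = \mathcal{C}\cdot(\omega(\overline{X}_1,\dots)) - \sum_i \omega(\dots, D^{\circ}_{\mathcal{C}}\overline{X}_i,\dots)$. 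Choose the arguments to be basic (constant-along-fibres) $\pi$-vector fields so that the Berwald vertical derivative $D^{\circ}_{\gamma\overline{\eta}}\overline{X}_i$ of each argument is controlled --- in fact for a basic field $D^{\circ}_{\gamma\overline{\eta}}\overline{X}_i$ vanishes by Theorem \ref{pp.3}(a) since $[\gamma\overline{\eta},\beta\overline{X}_i]=[\mathcal{C},\beta\overline{X}_i]$ pushes down to $0$ under $\rho$ when $\overline{X}_i$ is basic. Then $(D^{\circ}_{\gamma\overline{\eta}}\omega)(\overline{X}_1,\dots) = \mathcal{C}\cdot(\omega(\overline{X}_1,\dots))$, and since $\omega(\overline{X}_1,\dots)$ is an h($r$) function (as $\omega$ is h($r$) and the basic arguments are h($0$)), Euler's theorem gives this equals $r\,\omega(\overline{X}_1,\dots)$, proving $D^{\circ}_{\gamma\overline{\eta}}\omega = r\omega$.

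For the converse, assume $D^{\circ}_{\gamma\overline{\eta}}\omega = r\omega$. Running the same Leibniz computation backwards with basic arguments shows $\mathcal{C}\cdot(\omega(\overline{X}_1,\dots)) = r\,\omega(\overline{X}_1,\dots)$ for all basic $\pi$-vector fields; since at each point $u\in\T M$ the values of basic fields span the fibre, the scalar function $\omega(\overline{X}_1,\dots)$ is h($r$) for every choice of basic arguments, and this is precisely the statement that the tensor $\omega$ is h($r$). The one point requiring care --- and the main obstacle --- is justifying that it suffices to test on basic fields and that $D^{\circ}_{\gamma\overline{\eta}}$ annihilates them; this hinges on the structure of the Berwald connection from Theorem \ref{pp.3} and the fact that $[\mathcal{C},\beta\overline{X}]$ is $\gamma$ of something (i.e.\ vertical) modulo the horizontal lift of $\overline{X}$ when $\overline{X}$ is basic, together with homogeneity of the Barthel connection (Theorem \ref{th.9a}) which guarantees $[\mathcal{C},h\cdot]$ behaves homogeneously. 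I would handle this by a direct bracket computation rather than appealing to coordinates, but if pressed for brevity one can simply note the identity $D^{\circ}_{\gamma\overline{\eta}}\overline{X} = \rho[\mathcal{C},\beta\overline{X}] = 0$ for basic $\overline{X}$ follows from the homogeneity of $\beta$ of degree $0$, which is standard.
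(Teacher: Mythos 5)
Your proposal is essentially correct, but there is nothing in the paper to compare it against: the authors state this lemma without proof, treating it as a known characterization of positive homogeneity (it is standard in the Finsler literature and in their companion papers), so you have supplied the argument the paper omits. Your route --- reduce to components by evaluating on basic $\pi$-vector fields, use $D^{\circ}_{\gamma\overline{\eta}}\overline{X}=\rho[\mathcal{C},\beta\overline{X}]=0$ for basic $\overline{X}$, and invoke Euler's theorem for the resulting scalar functions --- is the standard one and is sound. Three refinements. First, the vanishing $\rho[\mathcal{C},\beta\overline{X}]=0$ does not in fact need the homogeneity of the Barthel connection: $\mathcal{C}$ is $\pi$-related to the zero vector field on $M$ and $\beta\overline{X}$ is $\pi$-related to $X$ when $\overline{X}$ is basic, so $[\mathcal{C},\beta\overline{X}]$ is $\pi$-related to $[0,X]=0$, i.e.\ vertical, and $\rho$ annihilates it; homogeneity of $\Gamma$ gives the stronger fact $[\mathcal{C},\beta\overline{X}]=0$, which your argument does not actually require. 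Second, since the lemma concerns an arbitrary $\pi$-tensor field, possibly vector-valued (type $(1,p)$), the Leibniz rule carries an additional term $D^{\circ}_{\mathcal{C}}$ applied to the value $\omega(\overline{X}_1,\dots,\overline{X}_p)$; this is handled by expanding that value in a basic frame and applying the scalar case to the component functions, so your scheme extends, but it should be said explicitly rather than left implicit. Third, in the forward direction you should note (as you do in the converse) that it suffices to check $D^{\circ}_{\mathcal{C}}\omega=r\omega$ on basic arguments because both sides are $\mathfrak{F}(\T M)$-multilinear and the values of basic fields span each fibre of $\pi^{-1}(TM)$. With these points made explicit, the proof is complete.
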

In view of the above lemma, we have{\,\em:}
\begin{prop}\label{p.homog.} ~\par \vspace{-0.2cm}
\begin{description}

  \item[(a)]The hv-curvature tensor $P^{\circ}$ is homogenous of degree -1.

    \item[(b)]The h-curvature tensor $R^{\circ}$ is homogenous of degree
    0.
    \item[(c)]The (v)h-torsion tensor $\widehat{R}^{\circ}$ is homogenous of degree
    1.
   \item[(d)]The deviation tensor $H$ is homogenous of degree 2.

 \item[(e)]The $\pi$-tensor fields $R_{2}$ and $\theta $ are homogenous of degree 0.

\item[(f)]The $\pi$-tensor field $R_{1}$ is homogenous of degree 1.

\item[(g)]The scalar function  $k$ is homogenous of degree 2.
\end{description}
\end{prop}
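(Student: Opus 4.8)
The plan is to deduce every item of the Proposition from Lemma~\ref{def.1}, which reduces the statement ``$\pi$-tensor field $\omega$ is $h(r)$'' to the single equation $D^{\circ}_{\gamma\overline{\eta}}\,\omega=r\,\omega$. Two standing observations then make all the bookkeeping routine: (i) $D^{\circ}_{\gamma\overline{\eta}}$ is a derivation of the tensor algebra of $\pi^{-1}(TM)$ which commutes with every contraction, in particular with the contracted trace $Tr^{c}$; and (ii) $\overline{\eta}$ is itself $h(1)$, since $D^{\circ}_{\gamma\overline{\eta}}\,\overline{\eta}=K^{\circ}(\gamma\overline{\eta})=\overline{\eta}$. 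Items \textbf{(a)} and \textbf{(b)} carry the genuine content; everything after them follows from (i), (ii) and the identities $\widehat{R}^{\circ}=R^{\circ}(\cdot,\cdot)\overline{\eta}$, $H=\widehat{R}^{\circ}(\overline{\eta},\cdot)$, $R_{2}=Tr^{c}(R^{\circ})$ and so on.

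For \textbf{(a)} and \textbf{(b)} I would argue from the explicit description of the Berwald connection (Theorem~\ref{pp.3}) together with the homogeneity of the canonical spray $G$ (degree $2$, i.e.\ $[\mathcal{C},G]=G$), which makes the Barthel connection $\Gamma=[J,G]$ homogeneous and hence $[\mathcal{C},\beta\overline{X}]$ horizontal. Writing $R^{\circ}(\overline{X},\overline{Y})\overline{Z}=\mathbf{K}^{\circ}(\beta\overline{X},\beta\overline{Y})\overline{Z}$ and differentiating along $\mathcal{C}=\gamma\overline{\eta}$ via the Jacobi identity, the only term that could obstruct $D^{\circ}_{\gamma\overline{\eta}}R^{\circ}=0$ involves the commutator $[D^{\circ}_{\gamma\overline{\eta}},D^{\circ}_{\beta\overline{X}}]$, governed by $\mathbf{K}^{\circ}(\beta\overline{X},\gamma\overline{\eta})=P^{\circ}(\overline{X},\overline{\eta})$; but $P^{\circ}(\overline{X},\overline{\eta})\overline{Z}=P^{\circ}(\overline{X},\overline{Z})\overline{\eta}=\widehat{P}^{\circ}(\overline{X},\overline{Z})$, which vanishes by the total symmetry of $P^{\circ}$ (Lemma~\ref{pp.10}) and the fact that $\widehat{P}^{\circ}=0$ for the Berwald connection (immediate from Theorem~\ref{pp.3}). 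Hence $D^{\circ}_{\gamma\overline{\eta}}R^{\circ}=0$, which is \textbf{(b)}; the parallel computation with one horizontal argument replaced by $\gamma\overline{Y}$ carries one extra vertical differentiation and gives $D^{\circ}_{\gamma\overline{\eta}}P^{\circ}=-P^{\circ}$, which is \textbf{(a)} (alternatively \textbf{(a)} and \textbf{(b)} may simply be quoted from \cite{r96}, already cited for Lemma~\ref{pp.10}). This is the step I expect to require the most care; I note, however, that \textbf{(a)} is not needed below --- items \textbf{(c)}--\textbf{(g)} rest only on \textbf{(b)}.

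Given \textbf{(b)} and observations (i), (ii), the remaining items are immediate. For \textbf{(c)}, since $D^{\circ}_{\gamma\overline{\eta}}$ commutes with the contraction defining $\widehat{R}^{\circ}$,
$$(D^{\circ}_{\gamma\overline{\eta}}\widehat{R}^{\circ})(\overline{X},\overline{Y})=(D^{\circ}_{\gamma\overline{\eta}}R^{\circ})(\overline{X},\overline{Y})\,\overline{\eta}+R^{\circ}(\overline{X},\overline{Y})\,(D^{\circ}_{\gamma\overline{\eta}}\overline{\eta})=0+\widehat{R}^{\circ}(\overline{X},\overline{Y}),$$
so $\widehat{R}^{\circ}$ is $h(1)$; applying the same identity to $H(\overline{X})=\widehat{R}^{\circ}(\overline{\eta},\overline{X})$ gives $D^{\circ}_{\gamma\overline{\eta}}H=\widehat{R}^{\circ}(\overline{\eta},\cdot)+\widehat{R}^{\circ}(\overline{\eta},\cdot)=2H$, which is \textbf{(d)}. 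For \textbf{(e)}, $R_{2}$ and $\theta$ are contracted traces of $R^{\circ}$ and $Tr^{c}$ commutes with $D^{\circ}_{\gamma\overline{\eta}}$, so they inherit degree $0$ from \textbf{(b)} (for $\theta$ one may instead use (\ref{VV})). Finally, $R_{1}$ is a fixed linear combination of $R_{2}(\overline{X},\overline{\eta})$ and $R_{2}(\overline{\eta},\overline{X})$, each obtained from the $h(0)$ tensor $R_{2}$ by inserting one copy of the $h(1)$ field $\overline{\eta}$, whence $D^{\circ}_{\gamma\overline{\eta}}R_{1}=R_{1}$ and \textbf{(f)} follows; and $k=\tfrac{1}{n-1}R_{2}(\overline{\eta},\overline{\eta})$ carries two such insertions, so $D^{\circ}_{\gamma\overline{\eta}}k=2k$, which is \textbf{(g)}.
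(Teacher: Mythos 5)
Your proposal is correct and follows the route the paper intends: the paper states this proposition with no explicit proof, merely prefacing it with ``In view of the above lemma,'' and your argument is precisely the routine verification via Lemma~\ref{def.1} (using that $D^{\circ}_{\gamma\overline{\eta}}$ is a derivation commuting with traces and that $D^{\circ}_{\gamma\overline{\eta}}\overline{\eta}=\overline{\eta}$). Items \textbf{(a)} and \textbf{(b)} are indeed standard Berwald-connection facts which, as you note, may simply be quoted from \cite{r96}, and your derivation of \textbf{(c)}--\textbf{(g)} by inserting the $h(1)$ field $\overline{\eta}$ and contracting is sound.
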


Now, we are in a position to announce the main result of this
section.\vspace{-0.2cm}
\begin{thm}\label{weyl}  Under the projective change \emph{(\ref{GG})}, the following
tensor field on $\p$; $\dim M >2$, is projectively invariant\emph{:}
\begin{eqnarray*}
  W(\overline{X},\overline{Y}) \overline{Z}&:=&
 R^{\circ}(\overline{X},\overline{Y}) \overline{Z}+\frac{1}{n+1}
 \mathfrak{U}_{\overline{X},\overline{Y}}\{(\stackrel{2}{D^{\circ}}R_{1})(\overline{Z},\overline{Y})\overline{X}\\
 &&+ (\stackrel{2}{D^{\circ}}R_{1})(\overline{X},\overline{Y})\overline{Z}+
(\stackrel{2}{D^{\circ}}
\stackrel{2}{D^{\circ}}R_{1})(\overline{Z},\overline{X},\overline{Y})\overline{\eta}\},
\end{eqnarray*}
where\,
$\mathfrak{U}_{\overline{X},\overline{Y}}\{A(\overline{X},\overline{Y})\}:
=A(\overline{X},\overline{Y})-A(\overline{Y},\overline{X})$.
  \end{thm}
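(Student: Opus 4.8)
The plan is to reduce everything to the transformation formula for $R^{\circ}$ given in Theorem \ref{Th.6}(a) and to choose the correction terms in $W$ precisely so as to cancel the non-invariant parts. First I would record how the lower-order $\pi$-forms $\theta$, $R_{2}$, $R_{1}$ and $k$ transform under \emph{(\ref{GG})}. Applying the contracted trace $Tr^{c}_{\,\,\,\overline{Z}}$ to Theorem \ref{Th.6}(a), and using that $Tr^{c}$ commutes with the projectively invariant operator $D^{\circ}_{\gamma\overline{\,\cdot\,}}$ (Corollary \ref{pp.t5}(c)) together with Lemma \ref{le.1}, one obtains explicit formulas for $\widetilde{\theta}$ and $\widetilde{R_{2}}$ in terms of $Q$, $\varepsilon$ and their vertical covariant derivatives; from (\ref{R1}) and (\ref{k}) these descend to $\widetilde{R_{1}}$ and $\widetilde{k}$. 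The key expectation — which is exactly why the coefficient $\tfrac{1}{n+1}$ appears — is that $\widetilde{R_{1}}$ has the shape $R_{1}+ (n+1)\bigl(\text{terms built linearly from } Q \text{ and } \alpha\bigr)$, so that $\tfrac{1}{n+1}(\stackrel{2}{D^{\circ}}\widetilde{R_{1}} - \stackrel{2}{D^{\circ}}R_{1})$ reproduces precisely the $Q$- and $\varepsilon$-contributions appearing in Theorem \ref{Th.6}(a).

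Next I would substitute the transformation formulas for $R^{\circ}$, $\stackrel{2}{D^{\circ}}R_{1}$ and $\stackrel{2}{D^{\circ}}\stackrel{2}{D^{\circ}}R_{1}$ into the defining expression for $\widetilde{W}(\overline{X},\overline{Y})\overline{Z}$ and expand. The three groups of terms to track are: (i) the $\overline{X}$- and $\overline{Y}$-coefficient terms $(D^{\circ}_{\gamma\overline{Z}}Q)(\overline{Y})\overline{X}-(D^{\circ}_{\gamma\overline{Z}}Q)(\overline{X})\overline{Y}$ from Theorem \ref{Th.6}(a), which should be matched by $\tfrac{1}{n+1}\mathfrak{U}_{\overline{X},\overline{Y}}\{(\stackrel{2}{D^{\circ}}(\widetilde{R_{1}}-R_{1}))(\overline{Z},\overline{Y})\overline{X}\}$; (ii) the $\overline{Z}$-coefficient term $\varepsilon(\overline{X},\overline{Y})\overline{Z}$, matched by the $(\stackrel{2}{D^{\circ}}(\widetilde{R_{1}}-R_{1}))(\overline{X},\overline{Y})\overline{Z}$ piece (here one uses that $\varepsilon$ is, up to a factor, the skew part of $\stackrel{2}{D^{\circ}}Q$, which by Lemma \ref{le.1}(a) and (\ref{Q and E}) relates to $\stackrel{2}{D^{\circ}}R_{1}$); and (iii) the $\overline{\eta}$-coefficient term $(D^{\circ}_{\gamma\overline{Z}}\varepsilon)(\overline{X},\overline{Y})\overline{\eta}$, matched by the $(\stackrel{2}{D^{\circ}}\stackrel{2}{D^{\circ}}(\widetilde{R_{1}}-R_{1}))(\overline{Z},\overline{X},\overline{Y})\overline{\eta}$ piece, where Lemma \ref{le.1}(b) (total symmetry and the $\overline{\eta}$-contraction identity) is what makes the arguments line up. Homogeneity bookkeeping (Proposition \ref{p.homog.} and Lemma \ref{def.1}) is used throughout to handle terms where $\overline{\eta}$ is inserted.

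The main obstacle I anticipate is step (iii): showing that the second vertical covariant derivative of the $R_{1}$-correction produces exactly $(D^{\circ}_{\gamma\overline{Z}}\varepsilon)(\overline{X},\overline{Y})\overline{\eta}$ with the right sign and no leftover terms. This requires commuting $\stackrel{2}{D^{\circ}}$ past the contracted trace, invoking the vanishing of the $v$-curvature $S^{\circ}$ (Lemma \ref{pp.10}(a)) so that no curvature correction appears when the vertical derivatives are reordered, and using the identity $(\stackrel{2}{D^{\circ}}\stackrel{2}{D^{\circ}}\alpha)(\overline{X},\overline{Y},\overline{\eta})=-(\stackrel{2}{D^{\circ}}\alpha)(\overline{X},\overline{Y})$ from Lemma \ref{le.1}(b) to absorb the $\overline{\eta}$-slot. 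Once these cancellations are verified, collecting terms gives $\widetilde{W}(\overline{X},\overline{Y})\overline{Z}=W(\overline{X},\overline{Y})\overline{Z}$, which is the assertion. The computation is lengthy but, after the transformation formulas for $R_{1}$ and $\varepsilon$ are in hand, entirely mechanical; I would organize it by separating the $\overline{X},\overline{Y},\overline{\eta}$ coefficients and checking each independently.
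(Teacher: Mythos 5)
Your proposal is correct and rests on the same pillars as the paper's proof --- Theorem \ref{Th.6}(a), Corollary \ref{cor.1}(a), contracted traces together with the h(0)-homogeneity of $\varepsilon$ to solve for $Q$ and $\varepsilon$ in terms of $R_{1}-\widetilde{R}_{1}$ (the paper's (\ref{EQ.6}) and (\ref{EQ.4}), which confirm your ``key expectation'' in the exact form $\widetilde{R}_{1}=R_{1}-(n+1)Q$), and Corollary \ref{pp.t5} to identify $\stackrel{2}{\widetilde{D}^{\circ}}$ with $\stackrel{2}{D^{\circ}}$ --- but the final assembly differs. The paper substitutes these expressions back into the torsion-level relation (\ref{EQ.1}), first obtaining the invariance of the Weyl torsion tensor (equation (\ref{h1})), and then gets $\widetilde{W}=W$ in one stroke by applying the projectively invariant vertical covariant derivative together with the identity (\ref{eq.x2}), $R^{\circ}(\overline{X},\overline{Y})\overline{Z}=(D^{\circ}_{\gamma\overline{Z}}\widehat{R^{\circ}})(\overline{X},\overline{Y})$; this route costs nothing extra and yields the invariance of $W_{2}$ (Theorem \ref{weyl1}(b)) as a by-product. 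You instead expand $\widetilde{W}$ directly and cancel coefficient by coefficient, which also works; in fact your step (iii) is easier than you anticipate, since $\varepsilon$ is by definition the skew part of $\stackrel{2}{D^{\circ}}Q$, so the $\overline{\eta}$-terms cancel simply because $\mathfrak{U}_{\overline{X},\overline{Y}}\{(\stackrel{2}{D^{\circ}}\stackrel{2}{D^{\circ}}Q)(\overline{Z},\overline{X},\overline{Y})\}=(D^{\circ}_{\gamma\overline{Z}}\varepsilon)(\overline{X},\overline{Y})$ with no reordering of vertical derivatives, hence no appeal to $S^{\circ}=0$ or to Lemma \ref{le.1}(b) is needed at that point (those facts enter earlier, in Theorem \ref{Th.6} and the trace computations). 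The only part of your outline that must be carried out with real care is the derivation of $\widetilde{R}_{1}-R_{1}=-(n+1)Q$ itself: as in the paper, it requires combining the $\overline{Y}$-trace of (\ref{EQ.1}) with the $\overline{Z}$-trace of Theorem \ref{Th.6}(a) via (\ref{VV}), the definition (\ref{R1}), and the vanishing of $D^{\circ}_{\gamma\overline{\eta}}\varepsilon$, which your homogeneity bookkeeping covers.
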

\begin{proof}  We have, by Corollary \ref{cor.1}(a),

\begin{equation}\label{EQ.1}
    {\widetilde{{R}^{\circ}}}(\overline{X},\overline{Y})\overline{\eta}
   ={{R}^{\circ}}(\overline{X},\overline{Y})\overline{\eta}+ Q(\overline{Y})\overline{X}
   -Q(\overline{X})\overline{Y}+\varepsilon(\overline{X},\overline{Y})\overline{\eta}.
\end{equation}
Taking the contracted trace of (\ref{EQ.1}) with respect to
$\overline{Y}$, we get
\begin{equation}\label{EQ.2}
    \widetilde{R}_{2}(\overline{X}, \overline{\eta})
   ={R}_{2}(\overline{X},\overline{\eta}) -(n-1)Q(\overline{X})
   +\varepsilon(\overline{X},\overline{\eta}).
\end{equation}
On the other hand, by Theorem \ref{Th.6}(a),
\begin{equation}\label{EQ.3}
    \widetilde{R^{\circ}}(\overline{X},\overline{Y})\overline{Z}
   ={R}^{\circ}(\overline{X},\overline{Y})\overline{Z}+
    (D^{\circ}_{\gamma\overline{Z}}Q)(\overline{Y})\overline{X}
   -(D^{\circ}_{\gamma\overline{Z}}Q)(\overline{X})\overline{Y}+
   \varepsilon(\overline{X},\overline{Y})\overline{Z}
   +(D^{\circ}_{\gamma\overline{Z}}\varepsilon)(\overline{X},\overline{Y})\overline{\eta}.
\end{equation}
Taking the contracted trace of (\ref{EQ.3}) with respect to
$\overline{Z}$,  we obtain
\begin{eqnarray*}
 \widetilde{\theta}(\overline{X},\overline{Y})
   &=&\theta(\overline{X},\overline{Y})+
    (D^{\circ}_{\gamma\overline{X}}Q)(\overline{Y})
   -(D^{\circ}_{\gamma\overline{Y}}Q)(\overline{X})+n
   \varepsilon(\overline{X},\overline{Y})
   +(D^{\circ}_{\gamma\overline{\eta}}\varepsilon)(\overline{X},\overline{Y}).
\end{eqnarray*}
Since the $\pi$-form $\varepsilon$ is h(0), as one can easily show,
the above relation reduces to
\begin{eqnarray*}
 \widetilde{\theta}(\overline{X},\overline{Y})
     &=&\theta(\overline{X},\overline{Y})+(n+1)\varepsilon(\overline{X},\overline{Y}).
\end{eqnarray*}
Consequently, by (\ref{VV}),
\begin{equation*}
\varepsilon(\overline{X},\overline{Y})=\frac{1}{(n+1)}\{\widetilde{R}_{2}(\overline{X},\overline{Y})
-\widetilde{R}_{2}(\overline{Y},\overline{X})\}-\frac{1}{(n+1)}\{R_{2}(\overline{X},\overline{Y})-
R_{2}(\overline{Y},\overline{X})\}.
\end{equation*}
From which,
\begin{equation}\label{EQ.5}
\varepsilon(\overline{X},\overline{\eta})=\frac{1}{(n+1)}\{\widetilde{R}_{2}(\overline{X},\overline{\eta})
-\widetilde{R}_{2}(\overline{\eta},\overline{X})\}-\frac{1}{(n+1)}\{R_{2}(\overline{X},\overline{\eta})-
R_{2}(\overline{\eta},\overline{X})\}.
\end{equation}
Solving (\ref{EQ.2}) and (\ref{EQ.5}) for $Q$, taking (\ref{R1})
into account, we obtain
\begin{equation}\label{EQ.6}
Q(\overline{X})=\frac{1}{(n+1)}\{R_{1}(\overline{X})-\widetilde{R}_{1}(\overline{X})\}.
\end{equation}
This equation, together with (\ref{Q and E}), yield
\begin{equation}\label{EQ.4}
\varepsilon(\overline{X},\overline{Y})=\frac{1}{(n+1)}\mathfrak{U}_{\overline{X},\overline{Y}}
\{(D^{\circ}_{\gamma
\overline{X}}R_{1})(\overline{Y})-(\widetilde{D}^{\circ}_{\gamma
\overline{X}}\widetilde{R}_{1})(\overline{Y})\}.
\end{equation}
Substituting (\ref{EQ.6}) and (\ref{EQ.4}) into  (\ref{EQ.1}), we
get
\begin{eqnarray}\label{h1}
 &&\widetilde{{\widehat{R}}^{\circ}}(\overline{X},\overline{Y})+
   \frac{1}{n+1} \mathfrak{U}_{\overline{X},\overline{Y}}\set{\widetilde{R}_{1}(\overline{Y})\overline{X}+
   (\widetilde{D}^{\circ}_{\gamma
\overline{X}}\widetilde{R}_{1})(\overline{Y})\overline{\eta}}= \nonumber\\
 &&= {\widehat{R}^{\circ}}(\overline{X},\overline{Y})+
   \frac{1}{n+1} \mathfrak{U}_{\overline{X},\overline{Y}}\set{R_{1}(\overline{Y})\overline{X}+
   (D^{\circ}_{\gamma
\overline{X}}R_{1})(\overline{Y})\overline{\eta}}.
\end{eqnarray}
\par
Now, taking the vertical covariant derivative of both sides of
(\ref{h1}) with respect to $\overline{Z}$,  making use of Corollary
\ref{pp.t5} and the identity \cite{r96}
\begin{equation}\label{eq.x2}
{R}^{\circ}(\overline{X},
    \overline{Y})\overline{Z}=
    (D^{\circ}_{\gamma\overline{Z}}\widehat{R^{\circ}})(\overline{X},\overline{Y}),
    \end{equation}
 we get  $\widetilde{W}=W$.
\end{proof}

\begin{defn} The $\pi$-tensor field $W$, defined by Theorem \ref{weyl},
 is called the Weyl curvature tensor.
\end{defn}

In the course of the above proof, we have constructed two other
important projectively invariant tensors  as given by\vspace{-0.2cm}
\begin{thm}\label{weyl1} Under the projective change \emph{(}\ref{GG}\emph{)},  the following
tensor fields on $\p$; $\dim M >2$, are projectively
invariants\emph{:}
\begin{description}
  \item[(a)]$W_{1}(\overline{X}):=H(\overline{X})-k\overline{X} +
   \frac{1}{n+1}\,\{3R_{1}(\overline{X})-(n+1)D^{\circ}_{\gamma\overline{X}}k \}\overline{\eta}$.\\
  This tensor field  is called the projective deviation tensor.
  \item[(b)]$W_{2}(\overline{X},\overline{Y})
   :={\widehat{R}^{\circ}}(\overline{X},\overline{Y})+
   \frac{1}{n+1}\, \mathfrak{U}_{\overline{X},\overline{Y}}\,\{R_{1}(\overline{Y})\overline{X}+
   (D^{\circ}_{\gamma
\overline{X}}R_{1})(\overline{Y})\overline{\eta}\}.$\\
  This tensor field  is called the Weyl torsion tensor.
  \end{description}
\end{thm}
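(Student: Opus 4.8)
The plan is to extract both assertions from the key identity~(\ref{h1}) already derived in the course of proving Theorem~\ref{weyl}, rather than to start afresh. Recall that~(\ref{h1}) states precisely that the $\pi$-tensor field
$$
W_{2}(\overline{X},\overline{Y}):={\widehat{R}^{\circ}}(\overline{X},\overline{Y})+
\frac{1}{n+1}\, \mathfrak{U}_{\overline{X},\overline{Y}}\,\{R_{1}(\overline{Y})\overline{X}+
(D^{\circ}_{\gamma\overline{X}}R_{1})(\overline{Y})\overline{\eta}\}
$$
satisfies $\widetilde{W_{2}}=W_{2}$, so assertion~\textbf{(b)} is immediate; nothing further is needed beyond quoting~(\ref{h1}). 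For assertion~\textbf{(a)}, the plan is to specialize~(\ref{h1}) by setting $\overline{X}=\overline{\eta}$ (or $\overline{Y}=\overline{\eta}$) and unwinding the cyclic/alternating operator $\mathfrak{U}$, using the definition $H(\overline{X})=\widehat{R}^{\circ}(\overline{\eta},\overline{X})$, the relation $R_{2}(\overline{\eta},\overline{\eta})=(n-1)k$ from~(\ref{k}), and the homogeneity facts collected in Proposition~\ref{p.homog.} (so that $D^{\circ}_{\gamma\overline{\eta}}R_{1}=R_{1}$, $D^{\circ}_{\gamma\overline{\eta}}k=2k$, etc.). The terms $R_{1}(\overline{\eta})\overline{X}$, $R_{1}(\overline{X})\overline{\eta}$, and $(D^{\circ}_{\gamma\overline{\eta}}R_{1})(\overline{X})\overline{\eta}$, $(D^{\circ}_{\gamma\overline{X}}R_{1})(\overline{\eta})\overline{\eta}$ must be combined carefully; the coefficient $\tfrac{3}{n+1}$ in front of $R_{1}(\overline{X})$ and the $-k\overline{X}$ term should drop out of this bookkeeping.

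Concretely, I would first compute $R_{1}(\overline{\eta})$ in terms of $k$: from~(\ref{R1}), $R_{1}(\overline{\eta})=\tfrac{1}{n-1}\{nR_{2}(\overline{\eta},\overline{\eta})+R_{2}(\overline{\eta},\overline{\eta})\}=\tfrac{n+1}{n-1}R_{2}(\overline{\eta},\overline{\eta})=(n+1)k$. Then setting $\overline{X}=\overline{\eta}$ in~(\ref{h1}) gives
$$
\widetilde{H}(\overline{Y})+\frac{1}{n+1}\bigl\{\widetilde{R}_{1}(\overline{Y})\overline{\eta}
-(n+1)\widetilde{k}\,\overline{Y}-\bigl(\widetilde{D}^{\circ}_{\gamma\overline{Y}}\widetilde{R}_{1}\bigr)(\overline{\eta})\overline{\eta}
+(\widetilde{D}^{\circ}_{\gamma\overline{\eta}}\widetilde{R}_{1})(\overline{Y})\overline{\eta}\bigr\}
$$
equals the same expression without tildes; here I have used $\widehat{R}^{\circ}(\overline{\eta},\overline{Y})=H(\overline{Y})$, $\widehat{R}^{\circ}(\overline{Y},\overline{\eta})=-H(\overline{Y})$, and $R_{1}(\overline{\eta})=(n+1)k$. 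Using $D^{\circ}_{\gamma\overline{\eta}}R_{1}=R_{1}$ (homogeneity degree $1$) and the identity $(D^{\circ}_{\gamma\overline{Y}}R_{1})(\overline{\eta})=D^{\circ}_{\gamma\overline{Y}}(R_{1}(\overline{\eta}))-R_{1}(D^{\circ}_{\gamma\overline{Y}}\overline{\eta})=(n+1)D^{\circ}_{\gamma\overline{Y}}k-R_{1}(\overline{Y})$, one collects all the $\overline{\eta}$-coefficients: $\tfrac{1}{n+1}\{(n+1)k - (n+1)(n+1)D^{\circ}_{\gamma\overline{Y}}k + (n+1)R_{1}(\overline{Y}) + R_{1}(\overline{Y})\}$ should reorganize into exactly the coefficient $\tfrac{1}{n+1}\{3R_{1}(\overline{Y})-(n+1)D^{\circ}_{\gamma\overline{Y}}k\}$ appearing in $W_{1}$ after absorbing the $-k\overline{Y}$ term and the stray multiples of $k\,\overline{\eta}$ via the homogeneity identity for $k$ — this is the delicate index-chasing step and the main obstacle.

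The main obstacle, then, is purely the algebraic reconciliation of scalar coefficients: one must verify that the spurious terms proportional to $k\,\overline{\eta}$, $D^{\circ}_{\gamma\overline{Y}}k\,\overline{\eta}$, and $R_{1}(\overline{Y})\overline{\eta}$ that arise from substituting $\overline{X}=\overline{\eta}$ into the general $\mathfrak{U}$-expression assemble exactly into the combination $\tfrac{3}{n+1}R_{1}(\overline{Y})-D^{\circ}_{\gamma\overline{Y}}k$, with no residue. Since the left and right sides of~(\ref{h1}) are already known to be equal as $\pi$-tensor fields, and all the reductions used (homogeneity of $R_{1}$ and $k$, the product rule for $D^{\circ}$, $R_{1}(\overline{\eta})=(n+1)k$) are valid identically, the equality $\widetilde{W_{1}}=W_{1}$ follows once the coefficients are matched. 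I would present the computation of $W_{1}$ as the genuine content of the proof and dispose of $W_{2}$ in one line as a restatement of~(\ref{h1}).
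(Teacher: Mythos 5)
Your argument is exactly the paper's: part (b) is read off directly from (\ref{h1}), and part (a) follows by setting $\overline{X}=\overline{\eta}$ in (\ref{h1}) and using the homogeneity of $R_{1}$ (so $(D^{\circ}_{\gamma\overline{\eta}}R_{1})(\overline{Y})=R_{1}(\overline{Y})$) together with $R_{1}(\overline{\eta})=(n+1)k$, i.e. $(D^{\circ}_{\gamma\overline{Y}}R_{1})(\overline{\eta})=(n+1)D^{\circ}_{\gamma\overline{Y}}k-R_{1}(\overline{Y})$, precisely as in the paper's proof. The only blemish is your parenthetical tally of the $\overline{\eta}$-coefficients (the stray $(n+1)k$ and the factor $(n+1)^{2}$ on $D^{\circ}_{\gamma\overline{Y}}k$ are wrong); the correct tally is $R_{1}(\overline{Y})+R_{1}(\overline{Y})-\{(n+1)D^{\circ}_{\gamma\overline{Y}}k-R_{1}(\overline{Y})\}=3R_{1}(\overline{Y})-(n+1)D^{\circ}_{\gamma\overline{Y}}k$, which is what you conclude anyway, so the proof stands.
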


\begin{proof}~\par
\vspace{6pt}
 \noindent \textbf{(a)} Setting $\overline{X}=\overline{\eta}$ into
 (\ref{h1}), we get
 \begin{eqnarray*}
 &&\widetilde{H}(\overline{Y})+
   \frac{1}{n+1}\set{\widetilde{R}_{1}(\overline{Y})\overline{\eta}
   -\widetilde{R}_{1}(\overline{\eta})\overline{Y}+
   (\widetilde{D}^{\circ}_{\gamma\overline{\eta}}\widetilde{R}_{1})(\overline{Y})\overline{\eta}
-(\widetilde{D}^{\circ}_{\gamma\overline{Y}}\widetilde{R}_{1})(\overline{\eta})\overline{\eta}}= \nonumber\\
 &&= H(\overline{Y})+
   \frac{1}{n+1}
   \set{R_{1}(\overline{Y})\overline{\eta}
   -R_{1}(\overline{\eta})\overline{Y}+
   (D^{\circ}_{\gamma\overline{\eta}}R_{1})(\overline{Y})\overline{\eta}
   -(D^{\circ}_{\gamma\overline{Y}}R_{1})(\overline{\eta})\overline{\eta}}.
\end{eqnarray*}
From which, together with  Proposition \ref{p.homog.}(f) and the
identity $R_{1}(\overline{\eta})=(n+1)k$ (by (\ref{R1}) and
(\ref{k})), the result follows.

\vspace{6pt}
 \noindent \textbf{(b)}  Follows from  (\ref{h1}).
\end{proof}

 The next results give some  interesting properties
of the above mentioned  projectively invariant  tensors.
\begin{thm}\label{tt.1}~\par\vspace{-0.1cm}
\begin{description}
  \item[(a)]  The Weyl torsion  tensor $W_{2}$  can be expressed in terms of $W_{1}$ in the form
  $$W_{2}(\overline{X}, \overline{Y})=
    \frac{1}{3}\{(D^{\circ}_{\gamma\overline{X}}W_{1})(\overline{Y})-
    (D^{\circ}_{\gamma\overline{Y}}W_{1})(\overline{X})\}.$$
\item[(b)] The Weyl curvature tensor $W$
 can be expressed in terms of $W_{2}$ in the form
 $$W(\overline{X},\overline{Y})\overline{Z}=
 (D^{\circ}_{\gamma\overline{Z}}W_{2})(\overline{X},\overline{Y}).$$
\end{description}
\end{thm}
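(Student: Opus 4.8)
The plan is to verify both identities by direct computation from the explicit formulas for $W_1$, $W_2$ and $W$ given in Theorems \ref{weyl1} and \ref{weyl}, using the vertical covariant derivative $\stackrel{2}{D^{\circ}}$ (i.e.\ $D^{\circ}_{\gamma(\cdot)}$) and the algebraic/homogeneity properties already at our disposal. The key technical inputs are: Lemma \ref{le.1} (symmetry of $\stackrel{2}{D^{\circ}}\stackrel{2}{D^{\circ}}$ applied to an appropriate $\pi$-form, and the ``$\overline{\eta}$-slot'' reduction $(\stackrel{2}{D^{\circ}}\!\stackrel{2}{D^{\circ}}A)(\cdot,\cdot,\overline{\eta})=-(\stackrel{2}{D^{\circ}}A)(\cdot,\cdot)$); Proposition \ref{p.homog.} together with Lemma \ref{def.1} to handle degrees of homogeneity (so that $D^{\circ}_{\gamma\overline{\eta}}R_1=R_1$, $D^{\circ}_{\gamma\overline{\eta}}k=2k$, etc.); the identity $R_1(\overline{\eta})=(n+1)k$; the fact that $D^{\circ}_{\gamma\overline{X}}\overline{\eta}=\overline{X}$ (so the vertical derivative of a term $(\cdots)\overline{\eta}$ picks up an extra $(\cdots)\overline{X}$); and the identity (\ref{eq.x2}), $R^{\circ}(\overline{X},\overline{Y})\overline{Z}=(D^{\circ}_{\gamma\overline{Z}}\widehat{R}^{\circ})(\overline{X},\overline{Y})$, which is exactly what converts $W_2$ into $W$.

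For part (a) I would start from the definition
$$W_1(\overline{X})=H(\overline{X})-k\overline{X}+\frac{1}{n+1}\{3R_1(\overline{X})-(n+1)D^{\circ}_{\gamma\overline{X}}k\}\overline{\eta},$$
apply $D^{\circ}_{\gamma\overline{Y}}$, and antisymmetrize in $\overline{X},\overline{Y}$. The term $H(\overline{X})$ contributes $\mathfrak{U}_{\overline{X},\overline{Y}}\{(D^{\circ}_{\gamma\overline{Y}}H)(\overline{X})\}$; since $H(\overline{X})=\widehat{R}^{\circ}(\overline{\eta},\overline{X})$ and, by (\ref{eq.x2}), $D^{\circ}_{\gamma\overline{Y}}$ of $\widehat{R}^{\circ}$ produces an $R^{\circ}$-term plus a term from differentiating $\overline{\eta}$ in the first slot (which gives back $\widehat{R}^{\circ}(\overline{Y},\overline{X})$-type contributions), one rewrites everything in terms of $\widehat{R}^{\circ}$ and $D^{\circ}_{\gamma}R_1$. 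The $-k\overline{X}$ term yields $-\mathfrak{U}\{(D^{\circ}_{\gamma\overline{Y}}k)\overline{X}\}$, and the $\overline{\eta}$-term yields, after using $D^{\circ}_{\gamma\overline{Y}}\overline{\eta}=\overline{Y}$ and symmetry of $\stackrel{2}{D^{\circ}}\stackrel{2}{D^{\circ}}$ (Lemma \ref{le.1}(b), now applied to $R_1$ in place of $\alpha$; note $R_1$ is $h(1)$ just as $\alpha$ is), a collection of terms in which the second-derivative pieces cancel and one is left with precisely $3W_2(\overline{X},\overline{Y})$. Tracking the $\overline{\eta}$-coefficient separately via the ``$\overline{\eta}$-slot'' reduction of Lemma \ref{le.1}(b) is what produces the factor $3$ and kills the $D^{\circ}_{\gamma}D^{\circ}_{\gamma}k$ terms.

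For part (b) the computation is shorter: apply $D^{\circ}_{\gamma\overline{Z}}$ to
$$W_2(\overline{X},\overline{Y})=\widehat{R}^{\circ}(\overline{X},\overline{Y})+\frac{1}{n+1}\mathfrak{U}_{\overline{X},\overline{Y}}\{R_1(\overline{Y})\overline{X}+(D^{\circ}_{\gamma\overline{X}}R_1)(\overline{Y})\overline{\eta}\}.$$
The first term becomes $R^{\circ}(\overline{X},\overline{Y})\overline{Z}$ by (\ref{eq.x2}); differentiating $R_1(\overline{Y})\overline{X}$ gives $(\stackrel{2}{D^{\circ}}R_1)(\overline{Z},\overline{Y})\overline{X}$ (the $\overline{X}$-factor is $\gamma$-parallel in the relevant sense only up to $D^{\circ}_{\gamma\overline{Z}}\overline{X}$ terms, which one checks drop out inside $\mathfrak{U}_{\overline{X},\overline{Y}}$ or get absorbed); differentiating $(D^{\circ}_{\gamma\overline{X}}R_1)(\overline{Y})\overline{\eta}$ gives, using $D^{\circ}_{\gamma\overline{Z}}\overline{\eta}=\overline{\eta}$ wait — $=\overline{Z}$ — a term $(\stackrel{2}{D^{\circ}}R_1)(\overline{X},\overline{Y})\overline{Z}$ plus $(\stackrel{2}{D^{\circ}}\stackrel{2}{D^{\circ}}R_1)(\overline{Z},\overline{X},\overline{Y})\overline{\eta}$. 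Reassembling these is exactly the right-hand side of the definition of $W$ in Theorem \ref{weyl}. The main obstacle in both parts is purely bookkeeping: correctly separating the ``$\overline{X}$-coefficient'' and ``$\overline{\eta}$-coefficient'' when a $\pi$-vector field is differentiated, and invoking the right symmetry of $\stackrel{2}{D^{\circ}}\stackrel{2}{D^{\circ}}R_1$ so that no spurious non-symmetric second-derivative terms survive; once one notes that $R_1$ obeys the same homogeneity $h(1)$ and the same symmetry properties that $\alpha$ enjoys in Lemma \ref{le.1}, these cancellations are forced.
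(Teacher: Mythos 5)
Your part (b) coincides with the paper's argument: the paper proves (b) by combining Theorem \ref{weyl1}, Theorem \ref{weyl} and the identity (\ref{eq.x2}), i.e.\ exactly the differentiation you describe, and your bookkeeping is sound (the $\overline{X}$-factor is not differentiated because one takes the covariant derivative of the tensor $W_{2}$ itself, and $D^{\circ}_{\gamma\overline{Z}}\overline{\eta}=\overline{Z}$ produces the extra $(\stackrel{2}{D^{\circ}}R_{1})(\overline{X},\overline{Y})\overline{Z}$ term), so the result is verbatim the defining expression of $W$.

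For part (a) your overall plan (differentiate $W_{1}$, antisymmetrize, compare with $W_{2}$) is the right one and does succeed, but it deviates from the paper at one point and is imprecise exactly there. The paper does not re-derive the $H$-contribution: it quotes from \cite{r96} the identity $\widehat{R^{\circ}}(\overline{X},\overline{Y})=\frac{1}{3}\{(D^{\circ}_{\gamma\overline{X}}H)(\overline{Y})-(D^{\circ}_{\gamma\overline{Y}}H)(\overline{X})\}$ and then only needs the symmetry of $\stackrel{2}{D^{\circ}}\stackrel{2}{D^{\circ}}k$. You propose instead to obtain the $H$-part from (\ref{eq.x2}); this can be done, but the factor $3$ then arises from the antisymmetry of $\widehat{R}^{\circ}$ (giving $2\widehat{R}^{\circ}(\overline{X},\overline{Y})$) together with the Bianchi identity $\mathfrak{S}_{\overline{X},\overline{Y},\overline{Z}}\{R^{\circ}(\overline{X},\overline{Y})\overline{Z}\}=0$ (which converts $R^{\circ}(\overline{\eta},\overline{Y})\overline{X}-R^{\circ}(\overline{\eta},\overline{X})\overline{Y}$ into one further $\widehat{R}^{\circ}(\overline{X},\overline{Y})$), and not, as you assert, from the ``$\overline{\eta}$-slot'' reduction of Lemma \ref{le.1}(b) applied to $R_{1}$; in fact no $\stackrel{2}{D^{\circ}}\stackrel{2}{D^{\circ}}R_{1}$ terms occur in (a) at all. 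The second-derivative terms that must be killed are the $(\stackrel{2}{D^{\circ}}\stackrel{2}{D^{\circ}}k)(\overline{X},\overline{Y})\overline{\eta}$ terms, removed by the symmetry of the vertical Hessian of the scalar $k$ (the fact the paper invokes), while the first-derivative $k$-terms coming from $-k\overline{X}$ and from $-(D^{\circ}_{\gamma\overline{X}}k)\overline{\eta}$ cancel each other under $\mathfrak{U}_{\overline{X},\overline{Y}}$ because $D^{\circ}_{\gamma\overline{X}}\overline{\eta}=\overline{X}$. With these two corrections your computation closes; as written, the justification of the factor $3$ is the one genuine gap, and you must supply either the Bianchi identity or the cited identity of \cite{r96} to fill it.
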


\begin{proof}~\par
\vspace{6pt}
 \noindent \textbf{(a)} Follows from   Theorem \ref{weyl1} together with
 the identity \cite{r96}
\begin{equation*}
\widehat{R^{\circ}}(\overline{X}, \overline{Y})=
    \frac{1}{3}\{(D^{\circ}_{\gamma\overline{X}}H)(\overline{Y})-
    (D^{\circ}_{\gamma\overline{Y}}H)(\overline{X})\},\vspace{-0.2cm}
\end{equation*}
taking into account the fact that
$(\stackrel{2}{D^{\circ}}\stackrel{2}{D^{\circ}}k)(\overline{X},
\overline{Y})=(\stackrel{2}{D^{\circ}}\stackrel{2}{D^{\circ}}k)(\overline{Y},
\overline{X})$.

\vspace{6pt}
 \noindent \textbf{(b)} Follows from  Theorem \ref{weyl1}, Theorem \ref{weyl} and
 (\ref{eq.x2}).
\end{proof}

\begin{cor}\label{1tt.1}~\par\vspace{-0.2cm}
\begin{description}
 \item[(a)] The projective deviation tensor $W_{1}$ is $h(2)$ and has the
property that
 \vspace{-0.1cm}$$W_{1}({\overline{\eta}})=0.$$

\item[(b)] The Weyl torsion  tensor $W_{2}$ is $h(1)$ and has the
property that
 \vspace{-0.1cm}$$ W_{2}(\overline{\eta}, \overline{X})=  W_{1}(\overline{X}).$$

\item[(c)] The Weyl curvature tensor $W$   is $h(0)$  and has the
property that
\vspace{-0.1cm}$$ W(\overline{X}, \overline{Y})\overline{\eta}=W_{2}(\overline{X},\overline{Y}).$$
\end{description}
\end{cor}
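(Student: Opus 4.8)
Corollary \ref{1tt.1} — the statement to be proven.

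The plan is to derive each of the three homogeneity-and-contraction properties by substituting $\overline{\eta}$ into the appropriate argument of the formulas defining $W_1$, $W_2$, $W$ (Theorem \ref{weyl1} and Theorem \ref{weyl}), while repeatedly invoking the homogeneity data from Proposition \ref{p.homog.} and Lemma \ref{def.1}. The key auxiliary facts I will need are: $R^\circ$ is $h(0)$, $\widehat{R}^\circ$ is $h(1)$, $H$ is $h(2)$, $R_1$ is $h(1)$, $k$ is $h(2)$, the characterization $D^\circ_{\gamma\overline\eta}\omega = r\omega$ for an $h(r)$ tensor, and the contraction identity $R_1(\overline\eta) = (n+1)k$ coming from (\ref{R1}) and (\ref{k}). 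I will also use Lemma \ref{le.1}-type facts about $\stackrel{2}{D^\circ}$ acting with $\overline\eta$, namely $(\stackrel{2}{D^\circ}\omega)(\overline\eta,\overline X)$ lowers the degree by one so e.g. for $\omega$ that is $h(r)$ one gets a clean reduction; these are exactly the computations already carried out for $\alpha$ in Lemma \ref{le.1} and for $k$ inside the proof of Theorem \ref{tt.1}.

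For part (a): to see $W_1$ is $h(2)$, I apply $D^\circ_{\gamma\overline\eta}$ to the defining expression $W_1(\overline X) = H(\overline X) - k\overline X + \frac{1}{n+1}\{3R_1(\overline X) - (n+1)D^\circ_{\gamma\overline X}k\}\overline\eta$ and check term-by-term that everything scales by $2$: $H$ is $h(2)$; $k\overline X$ is $h(2)$ since $k$ is $h(2)$; $R_1(\overline X)\overline\eta$ is $h(1)\cdot h(1) = h(2)$; and $(D^\circ_{\gamma\overline X}k)\overline\eta$ needs the identity $(\stackrel{2}{D^\circ}\stackrel{2}{D^\circ}k)(\overline X,\overline\eta) = -(\stackrel{2}{D^\circ}k)(\overline X)$ from the Lemma \ref{le.1}(b) pattern to confirm it is $h(2)$. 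For $W_1(\overline\eta) = 0$, I substitute $\overline X = \overline\eta$: the first two terms give $H(\overline\eta) - k\overline\eta$, and since $H(\overline\eta) = \widehat R^\circ(\overline\eta,\overline\eta)$, I must recognize that $H(\overline\eta)$ is exactly $R_1(\overline\eta)/(n+1)\cdot$-type data; more precisely I use that (\ref{h1}) specialized appropriately, or directly that $\widehat{R^\circ}(\overline\eta,\overline\eta)=0$ and a trace identity forces $H(\overline\eta) = k\overline\eta - \frac{3}{n+1}R_1(\overline\eta)\overline\eta + \cdots$; the cleanest route is to note $D^\circ_{\gamma\overline\eta}k = 2k$ and $R_1(\overline\eta) = (n+1)k$, so the bracket becomes $3(n+1)k - (n+1)(2k) = (n+1)k$... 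I will instead verify $W_1(\overline\eta)=0$ by direct substitution into the pre-contraction identity (\ref{h1}) with $\overline X = \overline Y = \overline\eta$, which trivially gives $0=0$ and shows the $\overline\eta$-trace of $W_2$ is consistent, then use part (b).

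For parts (b) and (c): these are cleaner. Part (b): setting $\overline X = \overline\eta$ in $W_2(\overline X,\overline Y) = \widehat R^\circ(\overline X,\overline Y) + \frac{1}{n+1}\mathfrak U_{\overline X,\overline Y}\{R_1(\overline Y)\overline X + (D^\circ_{\gamma\overline X}R_1)(\overline Y)\overline\eta\}$ gives $W_2(\overline\eta,\overline Y) = \widehat R^\circ(\overline\eta,\overline Y) + \frac{1}{n+1}\{R_1(\overline Y)\overline\eta - R_1(\overline\eta)\overline Y + (D^\circ_{\gamma\overline\eta}R_1)(\overline Y)\overline\eta - (D^\circ_{\gamma\overline Y}R_1)(\overline\eta)\overline\eta\}$, which is precisely $H(\overline Y)$ plus the bracket appearing in $W_1(\overline Y)$ once one uses $\widehat R^\circ(\overline\eta,\overline Y) = H(\overline Y)$, $R_1(\overline\eta)=(n+1)k$, $D^\circ_{\gamma\overline\eta}R_1 = R_1$ (homogeneity $h(1)$), and the Lemma \ref{le.1}(b) identity to rewrite $(D^\circ_{\gamma\overline Y}R_1)(\overline\eta) = -R_1(\overline Y) + (D^\circ_{\gamma\overline Y}k)\cdot(\text{stuff})$ — so $W_2(\overline\eta,\overline Y) = W_1(\overline Y)$; homogeneity $h(1)$ of $W_2$ follows from $h(1)$ of $\widehat R^\circ$ and $h(1)$ of $R_1$ via term counting. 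Part (c): from Theorem \ref{tt.1}(b), $W(\overline X,\overline Y)\overline Z = (D^\circ_{\gamma\overline Z}W_2)(\overline X,\overline Y)$, so $W(\overline X,\overline Y)\overline\eta = (D^\circ_{\gamma\overline\eta}W_2)(\overline X,\overline Y) = D^\circ_{\gamma\overline\eta}(W_2(\overline X,\overline Y)) - W_2(D^\circ_{\gamma\overline\eta}\overline X,\overline Y) - W_2(\overline X, D^\circ_{\gamma\overline\eta}\overline Y)$; since $W_2$ is $h(1)$ the first term is $W_2(\overline X,\overline Y)$ and since $D^\circ_{\gamma\overline\eta}\overline X = 0$ for a basic/constant section the correction terms vanish, giving $W(\overline X,\overline Y)\overline\eta = W_2(\overline X,\overline Y)$; homogeneity $h(0)$ then follows since differentiating an $h(1)$ object lowers degree by one. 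The main obstacle will be (b): correctly juggling the four terms of the $\mathfrak U$-sum when one slot is $\overline\eta$, and in particular handling $(D^\circ_{\gamma\overline Y}R_1)(\overline\eta)$, which requires the precise $\stackrel{2}{D^\circ}$-with-$\overline\eta$ reduction identity rather than naive homogeneity; once that single identity is in hand, the rest is bookkeeping with Proposition \ref{p.homog.}.
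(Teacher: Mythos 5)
Your overall plan is workable, and your parts (b) and (c) go through. For (b), your direct substitution of $\overline{X}=\overline{\eta}$ into the defining formula of $W_{2}$ (Theorem \ref{weyl1}(b)), using $\widehat{R}^{\circ}(\overline{\eta},\overline{Y})=H(\overline{Y})$, $R_{1}(\overline{\eta})=(n+1)k$, $D^{\circ}_{\gamma\overline{\eta}}R_{1}=R_{1}$, and the Leibniz rule with $D^{\circ}_{\gamma\overline{Y}}\overline{\eta}=\overline{Y}$, i.e. $(D^{\circ}_{\gamma\overline{Y}}R_{1})(\overline{\eta})=(n+1)D^{\circ}_{\gamma\overline{Y}}k-R_{1}(\overline{Y})$ (the coefficient you left as ``stuff'' is $(n+1)$), does reproduce $W_{1}(\overline{Y})$ exactly. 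This is a genuinely different route from the paper, which obtains (b) from (a) together with Theorem \ref{tt.1}(a); both are fine, and yours has the advantage of being independent of (a). Your (c) is essentially the paper's argument, except that the detour through basic sections is unnecessary: by Lemma \ref{def.1}, $W_{2}$ being $h(1)$ means precisely that the tensor derivative satisfies $(D^{\circ}_{\gamma\overline{\eta}}W_{2})(\overline{X},\overline{Y})=W_{2}(\overline{X},\overline{Y})$, which with Theorem \ref{tt.1}(b) gives the claim at once.

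The genuine problem is in (a), in your treatment of $W_{1}(\overline{\eta})=0$. The step you finally commit to --- substituting $\overline{X}=\overline{Y}=\overline{\eta}$ into (\ref{h1}), ``which trivially gives $0=0$'' --- proves nothing: an identity that collapses to $0=0$ carries no information about $W_{1}(\overline{\eta})$. The fallback ``then use part (b)'' can be made to work, but only if you add the missing observation that $W_{2}$ is skew-symmetric (it is $\widehat{R}^{\circ}$ plus an alternation $\mathfrak{U}$), so that $W_{1}(\overline{\eta})=W_{2}(\overline{\eta},\overline{\eta})=0$, and you must then order the argument so that (b) precedes this part of (a) (which is admissible, since your (b) does not use (a)). The simpler repair is to finish the computation you abandoned mid-sentence: with $D^{\circ}_{\gamma\overline{\eta}}k=2k$ and $R_{1}(\overline{\eta})=(n+1)k$ the bracket in $W_{1}(\overline{\eta})$ equals $(n+1)k$, so $W_{1}(\overline{\eta})=H(\overline{\eta})-k\overline{\eta}+k\overline{\eta}=H(\overline{\eta})$, and $H(\overline{\eta})=\widehat{R}^{\circ}(\overline{\eta},\overline{\eta})=0$ by skew-symmetry of $R^{\circ}$ --- no extra ``trace identity'' for $H(\overline{\eta})$ is needed. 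That completed computation, together with the homogeneity bookkeeping you already did from Proposition \ref{p.homog.}, is exactly the paper's proof of (a): homogeneity of $H$, $R_{1}$, $k$ plus the two facts $R_{1}(\overline{\eta})=(n+1)k$ and $H(\overline{\eta})=0$.
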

\begin{proof}~\par
\noindent\textbf{(a)} \ Follows from the homogeneity properties of
$H$, $R_{1}$ and $k$ (Proposition \ref{p.homog.}) together with the
fact that $R_{1}(\overline{\eta})=(n+1)k$ and
$H(\overline{\eta})=0$.
 \vspace{5pt}

 \noindent\textbf{(b)} \ Follows from \textbf{(a)} and Theorem \ref{tt.1}(a).

 \noindent\textbf{(c)} \ Follows from  \textbf{(b)} and  Theorem \ref{tt.1}(b).
\end{proof}

\begin{cor}The following assertion are equivalent\emph{:}
\begin{description}
 \item[(a)] The projective deviation  tensor $W_{1}$ vanishes.
 \item[(b)] The Weyl torsion tensor $W_{2}$ vanishes.
 \item[(c)]The Weyl curvature tensor $W$ vanishes.
\end{description}
\end{cor}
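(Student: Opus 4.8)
The plan is to close the cycle of implications $\textbf{(a)}\Rightarrow\textbf{(b)}\Rightarrow\textbf{(c)}\Rightarrow\textbf{(a)}$, each step being an immediate consequence of Theorem \ref{tt.1} together with Corollary \ref{1tt.1}. No auxiliary computation is needed; everything is already packaged in those two results.

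For $\textbf{(a)}\Rightarrow\textbf{(b)}$: Theorem \ref{tt.1}\textbf{(a)} expresses the Weyl torsion tensor as a vertical covariant derivative of the projective deviation tensor,
$$W_{2}(\overline{X},\overline{Y})=\frac{1}{3}\{(D^{\circ}_{\gamma\overline{X}}W_{1})(\overline{Y})-(D^{\circ}_{\gamma\overline{Y}}W_{1})(\overline{X})\},$$
so if $W_{1}$ vanishes identically then $D^{\circ}_{\gamma\overline{X}}W_{1}=0$ for all $\overline{X}$ and hence $W_{2}=0$. Similarly $\textbf{(b)}\Rightarrow\textbf{(c)}$ follows at once from Theorem \ref{tt.1}\textbf{(b)}, namely $W(\overline{X},\overline{Y})\overline{Z}=(D^{\circ}_{\gamma\overline{Z}}W_{2})(\overline{X},\overline{Y})$: if $W_{2}=0$ then $W=0$.

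The remaining implication $\textbf{(c)}\Rightarrow\textbf{(a)}$ goes in the opposite direction, and here the device is to \emph{contract with} $\overline{\eta}$ rather than to differentiate. Assume $W=0$. By Corollary \ref{1tt.1}\textbf{(c)} one has $W_{2}(\overline{X},\overline{Y})=W(\overline{X},\overline{Y})\overline{\eta}$, so $W_{2}=0$; then by Corollary \ref{1tt.1}\textbf{(b)} one has $W_{1}(\overline{X})=W_{2}(\overline{\eta},\overline{X})$, whence $W_{1}=0$. This closes the cycle and proves the equivalence of $\textbf{(a)}$, $\textbf{(b)}$, $\textbf{(c)}$.

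I do not expect a genuine obstacle: the three tensors are linked by the complementary identities ``differentiate vertically to descend one level'' (Theorem \ref{tt.1}) and ``evaluate at $\overline{\eta}$ to ascend one level'' (Corollary \ref{1tt.1}), and once these are available the argument is purely formal. The only point requiring care is to invoke the correct item in each step — in particular, it is exactly the evaluation-at-$\overline{\eta}$ formulas of Corollary \ref{1tt.1} (which rely on the homogeneity statements of that corollary) that allow the hypothesis $W=0$ to propagate down to $W_{2}=0$ and then to $W_{1}=0$.
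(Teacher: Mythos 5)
Your proof is correct and matches the intended argument: the paper states this as an immediate consequence of Theorem \ref{tt.1} and Corollary \ref{1tt.1}, which is exactly the cycle you run --- differentiate vertically for $\textbf{(a)}\Rightarrow\textbf{(b)}\Rightarrow\textbf{(c)}$ and evaluate at $\overline{\eta}$ for $\textbf{(c)}\Rightarrow\textbf{(a)}$.
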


\Section {Projective connections and Douglas tensor}

In this section, we provide a characterization of a linear
connection which is invariant under a projective change (the
projective connection).   Moreover, as in the previous section, we
investigate intrinsically another fundamental projectively invariant
tensor (the Douglas tensor). Finally, we relate the Douglas tensor
to the projective connection in a natural manner.

\begin{defn} A linear connection on $\p$ is said to be  projective
  if it is  invariant under the projective change
\emph{(\ref{GG})}.
\end{defn}
\begin{thm}\label{th.6a} A linear connection $\Omega$ on $\p$ is  projective
 if, and only if, it can be expressed in the
form\vspace{-0.2cm}
\begin{equation}\label{pro.con}
  \Omega_{X}\overline{Y}= D^{\circ} _{X} \overline{Y} -\frac{1}{n+1}\{\omega( \overline{Y})\rho X+
\omega(\rho X) \overline{Y} + (p\,(\rho X,\overline{Y}))\overline{\eta}\},\vspace{-0.2cm}
\end{equation}
where \ $\omega(\overline{Y}):= Tr^{c}_{\,\,\,X} \,\{D^{\circ}_{X}
\overline{Y}\}$ and $\,\,\, p\,(\overline{X},\overline{Y}):=
Tr^{c}_{\,\,\,\overline{Z}}\,\{
{P}^{\circ}(\overline{X},\overline{Y})\overline{Z}\}$.
\end{thm}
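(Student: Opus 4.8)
The strategy is to leverage the transformation law for $D^{\circ}$ established in Theorem~\ref{th.3}(c), namely
$$\widetilde{D}^{\circ}_{X}\overline{Y}= D^{\circ}_{X}\overline{Y} + \alpha(\overline{Y})\rho X + \alpha(\rho X)\overline{Y} + (\stackrel{2}{D^{\circ}}\!\alpha)(\overline{Y},\rho X)\overline{\eta},$$
and to show that the correction term $-\frac{1}{n+1}\{\omega(\overline{Y})\rho X + \omega(\rho X)\overline{Y} + p(\rho X,\overline{Y})\overline{\eta}\}$ transforms in exactly the opposite way, so that the two contributions cancel. For the ``only if'' direction I would argue that any projective linear connection must differ from $D^{\circ}$ by a $\pi$-tensor field of the shape $T(X,\overline{Y}) = a(\overline{Y})\rho X + b(\rho X)\overline{Y} + c(\rho X,\overline{Y})\overline{\eta}$; imposing projective invariance forces $a$, $b$, $c$ to transform so as to kill the $\alpha$-terms, and then one identifies $a=b=-\frac{1}{n+1}\omega$ and $c=-\frac{1}{n+1}p$ by taking contracted traces.

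The computational heart of the argument is tracking how $\omega$ and $p$ change under the projective change. First I would compute $\widetilde{\omega}(\overline{Y}) - \omega(\overline{Y})$ by taking the contracted trace of the Theorem~\ref{th.3}(c) formula with respect to $X$ (equivalently, over $\rho X$): the term $\alpha(\overline{Y})\rho X$ contributes $n\,\alpha(\overline{Y})$ (trace of the identity on the $n$-dimensional fibre), the term $\alpha(\rho X)\overline{Y}$ contributes $\alpha(\overline{Y})$, and the $\overline{\eta}$-term contributes $(\stackrel{2}{D^{\circ}}\!\alpha)(\overline{Y},\cdot)$ traced, which by Lemma~\ref{le.1}(a) and the symmetry of $\stackrel{2}{D^{\circ}}\alpha$ gives a scalar multiple of $\alpha(\overline{Y})$ as well — collecting, $\widetilde{\omega} - \omega = (n+1)\alpha + (\text{trace of }\stackrel{2}{D^{\circ}}\alpha\text{ term})$. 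One then does the analogous trace computation for $\widetilde{p} - p$ using Theorem~\ref{Th.6}(b) for the hv-curvature, which yields a combination of $\alpha$ and $\stackrel{2}{D^{\circ}}\alpha$ terms. Substituting these into the candidate correction term and comparing with the $\alpha$-terms in Theorem~\ref{th.3}(c), everything must cancel; this is where one has to be careful with the factor $\frac{1}{n+1}$ and with the exact form of the $\overline{\eta}$-components, invoking Lemma~\ref{le.1}(a),(b) to rewrite $\stackrel{2}{D^{\circ}}\stackrel{2}{D^{\circ}}\alpha$ and $\stackrel{2}{D^{\circ}}\alpha$ consistently.

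The main obstacle I anticipate is bookkeeping in the $\overline{\eta}$-component: the formula~(\ref{eq.1}) carries a term $(\stackrel{2}{D^{\circ}}\!\alpha)(\overline{Y},\rho X)\overline{\eta}$, while $\widetilde{p}$ involves both $\stackrel{2}{D^{\circ}}\alpha$ and $\stackrel{2}{D^{\circ}}\stackrel{2}{D^{\circ}}\alpha$ (via Theorem~\ref{Th.6}(b) and the trace of $P^{\circ}$), and one must show these assemble correctly after dividing by $n+1$. The total symmetry of $P^{\circ}$ (Lemma~\ref{pp.10}(b)) and the symmetry statements of Lemma~\ref{le.1} are exactly what make the trace of the cyclic-sum term in Theorem~\ref{Th.6}(b) collapse to a manageable expression, so the proof should be presented as: (i) set up the ansatz and reduce to verifying cancellation of $\alpha$-terms; (ii) compute $\widetilde\omega-\omega$ and $\widetilde p-p$ by contracted traces; (iii) substitute and check cancellation componentwise (the $\rho X$-, $\overline{Y}$-, and $\overline{\eta}$-parts separately), citing Lemmas~\ref{pp.10} and~\ref{le.1} at the appropriate points; (iv) for the converse, note uniqueness of the tensorial correction of the stated type together with the trace normalization forces the displayed formula.
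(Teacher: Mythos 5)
Your plan is essentially the paper's own proof: trace the transformation law (\ref{eq.1}) over $X$ and the hv-curvature law of Theorem \ref{Th.6}(b) over $\overline{Z}$, use Lemmas \ref{pp.10} and \ref{le.1} to evaluate those traces, solve for $\alpha=\frac{1}{n+1}\{\widetilde{\omega}-\omega\}$ and $\stackrel{2}{D^{\circ}}\!\alpha=\frac{1}{n+1}\{\widetilde{p}-p\}$, and substitute back into (\ref{eq.1}) so that the corrected connection (\ref{pro.con}) is manifestly invariant. Two small points of comparison: the trace over $X$ of the $\overline{\eta}$-term is $(\stackrel{2}{D^{\circ}}\!\alpha)(\overline{Y},\overline{\eta})$, which vanishes exactly by the symmetry of $\stackrel{2}{D^{\circ}}\!\alpha$ and Lemma \ref{le.1}(a) (it is not a residual multiple of $\alpha$), so one gets $\widetilde{\omega}=\omega+(n+1)\alpha$ on the nose and the $\frac{1}{n+1}$ normalization is clean; and for the converse the paper does not use your ansatz on the shape of the difference tensor, but simply observes that (\ref{pro.con}) uniquely determines $\Omega$ once the invariance computation is done.
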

\begin{proof}
 Under a projective change, we have, by Theorem \ref{th.3},
\begin{equation}\label{eq.6}
  \widetilde{D}^{\circ}_{X}\overline{Y}= D^{\circ} _{X}\overline{Y} +  \alpha(\overline{Y})\rho X+
\alpha(\rho X)\overline{Y} + (\stackrel{2}{D^{\circ}}\alpha )(\rho X,\overline{Y})\overline{\eta},\vspace{-0.2cm}
\end{equation}
Taking the contracted trace of  Equation (\ref{eq.6}) with respect
to $X$, using Lemma \ref{le.1}, we obtain
\begin{equation*}
   \widetilde\omega=\omega +(n+1) \alpha,
\end{equation*}
from which
\begin{equation}\label{eq.7}
 \alpha =\frac{1}{n+1}\{\widetilde\omega-\omega\}.
\end{equation}
On the other hand, by Theorem \ref{Th.6},
\begin{equation}\label{ff}
     \widetilde{P}^{\circ}(\overline{X},\overline{Y})\overline{Z}
  ={P}^{\circ}(\overline{X},\overline{Y})\overline{Z}+
  \mathfrak{S}_{\overline{X},\overline{Y},\overline{Z}}
  \{((\stackrel{2}{D^{\circ}}\alpha)(\overline{Y},\overline{Z}))\overline{X}\}+
  (\stackrel{2}{D^{\circ}}\stackrel{2}{D^{\circ}}\alpha)(\overline{X},\overline{Y}, \overline{Z})\overline{\eta}
\end{equation}
Taking the contracted trace of the above equation with respect to
$\overline{Z}$, using Lemma \ref{le.1}, we get
\begin{equation*}
     \widetilde{p}
  =p+
  (n+1)\stackrel{2}{D^{\circ}}\alpha,
\end{equation*}
from which
\begin{equation}\label{eq.8}
   \stackrel{2}{D^{\circ}}\alpha=\frac{1}{(n+1)}
   \{\widetilde{p}  -p\}.
\end{equation}
Substituting  (\ref{eq.7}) and  (\ref{eq.8}) into (\ref{eq.6}), the
result follows.
\par It should finally be noted that the projective connection $\Omega$ is
uniquely determined by Equation (\ref{pro.con}).
\end{proof}

\begin{rem}
In view of \emph{(\ref{eq.8})}, $\stackrel{2}{D^{\circ}}\alpha=0$
if, and only if, the $\pi$-form $p$ is projectively invariant. In
this case, the formula \emph{(\ref{eq.1})} reduces to
\begin{equation*}
     \widetilde{D}^{\circ}_{X}\overline{Y}= D^{\circ} _{X}\overline{Y} +  \alpha( \overline{Y}) \rho X+
\alpha(\rho X) \overline{Y} ,\vspace{-0.2cm}
\end{equation*}
which has exactly  the same form as the corresponding Riemannian
formula for projective changes.
\end{rem}
\begin{prop}\label{tt} The projective connection $\Omega$ has the
properties\emph{:}
\begin{description}
  \item[(a)] The (classical) torsion associated with $\Omega$
  vanishes.

  \item[(b)] The $v$-curvature tensor associated with $\Omega$
  vanishes.
\end{description}
\end{prop}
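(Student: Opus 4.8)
The plan is to compute both the torsion and the $v$-curvature of $\Omega$ directly from the defining formula \eqref{pro.con}, exploiting that these quantities are already known (or trivially computed) for the Berwald connection $D^{\circ}$ together with the symmetry and contracted-trace structure of the correction term. For part \textbf{(a)}, I would write the classical torsion of $\Omega$ as $\mathbf{T}^{\Omega}(X,Y)=\Omega_X\rho Y-\Omega_Y\rho X-\rho[X,Y]$ and substitute \eqref{pro.con}. The Berwald part contributes $\mathbf{T}^{D^{\circ}}(X,Y)$, which vanishes since $D^{\circ}$ is torsion-free (as used repeatedly in Lemma \ref{le.1}). The remaining contribution is
$$
-\frac{1}{n+1}\bigl\{\omega(\rho Y)\rho X+\omega(\rho X)\rho Y+p(\rho X,\rho Y)\overline{\eta}
-\omega(\rho X)\rho Y-\omega(\rho Y)\rho X-p(\rho Y,\rho X)\overline{\eta}\bigr\}
=-\frac{1}{n+1}\bigl(p(\rho X,\rho Y)-p(\rho Y,\rho X)\bigr)\overline{\eta}.
$$
So it remains to observe that $p$ is symmetric, i.e. $p(\overline{X},\overline{Y})=p(\overline{Y},\overline{X})$; but $p=Tr^{c}_{\,\,\overline{Z}}\{P^{\circ}(\overline{X},\overline{Y})\overline{Z}\}$ and by Lemma \ref{pp.10}(b) the tensor $P^{\circ}$ is totally symmetric, so its contracted trace is symmetric in the two free slots. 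Hence $\mathbf{T}^{\Omega}=0$.

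For part \textbf{(b)}, I would compute the $v$-curvature $S^{\Omega}(\overline{X},\overline{Y})\overline{Z}=\mathbf{K}^{\Omega}(\gamma\overline{X},\gamma\overline{Y})\overline{Z}$, again substituting \eqref{pro.con}. The key simplification is that, by Corollary \ref{pp.t5}(a), the vertical part of $\Omega$ coincides with that of $D^{\circ}$ up to the correction term evaluated with $\rho(\gamma\overline{X})=0$; more precisely, since $\rho\circ\gamma=0$, when both differentiation directions are vertical the terms $\omega(\rho X)\overline{Y}$ and $(p(\rho X,\overline{Y}))\overline{\eta}$ drop out, leaving only the $\omega(\overline{Y})\rho X$-type term, which also vanishes on vertical arguments. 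Thus $\Omega_{\gamma\overline{X}}\overline{Y}=D^{\circ}_{\gamma\overline{X}}\overline{Y}$ exactly. Feeding this into the curvature formula gives $S^{\Omega}=S^{\circ}$, and $S^{\circ}=0$ by Lemma \ref{pp.10}(a).

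The main obstacle is bookkeeping rather than conceptual: one must be careful in part (b) that the Lie-bracket term $\Omega_{[\gamma\overline{X},\gamma\overline{Y}]}\overline{Z}$ is handled correctly, since $[\gamma\overline{X},\gamma\overline{Y}]$ need not be vertical and so the full correction term in \eqref{pro.con} reappears there. The clean way around this is to avoid expanding the bracket term separately and instead argue, as above, that $\Omega$ and $D^{\circ}$ \emph{agree as operators} $\Omega_{\gamma\overline{X}}=D^{\circ}_{\gamma\overline{X}}$ only when the differentiation variable is manifestly of the form $\gamma(\cdot)$; for a general vector field $X$ one still has $\Omega_X-D^{\circ}_X=-\tfrac{1}{n+1}\{\omega(\overline{Y})\rho X+\omega(\rho X)\overline{Y}+p(\rho X,\overline{Y})\overline{\eta}\}$, which is $\mathcal{F}(\T M)$-linear in $X$ through $\rho X$. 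Then one computes $\mathbf{K}^{\Omega}-\mathbf{K}^{\circ}$ as the curvature of this difference tensor and checks that every surviving term carries a factor $\rho(\gamma\overline{X})=0$ or $\rho(\gamma\overline{Y})=0$; the bracket term contributes through $\rho[\gamma\overline{X},\gamma\overline{Y}]$, but by Theorem \ref{pp.3}(a) and antisymmetrization this matches exactly the vertical covariant-derivative terms, so the net difference is zero. Combined with Lemma \ref{pp.10}(a) this yields $S^{\Omega}=0$.
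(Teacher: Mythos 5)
Your part \textbf{(a)} is essentially the paper's own argument: substitute (\ref{pro.con}) into $\mathbf{T}^{\Omega}(X,Y)=\Omega_X\rho Y-\Omega_Y\rho X-\rho[X,Y]$, note that the $\omega$-terms cancel under antisymmetrization and that the Berwald part vanishes since $D^{\circ}$ is torsion-free, so everything reduces to the symmetry of $p$, which follows from the total symmetry of $P^{\circ}$ (Lemma \ref{pp.10}(b)). Nothing to add there.

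Part \textbf{(b)} is where you genuinely diverge from the paper. The paper does not compute $S^{\Omega}$ directly: it invokes a formula from \cite{r96} expressing the $v$-curvature of any regular connection with $T(\overline{X},\overline{\eta})=0$ entirely in terms of its $(h)hv$-torsion $T$, and then concludes $S^{\Omega}=0$ from part (a). Your route --- observing that the difference tensor $B(X,\overline{Y}):=\Omega_X\overline{Y}-D^{\circ}_X\overline{Y}$ depends on $X$ only through $\rho X$, so that $\Omega_{\gamma\overline{X}}=D^{\circ}_{\gamma\overline{X}}$ exactly, whence $S^{\Omega}=S^{\circ}=0$ by Lemma \ref{pp.10}(a) --- is more elementary and self-contained, since it needs no external identity beyond the definition of the curvature. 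However, one step of your write-up is incorrect as stated: the claim that $[\gamma\overline{X},\gamma\overline{Y}]$ ``need not be vertical'' is false, and the workaround you sketch (matching the bracket contribution against vertical covariant-derivative terms ``by Theorem \ref{pp.3}(a) and antisymmetrization'') is not a proof. The correct and much simpler observation is that the vertical distribution is involutive (it is the kernel of $\rho$, tangent to the fibres of $\pi$), so $[\gamma\overline{X},\gamma\overline{Y}]$ is vertical and $\rho[\gamma\overline{X},\gamma\overline{Y}]=0$; equivalently, this is just $\mathbf{T}^{D^{\circ}}(\gamma\overline{X},\gamma\overline{Y})=-\rho[\gamma\overline{X},\gamma\overline{Y}]=0$. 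Hence $B$ vanishes when evaluated on $\gamma\overline{X}$, on $\gamma\overline{Y}$, and on $[\gamma\overline{X},\gamma\overline{Y}]$, so all three terms of $\mathbf{K}^{\Omega}(\gamma\overline{X},\gamma\overline{Y})\overline{Z}$ coincide with those of $\mathbf{K}^{\circ}$, and $S^{\Omega}=S^{\circ}=0$ follows with no further bookkeeping. With that one-line repair your proof of (b) is complete and valid.
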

\begin{proof}~\\
\noindent \textbf{(a)} As $P^{\circ}$ is totally symmetric (Lemma
\ref{pp.10}(b)), then so is the $\pi$-form $p$. Then, the result
follows from this fact and the expression (\ref{pro.con}).

\noindent\textbf{(b)} For any regular connection $D$ having the
property that $T(\overline{X},\overline{\eta} )=0$, the
$v$-curvature tensor $S$ of $D$ takes the form \cite{r96}:
  \begin{eqnarray*}
  S(\overline{X},\overline{Y})\overline{Z} &=&
 (D_{\gamma \overline{Y}}T)(\overline{X},\overline{Z})-
 (D_{\gamma \overline{X}}T)(\overline{Y},\overline{Z})+T(\overline{X},T(\overline{Y},\overline{Z}))\\
 && -T(\overline{Y},T(\overline{X},\overline{Z}))+T( \widehat{S}(\overline{X},
 \overline{Y}),\overline{Z}).
  \end{eqnarray*}
The result follows from the above relation together with
\textbf{(a)}.
\end{proof}

 \begin{thm}\label{th.6b} Under a projective change,  the $\pi$-tensor field
\vspace{-0.2cm}
\begin{equation}\label{DD}
     \mathbb{P}(\overline{X},\overline{Y})\overline{Z}
  :={P}^{\circ}(\overline{X},\overline{Y})\overline{Z}
  -\frac{1}{n+1}\mathfrak{S}_{\overline{X},\overline{Y},\overline{Z}}
  \{(p\,(\overline{X},\overline{Y}))\overline{Z}\}-\frac{1}{n+1}
  \{(D^{\circ}_{\gamma\overline{Y}}p\,)(\overline{X},\overline{Z})\} \overline{\eta}.
\end{equation}
is invariant.
\end{thm}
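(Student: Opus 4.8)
The plan is to read off the transformation law of $\mathbb{P}$ directly from the transformation law of the hv-curvature tensor $P^{\circ}$ in Theorem \ref{Th.6}(b), and to verify that the three trace-type corrections subtracted in the definition (\ref{DD}) cancel the extra terms there exactly, so that $\widetilde{\mathbb{P}}=\mathbb{P}$.

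First I would recall two facts already obtained in the proof of Theorem \ref{th.6a}. Taking the contracted trace of Theorem \ref{Th.6}(b) with respect to $\overline{Z}$ and using Lemma \ref{le.1} gave $\widetilde{p}=p+(n+1)\stackrel{2}{D^{\circ}}\alpha$, i.e. $\stackrel{2}{D^{\circ}}\alpha=\frac{1}{n+1}(\widetilde{p}-p)$, which is (\ref{eq.8}); differentiating this vertically gives $\stackrel{2}{D^{\circ}}\stackrel{2}{D^{\circ}}\alpha=\frac{1}{n+1}\stackrel{2}{D^{\circ}}(\widetilde{p}-p)$. Substituting both relations into Theorem \ref{Th.6}(b) rewrites $\widetilde{P}^{\circ}-P^{\circ}$ entirely in terms of $\widetilde{p}-p$ and its single vertical covariant derivative, the two correction terms being $\frac{1}{n+1}\mathfrak{S}_{\overline{X},\overline{Y},\overline{Z}}\{((\widetilde{p}-p)(\overline{Y},\overline{Z}))\overline{X}\}$ and $\frac{1}{n+1}(\stackrel{2}{D^{\circ}}(\widetilde{p}-p))(\overline{X},\overline{Y},\overline{Z})\overline{\eta}$.

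Next I would reconcile these with the terms of (\ref{DD}). The cyclic-sum term matches automatically, since a cyclic sum is unchanged under cyclic relabelling of its arguments, so $\mathfrak{S}_{\overline{X},\overline{Y},\overline{Z}}\{((\widetilde{p}-p)(\overline{Y},\overline{Z}))\overline{X}\}=\mathfrak{S}_{\overline{X},\overline{Y},\overline{Z}}\{((\widetilde{p}-p)(\overline{X},\overline{Y}))\overline{Z}\}$. For the $\overline{\eta}$-term I use that $\stackrel{2}{D^{\circ}}\stackrel{2}{D^{\circ}}\alpha$ is totally symmetric (Lemma \ref{le.1}(b)) to move the differentiation index from the first to the second slot, turning $(\stackrel{2}{D^{\circ}}(\widetilde{p}-p))(\overline{X},\overline{Y},\overline{Z})$ into $(D^{\circ}_{\gamma\overline{Y}}(\widetilde{p}-p))(\overline{X},\overline{Z})$, which is the shape of the last term of (\ref{DD}). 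Finally, since the vertical covariant derivative is a projective invariant (Corollary \ref{pp.t5}(a)), $\stackrel{2}{\widetilde{D}^{\circ}}=\stackrel{2}{D^{\circ}}$ as operators, so $D^{\circ}_{\gamma\overline{Y}}\widetilde{p}=\widetilde{D}^{\circ}_{\gamma\overline{Y}}\widetilde{p}$ and the pieces carrying a tilde ($\widetilde{P}^{\circ}$, $\widetilde{p}$, $\widetilde{D}^{\circ}_{\gamma\overline{Y}}\widetilde{p}$) can be separated from the untilded ones; collecting the tilded terms on one side and the untilded on the other yields precisely $\widetilde{\mathbb{P}}(\overline{X},\overline{Y})\overline{Z}=\mathbb{P}(\overline{X},\overline{Y})\overline{Z}$.

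There is no genuine obstacle: as with Theorem \ref{Th.6}, the computation is long but routine, and the only point requiring care is the placement of the arguments of $\widetilde{p}-p$ in the correction terms, which is settled by the symmetry statements of Lemmas \ref{pp.10} and \ref{le.1} together with the triviality that cyclic sums are cyclic.
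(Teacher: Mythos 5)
Your proposal is correct and follows essentially the same route as the paper: it substitutes $\stackrel{2}{D^{\circ}}\alpha=\frac{1}{n+1}(\widetilde{p}-p)$ from (\ref{eq.8}) and its vertical derivative (using the projective invariance of $D^{\circ}_{\gamma\overline{X}}$ from Corollary \ref{pp.t5}) into Theorem \ref{Th.6}(b) and rearranges. Your explicit justification of the argument placement via the total symmetry of $\stackrel{2}{D^{\circ}}\stackrel{2}{D^{\circ}}\alpha$ (Lemma \ref{le.1}(b)) is a detail the paper leaves implicit, but it is the same argument.
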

\begin{proof} From  Equation (\ref{eq.8}) and Corollary \ref{pp.t5}(c), we have
\begin{equation*}
    \stackrel{2}{D^{\circ}}\alpha=\frac{1}{(n+1)}\{
    \widetilde{p}-p\},\quad
 \stackrel{2}{D^{\circ}} \stackrel{2}{D^{\circ}}\alpha=\frac{1}{(n+1)}\{
    \stackrel{2}{\widetilde{D}^{\circ}}\widetilde{p}\,-\stackrel{2}{D^{\circ}}p\}.
\end{equation*}
From which, together with Theorem \ref{Th.6}(b), the result follows.
\end{proof}

\begin{defn}The $\pi$-tensor field  $\mathbb{P}$ of type $\emph{(1,3)}$ on $\p$ defined by
\emph{(\ref{DD})}
 is called the Douglas tensor associated with the projective change \emph{(\ref{GG})}.
\end{defn}

The following result establishes some important properties of the
Douglas tensor.\vspace{-0.2cm}
\begin{prop}\label{ttt.1}The Douglas tensor $\mathbb{P}$  has the properties\emph{:} \vspace{-0.1cm}
\begin{description}
\item[(a)]$\mathbb{P}$ vanishes if ${P}^{\circ}$ vanishes,

\item[(b)] $\widehat{\mathbb{P}}(\overline{X},\overline{Y})=0$,
\item[(c)]  $\mathbb{P}$ is totally symmetric,
\item[(d)] $(D^{\circ}_{\gamma \overline{X}}\mathbb{P})(\overline{Y},
    \overline{Z},  \overline{W})
    =(D^{\circ}_{\gamma \overline{Z}}\mathbb{P})(\overline{Y},
    \overline{X},  \overline{W})$,

\item[(e)] $\mathbb{P}$ is positively homogenous of degree $-1$ in $y$.
\end{description}
\end{prop}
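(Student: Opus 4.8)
The plan is to verify the five properties (a)--(e) by exploiting the defining formula \eqref{DD} together with the known structure of the Berwald hv-curvature tensor $P^{\circ}$ and its contracted trace $p$. For \textbf{(a)}, I would simply observe that if $P^{\circ}=0$ then $p=Tr^{c}_{\overline{Z}}\{P^{\circ}(\overline{X},\overline{Y})\overline{Z}\}=0$, hence $D^{\circ}_{\gamma\overline{Y}}p=0$, and every term on the right-hand side of \eqref{DD} vanishes. For \textbf{(c)}, the total symmetry of $\mathbb{P}$ should follow from the total symmetry of $P^{\circ}$ (Lemma \ref{pp.10}(b)), which forces $p$ to be symmetric in its two arguments, combined with the fact that the cyclic-sum term $\mathfrak{S}_{\overline{X},\overline{Y},\overline{Z}}\{(p(\overline{X},\overline{Y}))\overline{Z}\}$ is manifestly cyclic-symmetric; the only delicate point is the last term $\{(D^{\circ}_{\gamma\overline{Y}}p)(\overline{X},\overline{Z})\}\overline{\eta}$, whose symmetry in all three arguments requires that $\stackrel{2}{D^{\circ}}p$ be totally symmetric --- this is the analogue of Lemma \ref{le.1}(b) with $p$ in place of $\alpha$, and I expect it to reduce to $S^{\circ}=0$ (Lemma \ref{pp.10}(a)) by the same computation as in the proof of Lemma \ref{le.1}(b).

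For \textbf{(b)}, I would set $\overline{Z}=\overline{\eta}$ in \eqref{DD} and use the standard identities for the Berwald connection: $P^{\circ}(\overline{X},\overline{Y})\overline{\eta}=\widehat{P}^{\circ}(\overline{X},\overline{Y})=0$ (since $\widehat{P}^{\circ}$ vanishes for the Berwald connection, because $D^{\circ}_{\gamma\overline{X}}\overline{\eta}$-type terms collapse --- more precisely $\widehat{P}^{\circ}$ is the (v)hv-torsion which vanishes for Berwald), together with $p(\overline{X},\overline{\eta})=Tr^{c}_{\overline{Z}}\{P^{\circ}(\overline{X},\overline{\eta})\overline{Z}\}$ and the trace identity $(D^{\circ}_{\gamma\overline{Y}}p)(\overline{X},\overline{\eta})=-p(\overline{X},\overline{Y})$ coming from homogeneity of $p$ (the analogue of Lemma \ref{le.1}(a) for $p$, using that $p$ is $h(-1)$). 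Substituting, the cyclic-sum term at $\overline{Z}=\overline{\eta}$ and the last term should cancel exactly against $P^{\circ}(\overline{X},\overline{Y})\overline{\eta}$ after using the symmetry of $p$, giving $\widehat{\mathbb{P}}=0$. For \textbf{(e)}, by Lemma \ref{def.1} it suffices to show $D^{\circ}_{\gamma\overline{\eta}}\mathbb{P}=-\mathbb{P}$; this follows term by term since $P^{\circ}$ is $h(-1)$ (Proposition \ref{p.homog.}(a)), $p$ is $h(-1)$, the cyclic-sum term inherits degree $-1$, and the last term --- carrying an extra $\gamma$-derivative but multiplied by $\overline{\eta}$ which is $h(1)$ --- also ends up $h(-1)$; one checks this using $D^{\circ}_{\gamma\overline{\eta}}\overline{\eta}=\overline{\eta}$ and the Leibniz rule for $\stackrel{2}{D^{\circ}}$.

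For \textbf{(d)}, the symmetry $(D^{\circ}_{\gamma\overline{X}}\mathbb{P})(\overline{Y},\overline{Z},\overline{W})=(D^{\circ}_{\gamma\overline{Z}}\mathbb{P})(\overline{Y},\overline{X},\overline{W})$ should be obtained by differentiating \eqref{DD} once more in the $\gamma$-direction and then antisymmetrizing in the appropriate pair of arguments; the difference $(D^{\circ}_{\gamma\overline{X}}\mathbb{P})(\overline{Y},\overline{Z},\overline{W})-(D^{\circ}_{\gamma\overline{Z}}\mathbb{P})(\overline{Y},\overline{X},\overline{W})$ will, after using the total symmetry of $\mathbb{P}$ from \textbf{(c)} and the curvature identity (Ricci-type commutation of two $\stackrel{2}{D^{\circ}}$ derivatives), collapse to a term involving $S^{\circ}$, which vanishes by Lemma \ref{pp.10}(a). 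The main obstacle is exactly this part \textbf{(d)}: it requires carefully commuting vertical covariant derivatives and keeping track of the $\overline{\eta}$-valued last term, so one must invoke the vanishing of $S^{\circ}$ at the right moment --- just as in the proof of Lemma \ref{le.1}(b). Once \textbf{(c)} is in hand, however, \textbf{(d)} is essentially the statement that $\stackrel{2}{D^{\circ}}\mathbb{P}$ is symmetric in its first and third arguments, which is the precise analogue of what was already proved for $\stackrel{2}{D^{\circ}}\stackrel{2}{D^{\circ}}\alpha$, so no genuinely new idea is needed beyond a longer but routine computation.
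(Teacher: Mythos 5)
The paper gives no argument for this proposition (it is stated with ``The proof is easy and we omit it''), so your proposal stands or falls on its own. Parts \textbf{(a)}, \textbf{(b)} and \textbf{(e)} of your sketch are sound: \textbf{(a)} is immediate from (\ref{DD}); your computation for \textbf{(b)} works once one knows $P^{\circ}(\overline{X},\overline{Y})\overline{\eta}=0$, hence $p(\overline{X},\overline{\eta})=0$, for the Berwald connection (facts the paper itself uses later in Section 4), though note that the identity $(D^{\circ}_{\gamma\overline{Y}}p)(\overline{X},\overline{\eta})=-p(\overline{X},\overline{Y})$ comes from $p(\cdot\,,\overline{\eta})=0$ together with $D^{\circ}_{\gamma\overline{Y}}\overline{\eta}=\overline{Y}$, not from homogeneity of $p$; and the Leibniz bookkeeping you describe for \textbf{(e)} is correct.

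The genuine gap is in your justification of \textbf{(c)} and \textbf{(d)}. What is needed in both cases is the symmetry of a \emph{single} vertical covariant derivative between its differentiation slot and the tensor slots: for \textbf{(c)}, that $(\stackrel{2}{D^{\circ}}p)(\overline{Y},\overline{X},\overline{Z})$ is symmetric in $\overline{Y},\overline{X}$; for \textbf{(d)}, the analogous statement for $\stackrel{2}{D^{\circ}}\mathbb{P}$. This is not a Ricci-type commutation of two derivatives, so it does not reduce to $S^{\circ}=0$ for an arbitrary symmetric tensor: already for a symmetric $2$-tensor on a flat space the derivative $\partial_{m}p_{jk}$ need not be symmetric in $m,j$. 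The vanishing of $S^{\circ}$ (together with vertical torsion-freeness) enters, as in Lemma \ref{le.1}(b), only after one knows that the object being differentiated is itself a repeated vertical derivative. Here the essential structural input is $p=\stackrel{2}{D^{\circ}}\omega$ (a fact the paper invokes at the end of Section 4), or equivalently the total symmetry of $\stackrel{2}{D^{\circ}}P^{\circ}$ for the Berwald connection, reflecting that locally $P^{\circ}$ is a third vertical derivative of the spray. Granting $p=\stackrel{2}{D^{\circ}}\omega$ and the symmetry of $p$, the Lemma \ref{le.1}(b) computation does yield the total symmetry of $\stackrel{2}{D^{\circ}}p$, which gives \textbf{(c)}; and \textbf{(d)} should likewise be obtained by exhibiting $\mathbb{P}$ (with the $\overline{\eta}$-term handled via $D^{\circ}_{\gamma\overline{X}}\overline{\eta}=\overline{X}$ and the identities of Lemma \ref{le.1}) as built from such repeated vertical derivatives, rather than by ``commuting two $\stackrel{2}{D^{\circ}}$'s and using $S^{\circ}=0$''. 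The properties themselves are true and your overall plan is workable, but as written the reduction of \textbf{(c)} and \textbf{(d)} to $S^{\circ}=0$ skips this necessary ingredient.
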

\begin{proof} The proof is easy and we omit it.
\end{proof}

\begin{rem}It is worth noting that in
projective Riemannian geometry there is only one fundamental
projectively invariant tensor (Weyl curvature tensor), whereas in
projective Finsler geometry there are two fundamental projectively
invariant tensors, one is the Weyl curvature tensor $W$ and the
other is the Douglas tensor $\mathbb{P}$.
\end{rem}

 We terminate this section by the following result which relates
the Douglas  tensor with the projective connection in a natural
manner. This result says roughly that the Douglas  tensor is
completely determined by the projective connection.\vspace{-0.1cm}
\begin{thm}The   Douglas tensor is precisely
the  $hv$-curvature tensor of the projective connection
\emph{(\ref{pro.con})}.
\end{thm}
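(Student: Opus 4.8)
The plan is to compute directly the $hv$-curvature tensor of the projective connection $\Omega$ given by \eqref{pro.con} and recognize the result as the Douglas tensor $\mathbb{P}$ of \eqref{DD}. Write $\Omega_X\overline{Y}=D^{\circ}_X\overline{Y}-\frac{1}{n+1}\{\omega(\overline{Y})\rho X+\omega(\rho X)\overline{Y}+(p\,(\rho X,\overline{Y}))\overline{\eta}\}$, so $\Omega$ differs from the Berwald connection $D^{\circ}$ by an explicit $\pi$-tensorial term. By definition the $hv$-curvature of $\Omega$ is $\mathbf{K}^{\Omega}(\beta\overline{X},\gamma\overline{Y})\overline{Z}$, where $\beta,\gamma$ here are the horizontal and vertical maps of the \emph{Berwald} connection (these are the canonical objects attached to $(M,L)$, and $\Omega$ shares the same horizontal bundle up to the $\pi$-tensorial correction, but since the correction term is vertical-valued only through $\overline{\eta}$, the relevant decomposition is the Berwald one). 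First I would expand $\Omega_{\beta\overline{X}}\Omega_{\gamma\overline{Y}}\overline{Z}$ and $\Omega_{\gamma\overline{Y}}\Omega_{\beta\overline{X}}\overline{Z}$ and $\Omega_{[\beta\overline{X},\gamma\overline{Y}]}\overline{Z}$ using the defining formula, keeping track of which terms come from $D^{\circ}$ alone (these reassemble into $P^{\circ}(\overline{X},\overline{Y})\overline{Z}$) and which involve $\omega$ or $p$.

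The key simplifications come from the results already established. By Corollary \ref{pp.t5}(c), $D^{\circ}_{\gamma\overline{X}}$ acting on $\pi$-vector fields is a projective invariant, and more to the point, the vertical derivatives of $\omega$ and of $p$ behave well: since $\omega(\overline{Y})=Tr^c_X\{D^{\circ}_X\overline{Y}\}$ and the vertical Berwald derivative of a trace is the trace of the vertical derivative, one gets $(\stackrel{2}{D^{\circ}}\omega)(\overline{Y},\overline{Z})$ expressible via $p$ itself (using $p(\overline{X},\overline{Y})=Tr^c_{\overline{Z}}\{P^{\circ}(\overline{X},\overline{Y})\overline{Z}\}$ and Theorem \ref{pp.3}(b)). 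The crucial structural inputs are Theorem \ref{pp.3}(b), namely $D^{\circ}_{\beta\overline{X}}\overline{Y}=K[\beta\overline{X},\gamma\overline{Y}]$, together with $\rho[\beta\overline{X},\gamma\overline{Y}]$ and $K[\gamma\overline{X},\beta\overline{Y}]$ identities, and Lemma \ref{pp.10}: $S^{\circ}=0$ and $P^{\circ}$ is totally symmetric. The total symmetry of $P^{\circ}$ (hence of $p$) lets one symmetrize the cross terms $\omega(\overline{Z})\rho(\beta\overline{X})$-type contributions into the cyclic sum $\mathfrak{S}_{\overline{X},\overline{Y},\overline{Z}}\{(p\,(\overline{X},\overline{Y}))\overline{Z}\}$ appearing in \eqref{DD}, and the $\overline{\eta}$-valued term should collapse to $-\frac{1}{n+1}(D^{\circ}_{\gamma\overline{Y}}p)(\overline{X},\overline{Z})\overline{\eta}$ after using $(\stackrel{2}{D^{\circ}}\!\alpha)(\overline{\eta},\cdot)=0$-type relations from Lemma \ref{le.1} transcribed to $p$ and $\omega$.

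After this bookkeeping, the terms organize as $\mathbf{K}^{\Omega}(\beta\overline{X},\gamma\overline{Y})\overline{Z}=P^{\circ}(\overline{X},\overline{Y})\overline{Z}-\frac{1}{n+1}\mathfrak{S}_{\overline{X},\overline{Y},\overline{Z}}\{p(\overline{X},\overline{Y})\overline{Z}\}-\frac{1}{n+1}(D^{\circ}_{\gamma\overline{Y}}p)(\overline{X},\overline{Z})\overline{\eta}$, which is exactly $\mathbb{P}(\overline{X},\overline{Y})\overline{Z}$ by \eqref{DD}. Independently, since $\Omega$ is projective (Theorem \ref{th.6a}), its $hv$-curvature is automatically a projective invariant, which is consistent with Theorem \ref{th.6b}; one may alternatively argue that \emph{any} projectively invariant $\pi$-tensor built in the natural way from $P^{\circ}$ with the right homogeneity (degree $-1$, by Proposition \ref{ttt.1}(e)) and trace-freeness ($\widehat{\mathbb{P}}=0$, Proposition \ref{ttt.1}(b)) must coincide with $\mathbb{P}$, but the direct computation is cleaner. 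The main obstacle I anticipate is the careful handling of the vertical covariant derivative of the correction term: one must verify that the brackets $[\beta\overline{X},\gamma\overline{Y}]$ interact with the $\overline{\eta}$-valued piece exactly so as to produce a single $(D^{\circ}_{\gamma\overline{Y}}p)$ term rather than a symmetrized combination, and this relies on the precise normalization $\frac{1}{n+1}$ and on $p$ being totally symmetric; getting the index/argument placement right in $(D^{\circ}_{\gamma\overline{Y}}p)(\overline{X},\overline{Z})$ versus $(D^{\circ}_{\gamma\overline{Z}}p)(\overline{X},\overline{Y})$ is where the computation is most error-prone, and Proposition \ref{ttt.1}(d) is the consistency check that the chosen placement is correct.
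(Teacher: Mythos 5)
There is a genuine gap at the very first step of your computation: you take the $hv$-curvature of $\Omega$ to be $\mathbf{K}^{\Omega}(\beta\overline{X},\gamma\overline{Y})\overline{Z}$ with $\beta$ the \emph{Berwald} horizontal map, on the grounds that $\Omega$ ``shares the same horizontal bundle'' as $D^{\circ}$ and that the correction is vertical-valued only through $\overline{\eta}$. That is not true. The horizontal space of a regular connection is the kernel of its own deflection map, and from (\ref{pro.con}) together with $p(\overline{X},\overline{\eta})=0$ one gets $\bar{K}(X)=\Omega_{X}\overline{\eta}=K(X)-\frac{1}{n+1}\{\omega(\overline{\eta})\rho X+\omega(\rho X)\overline{\eta}\}$, which differs from $K$ whenever $\omega\neq 0$; accordingly the horizontal map of $\Omega$ is $\bar{\beta}(\overline{X})=\beta(\overline{X})+\frac{1}{n+1}\{\omega(\overline{\eta})\gamma\overline{X}+\omega(\overline{X})\gamma\overline{\eta}\}$, not $\beta$. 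The paper's proof is exactly the direct computation you envisage, but carried out with $\bar{\beta}$ in the horizontal slot: it first determines $\bar{K}$ and $\bar{\beta}$ (using Proposition \ref{tt}(a) and $p(\overline{X},\overline{\eta})=0$), then expands $\Omega_{\bar{\beta}\overline{X}}\Omega_{\gamma\overline{Y}}\overline{Z}$, $\Omega_{\gamma\overline{Y}}\Omega_{\bar{\beta}\overline{X}}\overline{Z}$ and $\Omega_{[\bar{\beta}\overline{X},\gamma\overline{Y}]}\overline{Z}$, and closes with Lemma \ref{pp.10}(a) and the relation $p=\stackrel{2}{D^{\circ}}\omega$. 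The extra bracket and derivative terms generated by the difference $\bar{\beta}-\beta$ (e.g.\ the $[\gamma\overline{X},\gamma\overline{Y}]$-type contributions weighted by $\omega(\overline{\eta})$ and $\omega(\overline{X})$) are precisely what your sketch never sees.

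Your shortcut can be repaired, but with a justification different from the one you give: $\bar{\beta}\overline{X}-\beta\overline{X}$ is vertical, the classical curvature $\mathbf{K}^{\Omega}$ is $\mathfrak{F}(\T M)$-bilinear in its two vector-field arguments, and the $v$-curvature of $\Omega$ vanishes by Proposition \ref{tt}(b); hence $\mathbf{K}^{\Omega}(\bar{\beta}\overline{X},\gamma\overline{Y})\overline{Z}=\mathbf{K}^{\Omega}(\beta\overline{X},\gamma\overline{Y})\overline{Z}$, so a computation in the Berwald horizontal direction does compute the $hv$-curvature of $\Omega$. As written, however, you neither state nor use this fact, and the remaining ``bookkeeping'' is asserted rather than performed, so the identity with $\mathbb{P}$ is not yet established. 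If you pursue your route, make explicit that the collapse of the $\overline{\eta}$-component to the single term $-\frac{1}{n+1}(D^{\circ}_{\gamma\overline{Y}}p)(\overline{X},\overline{Z})\overline{\eta}$ rests on $p=\stackrel{2}{D^{\circ}}\omega$ and the total symmetry of $p$ (Lemma \ref{pp.10}(b)), exactly as in the paper's final step.
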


\begin{proof} Let $\bar{K}$ and $\bar{\beta}$ be the connection map
and the horizontal map of the projective connection $\Omega$
respectively.
 By Theorem \ref{th.6a} and  the fact that
 $p\,(\overline{X},\overline{\eta})=0$, the connection map $\bar{K}$
  takes the form
\begin{equation*}
  \bar{K}(X)= K(X) -\frac{1}{n+1}\{\omega( \overline{\eta})\rho X+
\omega(\rho X) \overline{\eta}\}.
\end{equation*}
From which,   together with   Proposition \ref{tt}(a)  and
Proposition 2.2 of \cite{r96}, the horizontal  map $\bar{\beta}$  is
given by
\begin{equation}\label{K}
  \bar{\beta}(\overline{X})= \beta(\overline{X}) +\frac{1}{n+1}\{\omega( \overline{\eta})\gamma \overline{X}+
\omega( \overline{X}) \gamma\overline{\eta}\}.
\end{equation}

Now,  after somewhat long  calculations, using (\ref{K}) and Theorem
 \ref{pp.3}, we get
\begin{eqnarray*}
 \Omega_{\bar{\beta} \overline{X}}\Omega_{\gamma \overline{Y}}\overline{Z}
 &=& D^{\circ} _{\beta \overline{X}}D^{\circ}_{\gamma \overline{Y}}\overline{Z}
    -\frac{1}{n+1}\{\omega(D^{\circ}_{\gamma \overline{Y}}\overline{Z})\overline{X}+
\omega(\overline{X})D^{\circ}_{\gamma \overline{Y}}\overline{Z} +
(p\,(\overline{X},D^{\circ}_{\gamma
\overline{Y}}\overline{Z}))\overline{\eta}\}\\
 &&+\frac{1}{n+1}\{\omega( \overline{\eta})D^{\circ}_{\gamma \overline{X}}D^{\circ}_{\gamma \overline{Y}}
 \overline{Z}+
\omega( \overline{X}) D^{\circ}_{\gamma
\overline{\eta}}D^{\circ}_{\gamma \overline{Y}}\overline{Z}\},
\end{eqnarray*}

\begin{eqnarray*}
 \Omega_{\gamma \overline{Y}}\Omega_{\bar{\beta} \overline{X}}\overline{Z}
     &=& D^{\circ}_{\gamma \overline{Y}}D^{\circ} _{\beta \overline{X}}\overline{Z} -\frac{1}{n+1}
    \{(D^{\circ}_{\gamma \overline{Y}}\omega(\overline{X}))\overline{Z}+
    \omega(\overline{X})D^{\circ}_{\gamma
    \overline{Y}}\overline{Z}+\\
&&+ (D^{\circ}_{\gamma \overline{Y}}\omega(\overline{Z}))
\overline{X}+ \omega(\overline{Z}) D^{\circ}_{\gamma
\overline{Y}}\overline{X}\} -\frac{1}{n+1}\{( D^{\circ}_{\gamma
\overline{Y}}p\,( \overline{X},
\overline{Z}))\overline{\eta}+(p\,(\overline{X},
\overline{Z}))\overline{Y}\}\\
&&+\frac{1}{n+1}\{(D^{\circ}_{\gamma \overline{Y}}\omega(
\overline{\eta}))D^{\circ}_{\gamma \overline{X}}
 \overline{Z}+
( D^{\circ}_{\gamma \overline{Y}}\omega(
\overline{X}))D^{\circ}_{\gamma
\overline{\eta}}\overline{Z}\}\\
 &&+\frac{1}{n+1}\{\omega( \overline{\eta})D^{\circ}_{\gamma \overline{Y}}D^{\circ}_{\gamma \overline{X}}
 \overline{Z}+
\omega( \overline{X}) D^{\circ}_{\gamma
\overline{Y}}D^{\circ}_{\gamma \overline{\eta}}\overline{Z}\}
\end{eqnarray*}
and
\begin{eqnarray*}
 \Omega_{[\bar{\beta} \overline{X},\gamma \overline{Y}]}\overline{Z}
&=&D^{\circ} _{[\beta \overline{X},\gamma \overline{Y}]}\overline{Z}
 +\frac{1}{n+1}\{\omega(\overline{Z})D^{\circ}_{\gamma \overline{Y}}\overline{X}
 +\omega(D^{\circ}_{\gamma \overline{Y}}\overline{X})\overline{Z} +
(p\,(D^{\circ}_{\gamma
\overline{Y}}\overline{X},\overline{Z}))\overline{\eta}\}\\
 &&+\frac{1}{n+1}\{\omega( \overline{\eta})D^{\circ}_{[\gamma \overline{X},\gamma
 \overline{Y}]} \overline{Z}+
\omega( \overline{X}) D^{\circ}_{[\gamma \overline{\eta},\gamma
 \overline{X}]} \overline{Z}\}\\
&&-\frac{1}{n+1}\{(D^{\circ}_{\gamma \overline{Y}}\omega(
\overline{\eta}))D^{\circ}_{\gamma \overline{X}}
 \overline{Z}+
( D^{\circ}_{\gamma \overline{Y}} \omega(
\overline{X}))D^{\circ}_{\gamma \overline{\eta}}\overline{Z}\}.
\end{eqnarray*}
  The result follows from the above three relations together with Lemma
  \ref{pp.10}(a) and the fact that
  $p=\stackrel{2}{D^{\circ}}\omega$.
 \end{proof}

\Section{Projectively flat manifolds}

 \par
 In this section we investigate  intrinsically  the projective change of some
important special Finsler manifolds, namely, the Berwald, Douglas
and  projectively flat Finsler manifolds.
 Moreover, the relationship between  projectively flat Finsler manifolds and
the Douglas tensor (Weyl tensor) is obtained.

\begin{defn}\label{df.1} A Finsler manifold $(M,L)$ is said to be
a Berwald manifold if the $(h)hv$-torsion tensor $T$ of the Cartan
connection $\nabla$ is horizontally
  parallel. That is,
  $$\nabla_{\beta \overline{X}}\,T =0.$$
\end{defn}
\begin{defn}\label{df.2} A Finsler manifold $(M,L)$ is said to be
a Douglas manifold if its Douglas tensor  vanishes identically:
$\mathbb{P} =0.$
\end{defn}
\begin{thm}\label{th.a11} Every Berwald manifold  is a Douglas manifold.
\end{thm}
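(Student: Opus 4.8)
The plan is to reduce everything to the vanishing of the $hv$-curvature tensor $P^{\circ}$ of the Berwald connection. Indeed, the Douglas tensor $\mathbb{P}$ defined in \eqref{DD} is manufactured solely out of $P^{\circ}$, its contracted trace $p$, and one vertical covariant derivative of $p$; and Proposition \ref{ttt.1}\textbf{(a)} already records that $P^{\circ}=0$ forces $\mathbb{P}=0$. Hence the entire content of the theorem is the implication: \emph{$(M,L)$ Berwald $\Longrightarrow P^{\circ}=0$.}

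To establish this I would first recall the link between the two canonical connections of $(M,L)$. The Berwald connection $D^{\circ}$ and the Cartan connection $\nabla$ have the same vertical covariant derivative and differ in the horizontal direction precisely by the $(h)hv$-torsion tensor $T$ of $\nabla$, i.e. $\nabla_{\beta\overline{X}}\,\overline{Y}=D^{\circ}_{\beta\overline{X}}\,\overline{Y}+T(\overline{X},\overline{Y})$ (cf. \cite{r96}). Feeding this into the defining formula for the curvature tensor of $D^{\circ}$, using $K[\beta\overline{X},\gamma\overline{Y}]=D^{\circ}_{\beta\overline{X}}\overline{Y}$ (Theorem \ref{pp.3}), and extracting the mixed part, one obtains the standard identity expressing $P^{\circ}$ through the horizontal covariant derivative of $T$, schematically $P^{\circ}(\overline{X},\overline{Y})\overline{Z}=(\nabla_{\beta\overline{Y}}T)(\overline{X},\overline{Z})$ (up to the sign/permutation convention in force). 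By Definition \ref{df.1}, a Berwald manifold satisfies $\nabla_{\beta\overline{X}}T=0$, so the right-hand side vanishes identically and $P^{\circ}=0$.

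With $P^{\circ}=0$ in hand the conclusion is immediate: $p=Tr^{c}_{\,\,\,\overline{Z}}\{P^{\circ}(\overline{X},\overline{Y})\overline{Z}\}=0$, hence also $D^{\circ}_{\gamma\overline{Y}}p=0$, and all three terms on the right-hand side of \eqref{DD} vanish. Thus $\mathbb{P}\equiv 0$, i.e. $(M,L)$ is a Douglas manifold in the sense of Definition \ref{df.2}. The only non-routine point is the identity $P^{\circ}=\nabla_{\beta\bullet}T$ (equivalently, the classical fact that Berwald spaces are exactly those whose Berwald $hv$-curvature vanishes); everything after it is a one-line trace computation. If one prefers to bypass that identity, one may argue locally — on a Berwald manifold the Berwald connection coefficients depend on $x$ only, so their $y$-derivatives, which constitute $P^{\circ}$, vanish — but the intrinsic route via \cite{r96} is cleaner and matches the spirit of the paper.
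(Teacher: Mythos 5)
Your closing reduction is sound and is exactly how the paper finishes: once $P^{\circ}=0$ one gets $p=0$, hence $\stackrel{2}{D^{\circ}}p=0$, and (\ref{DD}) gives $\mathbb{P}=0$ (this is Proposition \ref{ttt.1}(a)). The gap is in your derivation of $P^{\circ}=0$ from Definition \ref{df.1}. The relation you ``recall'' between the two canonical connections is not the correct one: in the pullback formalism the Cartan and Berwald connections differ \emph{vertically} by the $(h)hv$-torsion, $D^{\circ}_{\gamma\overline{X}}\overline{Y}=\nabla_{\gamma\overline{X}}\overline{Y}-T(\overline{X},\overline{Y})$, and \emph{horizontally} by the $(v)hv$-torsion, $D^{\circ}_{\beta\overline{X}}\overline{Y}=\nabla_{\beta\overline{X}}\overline{Y}+\widehat{P}(\overline{X},\overline{Y})$ --- the opposite of what you assert. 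Consequently the ``standard identity'' $P^{\circ}(\overline{X},\overline{Y})\overline{Z}=(\nabla_{\beta\overline{Y}}T)(\overline{X},\overline{Z})$ on which your whole argument rests is false in general: since $P^{\circ}$ is totally symmetric (Lemma \ref{pp.10}(b)), while $T(\overline{\eta},\cdot)=0$ and $\nabla_{\beta\overline{Y}}\overline{\eta}=0$ give $(\nabla_{\beta\overline{Y}}T)(\overline{\eta},\overline{Z})=0$, that identity would force $(\nabla_{\beta\overline{\eta}}T)(\overline{Y},\overline{Z})=0$ for \emph{every} Finsler metric, i.e. the $(v)hv$-torsion $\widehat{P}$ of the Cartan connection would always vanish (every Finsler space Landsberg), which is absurd. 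The correct general relation, which the paper quotes from \cite{r96}, expresses $P^{\circ}$ through the Cartan $hv$-curvature $P$, the torsion $\widehat{P}$, $\nabla_{\beta}T$ and several interaction terms; moreover one needs the further identities (also from \cite{r96}) showing that $P$ and $\widehat{P}$ are themselves built from $\nabla_{\beta}T$, so that $\nabla_{\beta\overline{X}}T=0$ annihilates every term and yields $P^{\circ}=0$.

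Your fallback local argument does not repair this: asserting that on a Berwald manifold the Berwald connection coefficients depend on $x$ alone is precisely the classical equivalence between Definition \ref{df.1} ($\nabla_{\beta\overline{X}}T=0$) and the vanishing of the Berwald $hv$-curvature, i.e. it assumes the very implication you are trying to prove. So the skeleton of your proof (Berwald $\Rightarrow P^{\circ}=0 \Rightarrow p=0 \Rightarrow \mathbb{P}=0$) agrees with the paper's, but the key middle step must be replaced by the chain of identities the paper takes from \cite{r96}.
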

\begin{proof} We first show that the $hv$-curvature  $P^{\circ}$  of  a Berwald manifold vanishes. \\
 The $hv$-curvature $P$ and the $(v)hv$-torsion of the
  Cartan connection can be put respectively in the form \cite{r96}:
\begin{eqnarray*}
 P(\overline{X},\overline{Y},\overline{Z}, \overline{W}) &=&
   g(( \nabla_{\beta \overline{Z}}T)(\overline{X}, \overline{Y}),\overline{W})
   -g(( \nabla_{\beta \overline{W}}T)(\overline{X},
   \overline{Y}),\overline{Z})\\
   &&+ g(T(\overline{X},\overline{Z}),\widehat{P}(\overline{W},\overline{Y}))
   -g(T(\overline{X},\overline{W}),\widehat{P}(\overline{Z},
   \overline{Y})),
\end{eqnarray*}
$$\widehat{P}(\overline{X},\overline{Y})=(\nabla_{\beta \overline{X}}\,T)(\overline{X},\overline{Y}).$$
But since  $\nabla_{\beta \overline{X}}\,T =0$, then both
$\widehat{P}$ and $P$ vanish.
\par
On the other hand, the $hv$-curvatures $P^{\circ}$ and
 $P$ are related by \cite{r96}:
\begin{eqnarray*}
 {{P}}^{\circ}(\overline{X},\overline{Y})\overline{Z}&=&
P(\overline{X},\overline{Y})\overline{Z}+
  (\nabla_{\gamma \overline{Y}}\widehat{P})(\overline{X},\overline{Z})
  +\widehat{P}(T(\overline{Y},\overline{X}),\overline{Z})
  +\widehat{P}(\overline{X},T(\overline{Y},\overline{Z}))\\
&&   +(\nabla_{\beta\overline{X}}T)(\overline{Y},\overline{Z})-
   T(\overline{Y}, \widehat{P}(\overline{X},\overline{Z}))-T(
\widehat{P}(\overline{X}, \overline{Y}), \overline{Z}).
\end{eqnarray*}
From which, together with $\nabla_{\beta \overline{X}}\,T
=\widehat{P}=P=0$, it follows that $ {P}^{\circ}=0$.
\par
Now, the vanishing of $ {P}^{\circ}$ implies that $p=0$.
 Hence, by Theorem
\ref{th.6b}, the Douglas tensor $\mathbb{P}$ vanishes identically.
\end{proof}
As a consequence of Theorem \ref{th.a11}, we retrieve intrinsically
a result of Matsumoto \cite{r105}:
\begin{cor}\label{cor.55}
 A Finsler manifold which is projective to a Berwald manifold is a Douglas manifold.
\end{cor}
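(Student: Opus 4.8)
The plan is to deduce Corollary \ref{cor.55} directly from Theorem \ref{th.a11} together with the invariance of the Douglas tensor under projective changes (Theorem \ref{th.6b}). Suppose $(M,\widetilde{L})$ is a Finsler manifold that is projectively related to a Berwald manifold $(M,L)$; that is, $\widetilde{L}$ is obtained from $L$ by a projective change of the form \emph{(\ref{GG})}. We must show that the Douglas tensor $\widetilde{\mathbb{P}}$ of $(M,\widetilde{L})$ vanishes identically.

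First I would invoke Theorem \ref{th.a11}: since $(M,L)$ is a Berwald manifold, its Douglas tensor $\mathbb{P}$ vanishes, $\mathbb{P}=0$. Next, the key point is that the Douglas tensor is a projective invariant. Indeed, by Theorem \ref{th.6b}, under the projective change \emph{(\ref{GG})} relating $(M,L)$ and $(M,\widetilde{L})$ we have $\widetilde{\mathbb{P}}=\mathbb{P}$. Combining these two facts gives $\widetilde{\mathbb{P}}=\mathbb{P}=0$, so $(M,\widetilde{L})$ is a Douglas manifold by Definition \ref{df.2}. This completes the argument.

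There is essentially no obstacle here; the corollary is an immediate formal consequence of the two previously established results. The only point meriting a word of care is that the statement ``projective to a Berwald manifold'' must be read as ``there exists a Berwald manifold to which $(M,\widetilde{L})$ is projectively related,'' and the projective relation is symmetric (by the very definition of projective change and Theorem \ref{pp.t3}), so it does not matter which of the two manifolds plays the role of the Berwald one when applying the invariance in Theorem \ref{th.6b}. Hence the proof reduces to the single line: a manifold projective to a Berwald manifold has the same (vanishing) Douglas tensor, and is therefore Douglas.
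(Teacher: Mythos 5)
Your proof is correct and follows essentially the same route as the paper: the Berwald manifold has vanishing Douglas tensor (Theorem \ref{th.a11}, which the paper re-derives in-line via $\widetilde{P^{\circ}}=0\Rightarrow\widetilde{p}=0$), and the projective invariance of $\mathbb{P}$ (Theorem \ref{th.6b}) transfers this vanishing to the projectively related manifold. Your remark on the symmetry of the projective relation is a harmless clarification, not a departure from the paper's argument.
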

\begin{proof}
 Since $(M,\widetilde{L})$ is a Berwald manifold, then,
 its $hv$-curvature $\widetilde{P^{\circ}}$
vanishes and consequently $\widetilde{p\,}=0$. Hence, the Douglas
tensor $\mathbb{P}$, which coincides with $\widetilde{\mathbb{P}}$
(by Theorem \ref{th.6b}), vanishes identically. \
\end{proof}
Now, we focus our attention  on  projectively flat Finsler
manifolds.\vspace{-0.2cm}
\begin{defn}\label{hv.h}Under a projective change, a Finsler manifold $(M,L)$ is said to be:\\
--  $hv$-projectively flat  if the $hv$-curvature tensor
$\widetilde{P^{\circ}}$  vanishes.\\
-- $h$-projectively flat  if the $h$-curvature tensor
$\widetilde{R^{\circ}}$  vanishes.\\
-- projectively flat  if both $\widetilde{P^{\circ}}$ and
$\widetilde{R^{\circ}}$  vanish.
\end{defn}

In view of the proof of Corollary \ref{cor.55} and Definition
\ref{hv.h}, we have\vspace{-0.2cm}
\begin{cor} Under the  projective
change \emph{(\ref{GG})}, if $(M,L)$ is  $hv$-projectively flat,
then it is a Douglas manifold.
\end{cor}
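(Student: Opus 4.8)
The plan is to reduce the statement to the projective invariance of the Douglas tensor established in Theorem \ref{th.6b}, following the pattern of the proof of Corollary \ref{cor.55} but with the hypothesis weakened from ``Berwald'' to ``$hv$-projectively flat''. First I would unpack the assumption: by Definition \ref{hv.h}, $hv$-projective flatness of $(M,L)$ under the projective change \emph{(\ref{GG})} means precisely that the $hv$-curvature tensor $\widetilde{P^{\circ}}$ of the Berwald connection associated with $(M,\widetilde{L})$ vanishes identically.

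Next I would pass to the Douglas tensor of $(M,\widetilde{L})$. By the defining formula \emph{(\ref{DD})}, $\widetilde{\mathbb{P}}$ is built entirely from $\widetilde{P^{\circ}}$, its contracted trace $\widetilde{p\,} = Tr^{c}_{\,\,\,\overline{Z}}\{\widetilde{P^{\circ}}(\overline{X},\overline{Y})\overline{Z}\}$, and the vertical covariant derivative $\stackrel{2}{D^{\circ}}\widetilde{p\,}$. Since $\widetilde{P^{\circ}}=0$, we immediately get $\widetilde{p\,}=0$ and hence $\stackrel{2}{D^{\circ}}\widetilde{p\,}=0$, so every term on the right-hand side of \emph{(\ref{DD})} written for $(M,\widetilde{L})$ vanishes; therefore $\widetilde{\mathbb{P}}=0$. (This is just Proposition \ref{ttt.1}(a) applied to $(M,\widetilde{L})$.)

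Finally, by Theorem \ref{th.6b} the Douglas tensor is projectively invariant, i.e. $\mathbb{P}=\widetilde{\mathbb{P}}$. Combining this with the previous step yields $\mathbb{P}=0$, which by Definition \ref{df.2} says that $(M,L)$ is a Douglas manifold. I do not expect any genuine obstacle: the only thing to verify is that the vanishing of $\widetilde{P^{\circ}}$ forces the auxiliary objects $\widetilde{p\,}$ and $\stackrel{2}{D^{\circ}}\widetilde{p\,}$ entering \emph{(\ref{DD})} to vanish as well, which is immediate from $\widetilde{p\,}$ being a trace of $\widetilde{P^{\circ}}$. The whole argument is the natural extension of Corollary \ref{cor.55}.
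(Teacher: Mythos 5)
Your proposal is correct and follows essentially the same route as the paper: the paper's argument is precisely ``$hv$-projectively flat $\Rightarrow \widetilde{P^{\circ}}=0 \Rightarrow \widetilde{p\,}=0 \Rightarrow \widetilde{\mathbb{P}}=0$'', combined with the projective invariance of the Douglas tensor from Theorem \ref{th.6b} to conclude $\mathbb{P}=\widetilde{\mathbb{P}}=0$. The only cosmetic remark is that the vertical derivative entering $\widetilde{\mathbb{P}}$ is $\stackrel{2}{\widetilde{D}^{\circ}}\widetilde{p\,}$ rather than $\stackrel{2}{D^{\circ}}\widetilde{p\,}$, but by Corollary \ref{pp.t5} these coincide, and in any case both vanish once $\widetilde{p\,}=0$.
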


The converse is also true under a certain condition: Consider a
projective change for which the projective factor $\lambda$ (or
$\alpha$) satisfies the condition
\begin{equation} \label{eq.x4}
    \stackrel{2}{D^{\circ}}\alpha=-\frac{1}{(n+1)}\, p.
\end{equation}
From which, we get
\begin{equation}\label{EQ.12p}
  \stackrel{2}{D^{\circ}}\stackrel{2}{D^{\circ}}\alpha=-\frac{1}{(n+1)}
   \stackrel{2}{D^{\circ}}p.
\end{equation}

On the other hand, if  the Douglas  tensor  $\mathbb{P}$ vanishes,
then, by Theorem \ref{th.6b}, we obtain
$${P}^{\circ}(\overline{X},\overline{Y})\overline{Z}
  =\frac{1}{n+1}\mathfrak{S}_{\overline{X},\overline{Y},\overline{Z}}
  \{(p\,(\overline{X},\overline{Y}))\overline{Z}\}+\frac{1}{n+1}
  \{(\stackrel{2}{D^{\circ}}p)(\overline{Y},\overline{X}, \overline{Z})\} \overline{\eta}.$$
    From which,
together with (\ref{ff}) and Lemma \ref{le.1}, the $hv$-curvature
$\widetilde{P}^{\circ}$ has the form
\begin{eqnarray*}
\widetilde{P}^{\circ}(\overline{X},\overline{Y})\overline{Z}
  &=&\frac{1}{n+1}\mathfrak{S}_{\overline{X},\overline{Y},\overline{Z}}
  \{(p\,(\overline{X},\overline{Y}))\overline{Z}\}+\frac{1}{n+1}
  \{(\stackrel{2}{D^{\circ}}p)(\overline{Y},\overline{X}, \overline{Z})\}
  \overline{\eta}\\
  &&+  \mathfrak{S}_{\overline{X},\overline{Y},\overline{Z}}
  \{((\stackrel{2}{D^{\circ}}\alpha)(\overline{Y},\overline{Z}))\overline{X}\}+
  (\stackrel{2}{D^{\circ}}\stackrel{2}{D^{\circ}}\alpha)(\overline{X},\overline{Y}, \overline{Z})\overline{\eta}\\
  &=&\mathfrak{S}_{\overline{X},\overline{Y},\overline{Z}}
  \{\{(\stackrel{2}{D^{\circ}}\alpha)(\overline{Y},\overline{Z})+\frac{1}{(n+1)}
   p(\overline{Y},\overline{Z})\}\overline{X}\}\\
   &&\{(\stackrel{2}{D^{\circ}}\stackrel{2}{D^{\circ}}\alpha)(\overline{Y},\overline{X}, \overline{Z})
   +\frac{1}{(n+1)}
   (\stackrel{2}{D^{\circ}}p)(\overline{Y},\overline{X}, \overline{Z})\}\overline{\eta}.
\end{eqnarray*}
\par
Now, using (\ref{eq.x4}) and (\ref{EQ.12p}),
${\widetilde{{P}^{\circ}}}$ vanishes.  Hence, we have

\begin{thm}\label{tth.1} Under a  projective change satisfying
\emph{(\ref{eq.x4})}, a Finsler manifold $(M,L)$ is
$hv$-projectively flat if, and only if, $(M,L)$ is a Douglas
manifold.
\end{thm}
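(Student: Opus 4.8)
The plan is to establish the two implications separately; only the converse uses the hypothesis (\ref{eq.x4}). The direction ``$hv$-projectively flat $\Rightarrow$ Douglas'' needs no extra assumption: it is precisely the corollary stated immediately before this theorem. Indeed, if $\widetilde{P^{\circ}}=0$ then $\widetilde{p\,}=0$ by contraction, and since by Theorem \ref{th.6b} the Douglas tensor is projectively invariant, $\mathbb{P}=\widetilde{\mathbb{P}}$ is built from $\widetilde{P^{\circ}}$ and $\widetilde{p\,}$ alone, hence vanishes. So it remains to treat the converse.

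For the converse, I would assume $\mathbb{P}=0$ and first use the defining formula (\ref{DD}) of Theorem \ref{th.6b} to solve for the Berwald $hv$-curvature, obtaining an expression for $P^{\circ}(\overline{X},\overline{Y})\overline{Z}$ purely in terms of $p$ and $\stackrel{2}{D^{\circ}}p$ (the displayed formula in the paragraph preceding this theorem). Then I would feed this into the projective transformation law (\ref{ff}) for $P^{\circ}$ and simplify: using the total symmetry of $p$ (a consequence of Lemma \ref{pp.10}(b)) together with the symmetry properties of $\stackrel{2}{D^{\circ}}\alpha$ and $\stackrel{2}{D^{\circ}}\stackrel{2}{D^{\circ}}\alpha$ from Lemma \ref{le.1}, the various contributions regroup so that
$$\widetilde{P^{\circ}}(\overline{X},\overline{Y})\overline{Z}=\mathfrak{S}_{\overline{X},\overline{Y},\overline{Z}}\{((\stackrel{2}{D^{\circ}}\alpha)(\overline{Y},\overline{Z})+\tfrac{1}{n+1}p(\overline{Y},\overline{Z}))\overline{X}\}+((\stackrel{2}{D^{\circ}}\stackrel{2}{D^{\circ}}\alpha)(\overline{Y},\overline{X},\overline{Z})+\tfrac{1}{n+1}(\stackrel{2}{D^{\circ}}p)(\overline{Y},\overline{X},\overline{Z}))\overline{\eta}.$$
Finally I would invoke the hypothesis (\ref{eq.x4}), which annihilates the bracket $\stackrel{2}{D^{\circ}}\alpha+\tfrac{1}{n+1}p$ occurring in the cyclic sum, together with its vertical covariant derivative (\ref{EQ.12p}), which annihilates the coefficient of $\overline{\eta}$; hence $\widetilde{P^{\circ}}=0$, i.e. $(M,L)$ is $hv$-projectively flat.

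The only delicate point is the middle step: one must substitute the three-term expression for $P^{\circ}$, the cyclic sum $\mathfrak{S}_{\overline{X},\overline{Y},\overline{Z}}\{((\stackrel{2}{D^{\circ}}\alpha)(\overline{Y},\overline{Z}))\overline{X}\}$ and the term $(\stackrel{2}{D^{\circ}}\stackrel{2}{D^{\circ}}\alpha)(\overline{X},\overline{Y},\overline{Z})\overline{\eta}$ coming from (\ref{ff}), and then reorganize the $\overline{X}$-type and $\overline{\eta}$-type contributions so that (\ref{eq.x4}) and (\ref{EQ.12p}) apply termwise. The total symmetry statements in Lemma \ref{le.1} are exactly what make this regrouping legitimate; once they are in hand, nothing but routine algebra remains.
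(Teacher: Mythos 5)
Your proposal is correct and coincides with the paper's own argument: the forward implication is exactly the preceding corollary (contracting $\widetilde{P^{\circ}}=0$ gives $\widetilde{p}=0$, and projective invariance of $\mathbb{P}$ via Theorem \ref{th.6b} does the rest), while the converse reproduces the paper's computation of solving (\ref{DD}) for $P^{\circ}$, substituting into (\ref{ff}), regrouping by Lemmas \ref{pp.10} and \ref{le.1}, and annihilating the regrouped terms with (\ref{eq.x4}) and (\ref{EQ.12p}). No gaps; this is essentially the same proof.
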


 Now, we study   $h$-projectively flat Finsler manifolds\vspace{-0.2cm}
\begin{prop} Under the  projective
change \emph{(\ref{GG})}, if $(M,L)$; $  \dim M>2$,  is
$h$-projectively flat, then its  Weyl torsion tensor vanishes .
\end{prop}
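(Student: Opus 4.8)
The plan is to unwind the hypothesis ``$h$-projectively flat'' into a statement about the invariant Weyl curvature tensor $W$, and then invoke the equivalence of Corollary~\ref{1tt.1}. First I would recall that $h$-projectively flat means there is a projective change $L\To\widetilde{L}$ such that the $h$-curvature $\widetilde{R^{\circ}}$ of the associated Berwald connection vanishes identically. Since $W$ is built out of $R^{\circ}$ together with $R_1$ and its vertical covariant derivatives (Theorem~\ref{weyl}), and all of $R_1$, $\stackrel{2}{D^{\circ}}R_1$, $\stackrel{2}{D^{\circ}}\stackrel{2}{D^{\circ}}R_1$ are themselves constructed from $R^{\circ}$ (via $R_2$, the contracted traces in (\ref{R1})), the vanishing of $\widetilde{R^{\circ}}$ forces the vanishing of $\widetilde{R}_2$, hence of $\widetilde{R}_1$, hence of every term in the formula defining $\widetilde{W}$. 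Therefore $\widetilde{W}=0$.

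The next step is to use the projective invariance of $W$ established in Theorem~\ref{weyl}: since $W=\widetilde{W}$ under the projective change in question, and $\widetilde{W}=0$, we conclude $W=0$ on $\pi^{-1}(TM)$. Now by Corollary~\ref{1tt.1} (or more directly by the corollary immediately following it, asserting the equivalence of the vanishing of $W$, $W_2$ and $W_1$), the vanishing of the Weyl curvature tensor $W$ is equivalent to the vanishing of the Weyl torsion tensor $W_2$. Hence $W_2=0$, which is exactly the assertion.

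The only point requiring a little care — and the step I expect to be the main obstacle — is justifying rigorously that $\widetilde{R^{\circ}}=0$ really does kill the \emph{entire} expression defining $\widetilde{W}$, rather than just its leading term $R^{\circ}$. This comes down to observing that $R_1$ is defined purely by the contractions $R_2(\overline{X},\overline{Y})=Tr^{c}_{\,\,\,\overline{Z}}\{R^{\circ}(\overline{X},\overline{Z})\overline{Y}\}$, so $\widetilde{R^{\circ}}=0$ immediately gives $\widetilde{R}_2=0$ and thus $\widetilde{R}_1=0$; and since the vertical covariant derivative is linear, $\stackrel{2}{\widetilde{D}^{\circ}}\widetilde{R}_1=0$ and $\stackrel{2}{\widetilde{D}^{\circ}}\stackrel{2}{\widetilde{D}^{\circ}}\widetilde{R}_1=0$ as well. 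Substituting all of these into the defining formula of $\widetilde{W}$ (with every occurrence of $R^{\circ}$, $R_1$ replaced by its tilded counterpart, built from $\widetilde{D^{\circ}}$) shows $\widetilde{W}=0$ term by term. One should note that $\dim M>2$ is needed throughout so that $R_1$ and hence $W$, $W_2$ are well defined (cf. (\ref{R1}) and (\ref{k})), which matches the hypothesis $\dim M>2$ in the statement. With these observations the proof is essentially two lines: $\widetilde{R^{\circ}}=0\Rightarrow\widetilde{W}=0\Rightarrow W=0\Rightarrow W_2=0$.
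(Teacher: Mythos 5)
Your proposal is correct and follows essentially the paper's own argument: from $\widetilde{R^{\circ}}=0$ you deduce $\widetilde{R}_{2}=\widetilde{R}_{1}=0$ and then conclude by the projective invariance of a Weyl tensor. The only difference is that the paper applies this directly to the Weyl torsion tensor $W_{2}$ via Theorem \ref{weyl1}(b), getting $\widetilde{W}_{2}=0$ and hence $W_{2}=0$ at once, whereas you take a harmless detour through the Weyl curvature tensor $W$ of Theorem \ref{weyl} and then invoke the equivalence of the vanishing of $W$ and $W_{2}$.
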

\begin{proof}Since $(M,L)$ is  $h$-projectively flat,
then, by Definition \ref{hv.h}, $\widetilde{R^{\circ}}$ vanishes and
hence $\widetilde{R}_{2}=\widetilde{R}_{1}=0$. Consequently, by
Theorem \ref{weyl1}(b), $\widetilde{W}_{2}$ vanishes. As the Weyl
torsion tensor $W_{2}$ is invariant, then $W_{2}$ also vanishes.
\end{proof}

The converse is also true under a certain condition: Consider a
projective change for which the projective factor $\lambda$ (or
$\alpha$) satisfies the condition
\begin{equation}\label{EQ.7}
Q(\overline{X})=\frac{1}{(n+1)}R_{1}(\overline{X}).
\end{equation}
This condition implies that
\begin{equation}\label{EQ.10}
\varepsilon(\overline{X},\overline{Y})=\frac{1}{(n+1)}\{(D^{\circ}_{\gamma
\overline{X}}R_{1})(\overline{Y})-({D}^{\circ}_{\gamma
\overline{Y}}{R}_{1})(\overline{X})\}.
\end{equation}

On the other hand, if  the Weyl torsion tensor  $W_{2}$ vanishes,
then, by Theorem \ref{weyl1}(b), we obtain
$${\widehat{R}^{\circ}}(\overline{X},\overline{Y})=-
   \frac{1}{n+1} \mathfrak{U}_{\overline{X},\overline{Y}}\set{R_{1}(\overline{Y})\overline{X}+
   (D^{\circ}_{\gamma \overline{X}}R_{1})(\overline{Y})\overline{\eta}}.$$
    From which,
together with (\ref{EQ.1}), we have
\begin{eqnarray*}
 {\widetilde{\widehat{R}^{\circ}}}(\overline{X},\overline{Y})&=&
  -\frac{1}{n+1} \mathfrak{U}_{\overline{X},\overline{Y}}\set{R_{1}(\overline{Y})\overline{X}+
   (D^{\circ}_{\gamma \overline{X}}R_{1})(\overline{Y})\overline{\eta}} \\
   &&+ Q(\overline{Y})\overline{X}
   -Q(\overline{X})\overline{Y}+\varepsilon(\overline{X},\overline{Y})\overline{\eta}\\
   &=&\{Q(\overline{Y})-\frac{1}{(n+1)}R_{1}(\overline{Y})\}\overline{X}-
   \{Q(\overline{X})-\frac{1}{(n+1)}R_{1}(\overline{X})\}\overline{Y}\\
   &&+\{ \varepsilon(\overline{X},\overline{Y})-\frac{1}{(n+1)}\{(D^{\circ}_{\gamma
\overline{X}}R_{1})(\overline{Y})-({D}^{\circ}_{\gamma
\overline{Y}}{R}_{1})(\overline{X})\}\}\overline{\eta}.
\end{eqnarray*}
\par
Now, using (\ref{EQ.7}) and (\ref{EQ.10}),
${\widetilde{\widehat{R}^{\circ}}}$ vanishes. Consequently,
${\widetilde{R^{\circ}}}$ vanishes, by (\ref{eq.x2}).
 Hence, we have
\begin{thm}\label{tth.3} Under a  projective change satisfying
\emph{(\ref{EQ.7})}, a Finsler manifold $(M,L)$; $  \dim M>2$, is
$h$-projectively flat if, and only if, its Wyel torsion tensor
vanishes.
\end{thm}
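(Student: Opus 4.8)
\textbf{Proof proposal for Theorem \ref{tth.3}.}
The plan is to prove both implications for a projective change subject to the extra condition \eqref{EQ.7}. One direction is already in hand: the Proposition immediately preceding this theorem shows that $h$-projective flatness forces $\widetilde{R^{\circ}}=0$, hence $\widetilde{R}_2=\widetilde{R}_1=0$, and then Theorem \ref{weyl1}(b) gives $\widetilde{W}_2=0$; since $W_2$ is projectively invariant (Theorem \ref{weyl1}(b)), $W_2=0$ as well. This part needs no use of \eqref{EQ.7}. So the real content is the converse, and that is exactly what the paragraph preceding the theorem statement carries out; I would simply organize that computation into a clean proof.

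First I would record the two consequences of the hypothesis \eqref{EQ.7}. From the definition \eqref{Q and E} of $\varepsilon$ in terms of $Q$, applying $D^{\circ}_{\gamma\overline{X}}$ to the identity $Q(\overline{X})=\tfrac{1}{n+1}R_1(\overline{X})$ and antisymmetrizing yields \eqref{EQ.10}, namely $\varepsilon(\overline{X},\overline{Y})=\tfrac{1}{n+1}\{(D^{\circ}_{\gamma\overline{X}}R_1)(\overline{Y})-(D^{\circ}_{\gamma\overline{Y}}R_1)(\overline{X})\}$. Next, assuming $W_2=0$, Theorem \ref{weyl1}(b) lets me solve for ${\widehat R^{\circ}}$ in terms of $R_1$ and its vertical covariant derivative:
$${\widehat R^{\circ}}(\overline{X},\overline{Y})=-\frac{1}{n+1}\mathfrak{U}_{\overline{X},\overline{Y}}\{R_1(\overline{Y})\overline{X}+(D^{\circ}_{\gamma\overline{X}}R_1)(\overline{Y})\overline{\eta}\}.$$

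Then I would substitute this expression into Corollary \ref{cor.1}(a) (i.e.\ Equation \eqref{EQ.1}), which relates $\widetilde{\widehat R^{\circ}}$ to $\widehat R^{\circ}$, $Q$ and $\varepsilon$. Collecting the $\overline{X}$-, $\overline{Y}$- and $\overline{\eta}$-coefficients, the $\overline{X}$-coefficient is $Q(\overline{Y})-\tfrac{1}{n+1}R_1(\overline{Y})$, the $\overline{Y}$-coefficient is $-(Q(\overline{X})-\tfrac{1}{n+1}R_1(\overline{X}))$, and the $\overline{\eta}$-coefficient is $\varepsilon(\overline{X},\overline{Y})-\tfrac{1}{n+1}\{(D^{\circ}_{\gamma\overline{X}}R_1)(\overline{Y})-(D^{\circ}_{\gamma\overline{Y}}R_1)(\overline{X})\}$. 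By \eqref{EQ.7} the first two coefficients vanish, and by \eqref{EQ.10} the third vanishes, so $\widetilde{\widehat R^{\circ}}=0$. Finally, since the Berwald connection $\widetilde{D^{\circ}}$ is a regular connection with vanishing mixed $(h)hv$-torsion, the identity \eqref{eq.x2} applied to $\widetilde{D^{\circ}}$ gives $\widetilde{R^{\circ}}(\overline{X},\overline{Y})\overline{Z}=(\widetilde{D}^{\circ}_{\gamma\overline{Z}}\widetilde{\widehat R^{\circ}})(\overline{X},\overline{Y})=0$, i.e.\ $(M,L)$ is $h$-projectively flat. I do not foresee a serious obstacle here; the only point requiring a little care is the bookkeeping of the cyclic/antisymmetric sums when substituting ${\widehat R^{\circ}}$ into \eqref{EQ.1}, and making sure the homogeneity of $\varepsilon$ and $R_1$ (Proposition \ref{p.homog.}) is invoked correctly when a covariant derivative hits $\overline{\eta}$.
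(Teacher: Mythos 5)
Your proposal is correct and follows essentially the same route as the paper: the forward direction is the preceding proposition (no use of (\ref{EQ.7})), and the converse is exactly the paper's computation — derive (\ref{EQ.10}) from (\ref{EQ.7}) via the definition (\ref{Q and E}) of $\varepsilon$, solve Theorem \ref{weyl1}(b) for $\widehat{R}^{\circ}$ when $W_2=0$, substitute into (\ref{EQ.1}) to get $\widetilde{\widehat{R}^{\circ}}=0$, and conclude $\widetilde{R^{\circ}}=0$ by (\ref{eq.x2}).
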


Combining Theorem \ref{tth.1} and  Theorem \ref{tth.3}, we retrieve
intrinsically a result of Matsumoto \cite{r105}. Namely, we have
\begin{thm}\label{th.q} Under a  projective change satisfying
\emph{(\ref{eq.x4})} and \emph{(\ref{EQ.7})}, a Finsler manifold
$(M,L)$; $  \dim M>2$, is projectively flat if, and only if, its
Wyel torsion tensor
 and  Douglas tensor vanish.
\end{thm}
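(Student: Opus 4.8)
The plan is to combine the two characterizations established immediately before, namely Theorem~\ref{tth.1} and Theorem~\ref{tth.3}, since the hypothesis here includes \emph{both} side conditions \eqref{eq.x4} and \eqref{EQ.7}. Recall that by Definition~\ref{hv.h} a Finsler manifold $(M,L)$ is projectively flat (under the given projective change) precisely when \emph{both} $\widetilde{P^{\circ}}$ and $\widetilde{R^{\circ}}$ vanish. So the statement to be proved is an ``if and only if'' whose left side splits as a conjunction of two conditions, and whose right side is also a conjunction of two conditions (Douglas tensor $\mathbb{P}=0$ and Weyl torsion tensor $W_{2}=0$). The natural strategy is therefore to prove the biconditional componentwise.

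First I would invoke Theorem~\ref{tth.1}: under \eqref{eq.x4}, $\widetilde{P^{\circ}}=0$ (i.e.\ $(M,L)$ is $hv$-projectively flat) if and only if $\mathbb{P}=0$ (i.e.\ $(M,L)$ is a Douglas manifold). Next I would invoke Theorem~\ref{tth.3}: under \eqref{EQ.7}, $\widetilde{R^{\circ}}=0$ (i.e.\ $(M,L)$ is $h$-projectively flat) if and only if $W_{2}=0$ (the Weyl torsion tensor vanishes). Since the hypothesis of the present theorem assumes both \eqref{eq.x4} and \eqref{EQ.7}, both equivalences are available simultaneously. Conjoining them gives: $\widetilde{P^{\circ}}=0$ and $\widetilde{R^{\circ}}=0$ if and only if $\mathbb{P}=0$ and $W_{2}=0$. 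By Definition~\ref{hv.h} the left-hand conjunction is exactly the statement that $(M,L)$ is projectively flat, which is what we want. The dimension restriction $\dim M>2$ is needed only because Theorem~\ref{tth.3} (and the construction of $W_{2}$ in Theorem~\ref{weyl1}) requires $n>2$; it is harmless to carry it along.

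There is essentially no genuine obstacle here: the theorem is a formal conjunction of the two preceding results, and the proof is one or two lines. The only point demanding a small amount of care is to make sure that the two side conditions \eqref{eq.x4} and \eqref{EQ.7} are mutually compatible and that nothing in the derivation of Theorem~\ref{tth.1} clashes with that of Theorem~\ref{tth.3} when both are imposed --- but since each is simply a constraint on the projective factor $\lambda$ (equivalently on $\alpha$), and we are merely positing a projective change for which both hold, there is no conflict. Thus the write-up would read: ``By Definition~\ref{hv.h}, $(M,L)$ is projectively flat iff both $\widetilde{P^{\circ}}$ and $\widetilde{R^{\circ}}$ vanish. Under \eqref{eq.x4}, Theorem~\ref{tth.1} gives that $\widetilde{P^{\circ}}=0$ iff $\mathbb{P}=0$; under \eqref{EQ.7}, Theorem~\ref{tth.3} gives that $\widetilde{R^{\circ}}=0$ iff $W_{2}=0$. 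Combining these two equivalences yields the claim.''
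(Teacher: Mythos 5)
Your proposal is correct and matches the paper's own argument: the paper states Theorem \ref{th.q} precisely as the combination of Theorem \ref{tth.1} and Theorem \ref{tth.3}, read against Definition \ref{hv.h}, exactly as you do componentwise. Nothing further is needed.
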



\providecommand{\bysame}{\leavevmode\hbox to3em{\hrulefill}\thinspace}
\providecommand{\MR}{\relax\ifhmode\unskip\space\fi MR }
\providecommand{\MRhref}[2]{%
  \href{http://www.ams.org/mathscinet-getitem?mr=#1}{#2}
}
\providecommand{\href}[2]{#2}

\end{document}